\theoremstyle{plain}
\newtheorem{theorem}{Theorem}
\newtheorem{definition}[theorem]{Definition}
\newtheorem{lemma}[theorem]{Lemma}
\newtheorem{proposition}[theorem]{Proposition}
\newtheorem{corollary}[theorem]{Corollary}
\numberwithin{theorem}{section}
\numberwithin{equation}{section}
\newcommand{\R}{{\mathbb{R}}}
\newcommand{\rplus}{{\mathbb{R}^{+}}}
\newcommand{\C}{\mathbb{C}}
\newcommand{\cplus}{\mathbb{C}^{+}}
\newcommand{\N}{\mathbb{N}}
\newcommand{\Ha}[0]{\ensuremath{\mathcal{H}}}
\newcommand{\dif}[0]{\ensuremath{\,\mathrm{d}}}
\renewcommand{\Re}{{\rm Re\,}}
\newcommand{\TF}{{\leftarrow}}
\newcommand{\Xscr} {{\mathcal X}}
\newcommand{\Yscr} {{\mathcal Y}}
\newcommand{\Uscr} {{\mathcal U}}
\newcommand{\Zscr} {{\mathcal Z}}
\newcommand{\ipd}[2]{\langle #1, #2 \rangle}
\newcommand{\bbm}[1]{\left[\begin{matrix} #1 \end{matrix}\right]}
\newcommand{\sbm}[1]{\left[\begin{smallmatrix} #1
             \end{smallmatrix}\right]}
\newcommand{\DisplayNote}[1]{\hspace{.5cm}( {\bf{#1}} ) \hspace{.5cm} }
\newcommand{\OmitThis}[1]{}
\newcommand{\ResearchNote}[1]{}
\newcommand{\ProofNote}[1]{}
\newcommand{\closure}[1]{\overline{#1}}
\newcommand{\abs}[1]{\left \vert #1 \right \vert}
\newcommand{\Dom}[1]{{\rm dom}\left (#1 \right )}
\newcommand{\Null}[1]{{\rm ker} \left (#1 \right )}
\newcommand{\BLO}{\mathcal L}
\newcommand{\norm}[1]{\|{#1}\|}
\newcommand{\rst}[1]{\big{|}_{#1}}
\newcommand{\bg}{\mathbf \gamma}
\newcommand{\bnu}{\mathbf \nu}
\newcommand{\br}{\mathbf r}
\begin{document}

\title{Acoustic wave guides as \\ infinite-dimensional dynamical systems} 
\author{Atte Aalto, Teemu Lukkari, and Jarmo Malinen}

\date{\today}

\maketitle
 
\begin{abstract}   
 We prove the unique solvability, passivity/conservativity and some
 regularity results of two mathematical models for acoustic wave
 propagation in curved, variable diameter tubular structures of finite
 length.  The first of the models is the generalised Webster's model
 that includes dissipation and curvature of the 1D waveguide. The
 second model is the scattering passive, boundary controlled wave
 equation on 3D waveguides. The two models are treated in an unified
 fashion so that the results on the wave equation reduce to the
 corresponding results of approximating Webster's model at the limit
 of vanishing waveguide intersection.
\end{abstract}

\noindent {\bf Keywords.} Wave propagation, tubular domain, wave
equation, Webster's horn model, passivity, regularity.

\vspace{0.3cm}
\noindent {\bf AMS classification.} Primary 35L05, secondary 35L20, 93C20, 47N70.


\bibliographystyle{plain}


\section{\label{IntroSec} Introduction}

This is the second part of the three part mathematical study on
acoustic wave propagation in a narrow, tubular 3D domain $\Omega
\subset \R^3$. The other parts of the work are
\cite{L-M:PEEWEWP,L-M:WECAD}. Our current interest in wave guide
dynamics stems from modelling of acoustics of speech production; see,
e.g., \cite{A-A-M-V:MLBVFVTO,A-H-K-M-P-S-V:HFAVFFCVTR,H-L-M-P:WFWE}
and the references therein.

The main purpose of the present paper is to give a rigorous treatment
of solvability and energy passivity/conservativity questions of the
two models for wave propagations that are discussed in detail in
\cite{L-M:WECAD}: these are {\rm (i)} the boundary controlled wave
equation on a tubular domain, and {\rm (ii)} the generalised Webster's
horn model that approximates the wave equation in low frequencies. The
\emph{a~posteriori} error estimate for the Webster's model is
ultimately given in \cite{L-M:PEEWEWP}, and it is in an essential part
based on Theorems~\ref{WebsterNodeThm} and \ref{nDimWaveConsThm}
below.

The secondary purpose of this paper is to introduce the new notion of
\emph{conservative majoration} for passive boundary control
systems. The underlying systems theory idea is simple and easy to
explain: it is to be expected on engineering and physical grounds that
adding energy dissipation to a forward time solvable (i.e., internally
well-posed, typically even conservative) system cannot make the system
ill-posed, e.g., unsolvable in forward time direction. Thus, it should
be enough to treat mathematically only the lossless conservative case
that ``majorates'' all models where dissipation is included as far as
we are not reversing the arrow of time.  That this intuition holds
true for many types of energy dissipation is proved in
Theorem~\ref{FirstConservativeMajorationThm} for boundary dissipation
and in Theorem~\ref{SecondConservativeMajorationThm} for a class of
dissipation terms for PDE's. These theorems are given in the general
context of \emph{boundary nodes} that have been discussed in, e.g.,
\cite{M-S:CBCS,M-S:IPCBCS,OS:WPLS}.

Early work concerning Webster's equation can be found in
\cite{EE:CSWHE,VS:GPWHT,VS:NFH,AW:AITHP}. Webster's original work
\cite{AW:AITHP} was published in 1919, but the model itself has a
longer history spanning over 200 years and starting from the works of
D.~Bernoulli, Euler, and Lagrange. More modern approaches is provided
by
\cite{L-L:AMAEMAI,L-L:AMAEMAII,N-T:APDVCS,SR:STSVCALDF,R-E:NCBMSFESSPLFD,SR:WHER}. Webster's
horn model is a special case of the wave equation in a non-homogenous
medium in $\Omega \subset \R^n$, $n \geq 1$, which has been treated
with various boundary and interior point control actions in, e.g.,
\cite[Appendix~2]{F-L-T:ARE}, \cite[Section~2]{L-L-T:NBVPSOHO},
\cite{JLL:ECSPDS}, \cite[Section~6]{DLR:CSTLPDERPOQ}, and, in
particular, \cite[Section~7]{L-T:CTPDECATII} containing also
historical remarks. There exists a rich literature on the damped wave
equation in 1D spatial domain, and instead of trying to give here a
comprehensive account we refer to the numerous references given
\cite{G-H:DSPE}.

The boundary of $\Omega \subset \R^n$, $n \geq 2$, is smooth or $C^2$
in the works cited above, which excludes polygons (for $n = 2$) or
their higher dimensional counterparts such as the tubular structures
discussed here.  From systems theory point of view, this is a serious
restriction since it is obviously impossible to connect finitely many,
disjoint, smooth domains seamlessly to each other without leaving
holes whose interior is non-empty.  The generality of this article
makes it possible to interconnect 3D wave equation systems on
geometrically compatible elements $\Omega_j \subset \R^3$ to form
aggregated systems on $\cup_j \Omega_j$ in the same way as described
in \cite[Section~5]{A-M:CPBCS} for Webster's horn model.
 \begin{figure}
 \centering
 \def\svgwidth{0.45\textwidth}
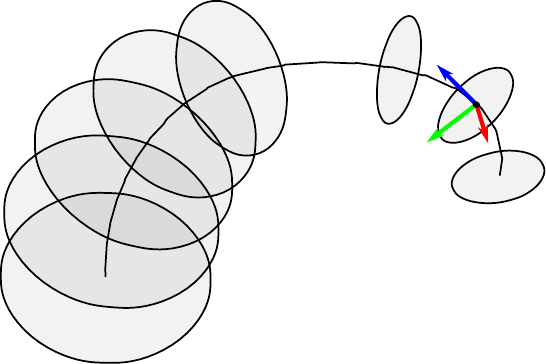 
 \caption{The Frenet frame of the planar centreline for a tubular
   domain $\Omega$, represented by some of its intersection surfaces
   $\Gamma(s)$ for $s \in [0,1]$. The wall $\Gamma \subset \partial \Omega$ is not
   shown, and the global coordinate system is detailed in
   \cite[Section~2]{L-M:WECAD}. }\label{centrelines}
 \end{figure}

Theorems~\ref{WebsterNodeThm} and \ref{nDimWaveConsThm} treat the
questions of unique solvability, passivity, and regularity of the two
wave propagation models in the exactly same form as these results are
required in companion papers \cite{L-M:PEEWEWP,L-M:WECAD}.  The strict
passivity (i.e., the case $\alpha > 0$) in
Theorems~\ref{WebsterNodeThm} and \ref{nDimWaveConsThm} could be
proved without resorting to
Theorems~\ref{FirstConservativeMajorationThm} and
\ref{SecondConservativeMajorationThm} as they both concern single PDE's
with simple dissipation models. However, the direct approach becomes
technically quite cumbersome if we have more complicated aggregated
systems to treat (not all of which need be defined by PDE's), and
combinations of various dissipation models are involved. An example of
such systems is provided by \emph{transmission graphs} as introduced
in \cite{A-M:CPBCS}
where the general passive case is treated by reducing it to the
conservative case and arguing as in
Theorem~\ref{SecondConservativeMajorationThm}. In the context of
transmission graphs, see also the literature on port-Hamiltonian
systems \cite{C-S-N:IPHSCDS,K-Z-S-B:DSTCHS,JV:PHADPS}.  That the
conservative majoration method cannot be used for all possible
dissipation terms is shown in Section~\ref{ConclSec} by an example
involving Kelvin--Voigt structural damping.

 Let us return to wave propagation models on a tubular domain $\Omega$
 referring to Fig.~\ref{centrelines}.  The cross sections $\Gamma(s)$
 of $\Omega$ are normal to the planar curve $\bg = \bg(s)$ that serves
 as the centreline of $\Omega$ as shown in Fig.~\ref{centrelines}.  We
 denote by $R(s)$ and $A(s) := \pi R(s)^2$ the radius and the area of
 $\Gamma(s)$, respectively.  We call $\Gamma$ the \emph{wall}, and the
 circular plates $\Gamma(0)$, $\Gamma(1)$ the \emph{ends} of the tube
 $\Omega$. The boundary of $\Omega$ satisfies $\partial \Omega =
 \closure{\Gamma} \cup \Gamma(0) \cup \Gamma(1)$. Without loss of
 generality, the parameter $s \geq 0$ can be regarded as the arc
 length of $\bg$, measured from the control/observation surface
 $\Gamma(0)$ of the tube.

As is well known, acoustic wave propagation in $\Omega$ can be
modelled by the wave equation for the velocity potential $\phi$  as
\begin{equation}\label{IntroWaveEq}
\begin{cases}
  &  \phi_{tt}(\br, t) = c^2 \Delta \phi(\br, t) \quad
  \text{ for } \br \in \Omega \text{ and } t \in \rplus, \\
  &   c \frac{\partial \phi}{\partial
    \bnu}(\br, t) + \phi_t(\br, t) = 2 \sqrt{\tfrac{c}{\rho A(0)}} \, u(\br, t) \quad
  \text{ for } \br \in \Gamma(0) \text{ and }  t \in \rplus, \\
  &  \phi(\br, t)  = 0 \quad \text{ for } \br \in \Gamma(1) \text{ and }
  t \in \rplus, \\
  & \alpha \frac{\partial \phi}{\partial t}(\br, t) 
  + \frac{\partial \phi}{\partial \bnu}(\br, t)  = 0 \quad \text{ for } \br
  \in \Gamma, \text{ and }
  t \in \rplus, \text{ and } \\
  & \phi(\br,0)  = \phi_0(\br), \quad \rho \phi_t(\br,0) = p_0(\br) \quad \text{ for
  } \br \in \Omega
\end{cases}
\end{equation}
with the observation defined by
\begin{equation}\label{IntroWaveObs}
  c \frac{\partial \phi}{\partial
    \bnu} (\br, t) - \phi_t(\br, t)  =  2 \sqrt{\tfrac{c}{\rho A(0)}} \, 
  y(\br, t)  \quad
  \text{ for }  \br \in \Gamma(0) \text{ and }  t \in \rplus,
\end{equation}
where $\bnu$ denotes the unit normal vector on $\partial \Omega$, $c$
is the sound speed, $\rho$ is the density of the medium, and $\alpha
\geq 0$ is a parameter associated to boundary dissipation. The
functions $u$ and $y$ are control and observation signals in
\emph{scattering form}, and the normalisation constant $2
\sqrt{\tfrac{c}{\rho A(0)}}$ takes care of their physical dimension
which is power per area. Solvability, stability, and energy questions
for the wave equation in various geometrical domains $\Omega \subset
\R^n$ have a huge literature, and it is not possible to give a
historically accurate review here. The wave equation is a prototypal
example of a linear hyperbolic PDE whose classical mathematical
treatment can be found, e.g., in \cite[Chapter~5]{L-M:NHBVPAPII}, and
the underlying physics is explained well in
\cite[Chapter~9]{F-W:TMPAC}. In the operator and mathematical system
theory context, it has been given as an example (in various
variations) in
\cite{JM:CTFIBCS,M-S:IPCBCS,RT:WEBDBDOA,W-T:HTGCWPLSOOTAII,W-T:HTGCWPLSOOTA}
and elsewhere. For applications in speech research, see, e.g.,
\cite{A-H-K-M-P-S-V:HFAVFFCVTR,H-L-M-P:WFWE,L-M:WECAD} and the
references therein.

One computationally and analytically simpler wave propagation model is
the \emph{generalised Webster's horn model} for the same tubular
domain $\Omega$ that is now represented by the \emph{area function}
$A(\cdot)$ introduced above.
To review this model in its generalised form, let us recall some
notions from \cite{L-M:WECAD}. To take into account the curvature
$\kappa(s)$ of the centreline $\bg(\cdot)$ of $\Omega$, we adjust the
sound speed $c$ in \eqref{IntroWaveEq} by defining $c(s) := c
\Sigma(s)$ where $\Sigma(s) := \left ( 1 + \tfrac{1}{4} \eta(s)^2
\right )^{-1/2}$ is the \emph{sound speed correction factor}, and
$\eta(s) := R(s) \kappa(s)$ is the \emph{curvature ratio} at $s \in
[0,1]$.  We also need take into consideration the deformation
of the outer wall $\Gamma$ by defining the \emph{stretching
  factor} $W(s) := R(s)\sqrt{R'(s)^2+(\eta(s) - 1)^2}$; see
\cite[Eq.~(2.8)]{L-M:WECAD}.  It is a standing assumption that
$\eta(s) < 1$ to prevent the tube $\Omega$ from folding on
itself locally.

Following \cite{L-M:WECAD}, the generalised Webster's horn model for
the velocity potential $\psi = \psi(s,t)$ is now given by
\begin{equation} \label{IntroWebstersEq}
  \begin{cases}
    & \psi_{tt} = \frac{c(s)^{2}}{A(s)} \frac{\partial}{\partial s}
    \left ( A(s) \frac{\partial \psi }{\partial s} \right ) 
    - \frac{2 \pi \alpha W(s) c(s)^{2} }{A(s)} \frac{\partial \psi }{\partial t}
\\   & \hfill \text{ for }   s \in (0,1) \text{ and } t \in \rplus, \\
    & - c \psi_s(0,t) + \psi_t(0,t) = 2 \sqrt{\frac{c}{\rho
        A(0)}} \, \tilde u(t) \quad
    \text{ for } t \in \rplus,  \\
    & \psi(1,t) = 0 \quad  \text{ for } t \in \rplus, \quad \text{ and } \\
    & \psi(s,0) = \psi_0(s), \quad \rho \psi_t(s,0) = \pi_0(s) \quad
    \text{ for } s \in (0,1),
\end{cases}
\end{equation}
and the observation $\tilde y$ is defined by
\begin{equation}\label{IntroWebstersEqObs}
   - c \psi_s(0,t) - \psi_t(0,t) = 2 \sqrt{\frac{c}{\rho
        A(0)}} \, \tilde y(t) \quad
    \text{ for } t \in \rplus.
\end{equation}
The constants $c$, $\rho$, $\alpha$ are same as in
\eqref{IntroWaveEq}. The input and output signals $\tilde u$ and
$\tilde y$ of \eqref{IntroWebstersEq}--\eqref{IntroWebstersEqObs}
correspond to $u$ and $y$ in \eqref{IntroWaveEq}--\eqref{IntroWaveObs}
by spatial averaging over the control surface $\Gamma(0)$.  Hence,
their physical dimension is power per area as well.  Based on
\cite{L-M:PEEWEWP,L-M:WECAD}, the solution $\psi$ of
\eqref{IntroWebstersEq} approximates the averages
\begin{equation} \label{AveragedWaveEqSol}
 \bar \phi(s,t) : = \frac{1}{A(s)} {\int_{ \Gamma(s) }{\phi \dif A}} \quad \text{ for } \quad 
s \in (0,1) \quad \text{ and }  \quad  t \geq 0 
\end{equation}
of $\phi$ in \eqref{IntroWaveEq} when $\phi$ is regular enough.  Note
that the dissipative boundary condition $\alpha \frac{\partial
  \phi}{\partial \bnu}(\br, t) + \frac{\partial \phi}{\partial
  \bnu}(\br, t) = 0$ in \eqref{IntroWaveEq} has been replaced by the
dissipation term $2 \pi \alpha W(s)  A(s)^{-1} c(s)^{2}
\frac{\partial \psi }{\partial t}$ (with the same parameter $\alpha$)
in \eqref{IntroWebstersEq}. For classical work on Webster's horn
model, see \cite{L-L:AMAEMAI,N-T:APDVCS,VS:GPWHT} and in particular
\cite{SR:WHER} where numerous references can be found.

We show in Theorem~\ref{nDimWaveConsThm} that the wave equation model
\eqref{IntroWaveEq}--\eqref{IntroWaveObs} is uniquely solvable in both
directions of time, and the solution satisfies an energy inequality if
$\alpha > 0$.  By Corollary~\ref{nDimWaveConsThmCor}, the model has
the same properties for $\alpha = 0$ but then the energy inequality is
replaced by an equality, and the model is even time-flow
invertible. In all cases, the solution $\phi$ is observed to have the
regularity required for the treatment given in \cite{L-M:WECAD} if the
input $u$ is twice continuously differentiable. The generalised
Webster's horn model
\eqref{IntroWebstersEq}--\eqref{IntroWebstersEqObs} is treated in a
similar manner in Theorem~\ref{WebsterNodeThm}.

This paper is organised as follows: Background on boundary control
systems is given in Section~\ref{BackgroundSection}.  Conservative
majoration of passive boundary control systems is treated in
Section~\ref{MajorationSection}.  The Webster's horn model and the
wave equation are treated in Sections~\ref{WebsterSec} and
\ref{WaveSection} respectively.  Some immediate extensions of these
results are given in Section~\ref{ConclSec}.  Because of the lack of
accessible, complete, and sufficiently general references, the paper
is completed by a self-contained appendix on Sobolev spaces, boundary
trace operators, Green's identity, and Poincar\'e inequality for
special Lipschitz domains that are required in the rigorous analysis
of typical wave guide geometries.

\section{\label{BackgroundSection} On infinite dimensional systems}

Linear boundary control systems such as \eqref{IntroWaveEq} and
\eqref{IntroWebstersEq} are treated as dynamical systems that can be
described by operator differential equations of the form
\begin{equation} \label{BckGrndDynEq}
    u(t) = Gz(t), \quad \dot z(t) = L z(t), \quad 
    \text{ with the initial condition } \quad z(0) = z_0
\end{equation}
and the observation equation
\begin{equation}
  \label{BckGrndDynEqObs}
  y(t) = Kz(t), 
\end{equation}
where $t \in \rplus$ denotes time.  The signals in
\eqref{BckGrndDynEq}, \eqref{BckGrndDynEqObs} are as follows: $u$ is
the input, $y$ is the output, and the state trajectory is $z$.

\subsubsection*{Cauchy problems}
To make \eqref{BckGrndDynEq} properly solvable for all twice
differentiable $u$ and compatible initial states $z_0$, the axioms of
an \emph{internally well-posed boundary node} should be satisfied:

\begin{definition} \label{BoundaryNodeDef} A triple of operators $\Xi = (G,L,K)$
  is an \emph{internally well-posed boundary node} on the Hilbert
  spaces $(\Uscr,\Xscr,\Yscr)$ if the following conditions are
  satisfied:

\begin{enumerate}
  
\item  $G$, $L$, and $K$ are linear operators with the same domain
  $\Zscr \subset \Xscr$;
 
\item \label{BoundaryNodeDefRequirement2} $\sbm{G \\ L \\ K}$ is a closed operator from
 $\Xscr$  into $\Uscr \times \Xscr \times \Yscr$ with domain $\Zscr$;

\item $G$ is surjective, and $\Null{G}$ is dense in $\Xscr$; and

\item \label{IntWellPosedBNDNode} $L\rst{\Null{G}}$ (understood as an unbounded operator in $\Xscr$
  with domain $\Null{G}$) generates a strongly continuous semigroup on
  $\Xscr$.
\end{enumerate}
If, in addition, $L$ is a closed operator on $\Xscr$ with domain
$\Zscr$, we say that the boundary node $\Xi$ is \emph{strong}.
\end{definition}
The history of abstract boundary control system dates back to
\cite{HOF:BCS,DS:IDL,DS:RTH}. The phrase ``internally well-posed''
refers to condition \eqref{IntWellPosedBNDNode} of
Definition~\ref{BoundaryNodeDef}, and it is a much weaker property
than well-posedness of systems in the sense of \cite{OS:WPLS}.  It
plainly means that the boundary node defines an evolution equation
that is uniquely solvable in forward time direction.  Boundary nodes
that are not necessarily internally well-posed are characterised by the
weaker requirement in place of \eqref{IntWellPosedBNDNode}: $\alpha -
L\rst{\Null{G}}$ is a bijection from $\Null{G}$ onto $\Xscr$ for some
$\alpha \in \C$.

We call $\Uscr$ the \emph{input space}, $\Xscr$ the \emph{state
  space}, $\Yscr$ the \emph{output space}, $\Zscr$ the \emph{solution
  space}, $G$ the \emph{input boundary operator}, $L$ the
\emph{interior operator}, and $K$ the \emph{output boundary operator}.
The operator $A := L\rst{\Null{G}}$ is called the \emph{semigroup
  generator} if $\Xi$ is internally well-posed, and otherwise it is
known as the \emph{main operator} of $\Xi$.  Because $\bbm{G & L &
  K}^T$ is a closed operator, we can give its domain the Hilbert space
structure by the \emph{graph norm}
\begin{equation} \label{SolutionSpaceNorm}
 \norm{z}_\Zscr^2 =  \norm{z}_\Xscr^2 + \norm{Lz}_\Xscr^2 + \norm{Gz}_\Uscr^2 + \norm{Kz}_\Yscr^2.
\end{equation}
If the node is strong, we have an equivalent norm for $\Zscr$ given by
omitting the last two terms in \eqref{SolutionSpaceNorm}.  If $\Xi =
(G,L,K)$ is an internally well-posed boundary node, then
\eqref{BckGrndDynEq} has a unique ``smooth'' solution:
\begin{proposition} \label{SolvabilityProp} Assume that $\Xi =
  (G,L,K)$ is an internally well-posed boundary node.  For all $z_0
  \in \Xscr$ and $u \in C^2(\rplus;\Uscr)$ with $Gz_0 = u(0)$ the
  equations \eqref{BckGrndDynEq} have a unique solution $z \in
  C^1(\rplus;\Xscr) \cap C(\rplus;\Zscr)$.  Hence, the output $y \in
  C(\rplus;\Yscr)$ is well defined by the equation~\eqref{BckGrndDynEqObs}.
\end{proposition}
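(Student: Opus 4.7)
My plan is to reduce the boundary control problem \eqref{BckGrndDynEq} to an inhomogeneous abstract Cauchy problem with homogeneous boundary data, and then invoke the classical $C_0$-semigroup perturbation theory for the generator $A := L\rst{\Null{G}}$. The reduction proceeds by subtracting a ``lifting'' of the boundary input $u(t)$. To construct this lifting I would first observe that, by condition (ii) of Definition~\ref{BoundaryNodeDef}, the solution space $\Zscr$ equipped with the graph norm \eqref{SolutionSpaceNorm} is a Hilbert space on which $G$, $L$, and $K$ are all bounded. Surjectivity of $G$ (condition (iii)) turns $G : \Zscr \to \Uscr$ into a bounded surjection between Hilbert spaces, so the open mapping theorem supplies a bounded right inverse $E : \Uscr \to \Zscr$ with $GE = I_{\Uscr}$. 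Setting $\tilde z(t) := Eu(t)$ then gives $\tilde z \in C^2(\rplus;\Zscr)$ with $G\tilde z(t) = u(t)$ for every $t \geq 0$.

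Next, I would write $w(t) := z(t) - \tilde z(t)$ and transform the original problem into one for $w$. Since any classical solution $z$ must satisfy $Gz(t) = u(t)$, the remainder $w(t)$ is forced to lie in $\Null{G} = \Dom{A}$, and a direct differentiation gives
\begin{equation*}
 \dot w(t) = Aw(t) + f(t), \qquad w(0) = z_0 - Eu(0),
\end{equation*}
with forcing $f(t) := LE u(t) - E\dot u(t)$. The compatibility hypothesis $Gz_0 = u(0)$ is exactly what places $w(0) \in \Null{G} = \Dom{A}$, while $u \in C^2(\rplus;\Uscr)$ together with boundedness of $LE$ and $E$ into $\Xscr$ yields $f \in C^1(\rplus;\Xscr)$. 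Condition (iv) supplies a $C_0$-semigroup generated by $A$, so the standard inhomogeneous Cauchy theory (Duhamel's formula, together with the $C^1$ smoothing assumption on the forcing) produces a unique classical solution $w \in C^1(\rplus;\Xscr)\cap C(\rplus;\Dom{A})$.

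Setting $z := w + \tilde z$ would then finish the construction. To see $z \in C(\rplus;\Zscr)$ I would note that $\Null{G}$ carries both the generator graph norm and the full graph norm \eqref{SolutionSpaceNorm} as Banach space structures, and the identity between them is a closed map; the closed graph theorem thus gives a continuous embedding $\Dom{A}\hookrightarrow \Zscr$, which together with $\tilde z \in C^2(\rplus;\Zscr)$ delivers the required $\Zscr$-regularity of $z$, while the $\Xscr$-regularity is immediate. Uniqueness follows because the difference of two candidate solutions is a classical semigroup solution of $\dot v = Av$ with $v(0) = 0$, hence zero. Finally $y = Kz \in C(\rplus;\Yscr)$ from continuity of $K : \Zscr \to \Yscr$. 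The main delicate point I anticipate is administrative rather than deep: one must be careful that $E$ is bounded into $\Zscr$ equipped with the \emph{full} graph norm that includes the $K$-term, and that the compatibility condition $Gz_0 = u(0)$ is exactly what places $w(0)$ in $\Dom{A}$ so that Duhamel's formula yields a classical rather than merely mild solution.
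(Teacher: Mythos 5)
Your argument is correct. Note, however, that the paper does not prove this proposition at all: it simply cites \cite[Lemma 2.6]{M-S:CBCS}, where the result is obtained by passing from the boundary node $(G,L,K)$ to the associated system node and invoking the general smooth-solvability theory for system nodes (ultimately \cite{OS:WPLS}). What you give instead is the classical, self-contained lifting argument going back to Fattorini: a bounded right inverse $E$ of $G$ (which exists because $\Zscr$ with the graph norm \eqref{SolutionSpaceNorm} and $\Uscr$ are Hilbert spaces, so one may invert $G$ on $\Null{G}^\perp$), reduction of \eqref{BckGrndDynEq} to the inhomogeneous Cauchy problem $\dot w = Aw + f$ with $f = LEu - E\dot u \in C^1(\rplus;\Xscr)$ and $w(0)\in\Dom{A}$ (this is exactly where $Gz_0=u(0)$ enters), and then Duhamel plus the standard $C^1$-forcing regularity theorem. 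Your two "delicate points" are handled correctly: $E$ is bounded into $\Zscr$ with the full graph norm by construction, and the continuous embedding $\Dom{A}\hookrightarrow\Zscr$ follows from the closed graph theorem since both norms dominate the $\Xscr$-norm on the closed subspace $\Null{G}$; together with $Eu\in C^2(\rplus;\Zscr)$ this yields $z\in C^1(\rplus;\Xscr)\cap C(\rplus;\Zscr)$, uniqueness follows from the homogeneous semigroup problem, and $y=Kz$ is continuous because $K\in\BLO(\Zscr;\Yscr)$. (One cosmetic point: the statement writes $z_0\in\Xscr$, but the condition $Gz_0=u(0)$ implicitly places $z_0\in\Zscr$, which you use tacitly and harmlessly.) The trade-off is the expected one: your route is elementary and self-contained, while the cited route embeds the result into the system-node framework, which the paper then exploits elsewhere (well-posedness, duality, time-flow inversion) without extra work.
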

\noindent Indeed, this is \cite[Lemma 2.6]{M-S:CBCS}.

\subsubsection*{Energy balances}
Now that we have treated the solvability of the dynamical equations,
it remains to consider energy notions. We say that the internally
well-posed boundary node $\Xi = (G,L,K)$ is \emph{(scattering)
  passive} if all smooth solutions of \eqref{BckGrndDynEq} satisfy
\begin{equation}
 \label{ConsCondDiff} \frac{d}{dt} \norm{z(t)}_\Xscr^2
 + \norm{y(t)}_\Yscr^2 \leq \norm{u(t)}_\Uscr^2 \quad \text{ for all } \quad t \in \rplus
\end{equation}
with $y$ given by \eqref{BckGrndDynEqObs}.  All such systems are
well-posed in the sense of \cite{OS:WPLS}; see also \cite{T-W:OCOS}.
We say that $\Xi$ is \emph{(scattering) energy preserving} if
\eqref{ConsCondDiff} holds as an equality.

Many boundary nodes arising from hyperbolic PDE's (such as
\eqref{IntroWaveEq}--\eqref{IntroWaveObs} and
\eqref{IntroWebstersEq}--\eqref{IntroWebstersEqObs}) have the property
that they remain boundary nodes if we {\rm (i)} change the sign of $L$
(i.e., reverse the direction of time); and {\rm (ii)} interchange the
roles of $K$ and $G$ (i.e., reverse the flow direction). Such boundary
nodes are called \emph{time-flow invertible}, and we write $\Xi^\TF =
(K, -L, G)$ for the time-flow inverse of $\Xi$. There are many
equivalent definitions of \emph{conservativity} in the literature, and
we choose here the following:
\begin{definition}\label{ConsNodeDef} An internally well-posed boundary node $\Xi$
is \emph{(scattering) conservative} if it is time-flow invertible, and
both $\Xi$ itself and the time-flow inverse $\Xi^\TF$ are (scattering)
energy preserving.\footnote{The words ``energy preserving'' can be
  replaced by ``passive'' without changing the class of systems one obtains.}
\end{definition}
For system nodes that have been introduced in \cite{OS:WPLS,M-S-W:HTCCS}, an
equivalent definition for conservativity is to require that both $S$
and its \emph{dual node} $S^d$ are energy preserving. This is the
straightforward generalisation from the finite-dimensional theory but
it is not very practical when dealing with boundary control. For
conservative systems, the time-flow inverse and the dual system
coincide, and we have then, in particular, $A^* = - L\rst{\Null{K}}$
if $A = L\rst{\Null{G}}$. For details, see \cite[Theorems~1.7 and
  1.9]{M-S:CBCS}.

It is possible to check economically, without directly using
Definition \ref{BoundaryNodeDef}, that the triple $\Xi = (G,L,K)$ is a
dissipative/conservative boundary node:
\begin{proposition}
\label{CheckingPassivityProp}  Let $\Xi = (G,L,K)$ be a 
triple of linear operators with a common domain $\Zscr \subset \Xscr$,
and ranges in the Hilbert spaces $\Uscr$, $\Xscr$, and $\Yscr$,
respectively.  Then $\Xi$ is a passive boundary node on
$(\Uscr,\Xscr,\Yscr)$ if and only if the following conditions hold:
\begin{enumerate}

\item \label{PassCharCond3} We have the \emph{Green--Lagrange inequality}
  \begin{equation} \label{PassScattIneq}
    2 \Re \ipd{z}{Lz}_\Xscr + \norm{K z}_\Yscr^2 \leq \norm{G z}_\Uscr^2
\quad \text{ for all } \quad z \in \Zscr;
  \end{equation}

\item \label{PassCharCond2} 
$G \Zscr = \Uscr$ and $(\beta - L) \Null{G} = \Xscr$
 for some $\beta \in \cplus$ (hence, for all $\beta \in \cplus$).

\end{enumerate}

Similarly, $\Xi$ is a conservative boundary node on
$(\Uscr,\Xscr,\Yscr)$ if and only if \eqref{PassCharCond2} above holds
together with the additional conditions:
\begin{enumerate}
\addtocounter{enumi}2

\item \label{PassCharCond1} 

We have the  \emph{Green--Lagrange identity}
\begin{equation}\label{BackgroundG-L}
 2 \Re \ipd z{L z}_\Xscr + \norm{Kz}_\Yscr^2
 = \norm{Gz}_\Uscr^2  \quad \text{ for all } \quad z \in \Zscr.
\end{equation}

\item \label{PassCharCond4}

$K \Zscr = \Yscr$ and $(\gamma + L) \Null{K} = \Xscr$ for some $\gamma
  \in \cplus$ (hence, for all $\gamma \in \cplus$).


\end{enumerate}

\end{proposition}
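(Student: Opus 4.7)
The overall plan is to pivot on the main operator $A := L|_{\Null{G}}$ and reduce everything to the Lumer--Phillips theorem: the Green--Lagrange inequality \eqref{PassScattIneq} applied to elements of $\Null{G}$ immediately yields dissipativity of $A$, while the range condition in item~(ii) upgrades this to $m$-dissipativity. The conservative statement then falls out essentially for free by applying the passive equivalence to the candidate time-flow inverse $\Xi^\TF = (K, -L, G)$.

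For the forward direction in the passive case, I pick $z_0 \in \Zscr$, extend the constant $Gz_0 \in \Uscr$ to a $C^2$-input $u$ with $u(0) = Gz_0$, and invoke Proposition~\ref{SolvabilityProp} to obtain the smooth solution $z$ with $z(0) = z_0$ and $\dot z(0) = L z_0$. Evaluating \eqref{ConsCondDiff} at $t = 0$ produces \eqref{PassScattIneq} at $z = z_0$. The surjectivity $G\Zscr = \Uscr$ is axiom~(iii) of Definition~\ref{BoundaryNodeDef}. Because $A$ is a semigroup generator by Definition~\ref{BoundaryNodeDef}(iv) and additionally dissipative by \eqref{PassScattIneq} restricted to $\Null{G}$, it generates a contraction semigroup, so $\cplus$ sits in its resolvent set, giving $(\beta - L)\Null{G} = \Xscr$ for every $\beta \in \cplus$.

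For the reverse direction in the passive case, dissipativity of $A = L|_{\Null{G}}$ is immediate from \eqref{PassScattIneq} because $Gz$ vanishes on $\Null{G}$; item~(ii) then provides $m$-dissipativity, and density of $\Null{G}$ in $\Xscr$ follows from the standard fact that any $m$-dissipative operator on a Hilbert space is densely defined. Lumer--Phillips then yields the $C_0$-contraction semigroup required by Definition~\ref{BoundaryNodeDef}(iv). Closedness of $\bbm{G\\L\\K}^T$ on $\Zscr$ follows from closedness of $A$ (automatic from the bounded resolvent $(\beta - A)^{-1}$) together with the Cauchy estimate $\norm{K(z_n - z_m)}_\Yscr^2 \leq \norm{G(z_n - z_m)}_\Uscr^2 + 2\norm{z_n - z_m}_\Xscr\norm{L(z_n - z_m)}_\Xscr$ extracted from \eqref{PassScattIneq}, together with a splitting of $\Zscr$ via a bounded right inverse of $G$ afforded by its surjectivity. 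Finally, scattering passivity \eqref{ConsCondDiff} comes from Proposition~\ref{SolvabilityProp}: for a smooth solution one differentiates $\norm{z(t)}_\Xscr^2$ and applies \eqref{PassScattIneq} pointwise at $z = z(t)$.

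For the conservative case, I exploit that the Green--Lagrange identity \eqref{BackgroundG-L} may be read as \eqref{PassScattIneq} both for $\Xi$ and, after rearrangement, for the candidate time-flow inverse $\Xi^\TF = (K, -L, G)$, while item~(iv) is exactly the hypothesis of item~(ii) with $(K, -L, G)$ in place of $(G, L, K)$. Applying the just-proved passive equivalence to $\Xi^\TF$ shows that it too is a passive boundary node, so $\Xi$ is time-flow invertible; since \eqref{BackgroundG-L} promotes \eqref{ConsCondDiff} to equality pointwise along smooth solutions of both $\Xi$ and $\Xi^\TF$, both are energy preserving, whence $\Xi$ is conservative by Definition~\ref{ConsNodeDef}. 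The principal technical obstacle is the combined density-and-closedness step in the reverse passive direction; the rest is quite mechanical once the Lumer--Phillips framework is in place.
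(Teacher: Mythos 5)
Your overall plan is sound and, in fact, mirrors the route of the result the paper itself does not prove but only cites (the text after the proposition refers to \cite[Theorem~2.5]{M-S:IPCBCS} and \cite[Proposition~2.5]{M-S:CBCS}): the forward direction by evaluating \eqref{ConsCondDiff} at $t=0$ along the solution from Proposition~\ref{SolvabilityProp}, the converse via $m$-dissipativity of $A=L\rst{\Null{G}}$, and the conservative case by applying the passive equivalence to $\Xi^\TF=(K,-L,G)$ and using \eqref{BackgroundG-L} to turn the energy inequality into an equality (for the ``only if'' half you should also say explicitly that energy preservation of $\Xi$ and $\Xi^\TF$ yields \eqref{PassCharCond1} and \eqref{PassCharCond4}, but that is routine). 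The one step that is not correct as written is exactly the one you flag as the principal obstacle, the closedness required by Definition~\ref{BoundaryNodeDef}\eqref{BoundaryNodeDefRequirement2} in the ``if'' direction.

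You justify closedness of $\sbm{G\\L\\K}$ by invoking ``a bounded right inverse of $G$ afforded by its surjectivity.'' Algebraic surjectivity of an unbounded operator does not provide a bounded right inverse: the natural way to get one is the open mapping theorem applied to $G$ on $\Zscr$ with the graph norm \eqref{SolutionSpaceNorm}, and that requires $\Zscr$ to be complete in this norm, which is precisely the closedness of $\sbm{G\\L\\K}$ you are trying to prove. So the argument is circular (your Cauchy estimate for $K$ only handles the easy last component once convergence of $z_n$, $Gz_n$, $Lz_n$ to an element of the graph is already secured). The gap is fixable, and the fix is where the Green--Lagrange inequality really earns its keep: dropping $\norm{Kz}_\Yscr^2$ in \eqref{PassScattIneq} gives, for $\Re\beta>0$ and all $z\in\Zscr$, the a~priori estimate $\Re\beta\,\norm{z}_\Xscr^2 \leq \Re\ipd{z}{(\beta-L)z}_\Xscr + \tfrac12\norm{Gz}_\Uscr^2 \leq \norm{z}_\Xscr\norm{(\beta-L)z}_\Xscr + \tfrac12\norm{Gz}_\Uscr^2$, hence $\norm{z}_\Xscr \leq C\bigl(\norm{Gz}_\Uscr + \norm{(\beta-L)z}_\Xscr\bigr)$. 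Together with condition \eqref{PassCharCond2}, the map $\sbm{G\\ \beta-L}:\Zscr\to\Uscr\times\Xscr$ is therefore bijective with bounded inverse, hence a closed operator from $\Xscr$ into $\Uscr\times\Xscr$; consequently $\sbm{G\\L}$ is closed, and your relative bound $\norm{Kz}_\Yscr^2\leq\norm{Gz}_\Uscr^2+2\norm{z}_\Xscr\norm{Lz}_\Xscr$ then upgrades this to closedness of the full triple. This estimate also delivers, a~posteriori, exactly the bounded right inverse of $G$ you wanted to use.
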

\noindent This is a slight modification of \cite[Theorem
  2.5]{M-S:IPCBCS}. See also \cite[Proposition 2.5]{M-S:CBCS}. The
abstract boundary spaces as discussed in \cite{G-G:BVPODE} are
essentially (impedance) conservative strong nodes as explained in
\cite[Section~5]{M-S:IPCBCS}.

\section{\label{MajorationSection} Conservative majorants} 

In some applications, the dissipative character of a linear dynamical
system is often due to a distinct part of the model such as a term or
a boundary condition imposed on the defining PDE.  If this part is
completely removed from the model, the resulting more simple system is
conservative and, in particular, internally well-posed. We call it a
\emph{conservative majorant} of the original dissipative system.

Intuition from engineering and physics hints that increasing
dissipation should make the system ``better behaved'' and not spoil
the internal well-posed\-ness.\footnote{The dissipativity or even the
  internal well-posedness of the time-flow inverted system is, if
  course, destroyed since adding dissipation creates the ``arrow of
  time''.}  The following
Theorems~\ref{FirstConservativeMajorationThm} and
\ref{SecondConservativeMajorationThm} apply to many boundary control
systems. However, they are written for \emph{passive} majorants since
the proofs remain the same, and this way the results can be applied
successively to systems having both boundary dissipation and
dissipative terms.  
\begin{theorem} \label{FirstConservativeMajorationThm}
  Let $\widetilde \Xi = (\sbm{G \\ \tilde G},L,\sbm{K \\ \tilde K})$
  be a scattering passive boundary node on Hilbert spaces $(\Uscr
  \oplus \tilde \Uscr,\Xscr, \Yscr \oplus \tilde \Yscr)$ with solution
  space $\tilde \Zscr$.  Then $\Xi := (G\rst{\Zscr}, L\rst{\Zscr},
  K\rst{\Zscr})$ is a scattering passive boundary node on
  $(\Uscr,\Xscr,\Yscr)$ with the solution space $\Zscr := \Null{\tilde
    G}$. Both $\widetilde \Xi$ and $\Xi$ have the same semigroup
  generators, equalling $L\rst{\Null{G} \cap \Null{\tilde G}}$.  If
  $\widetilde \Xi$ is a strong node, so is $\Xi$.
\end{theorem}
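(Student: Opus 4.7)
The plan is to apply Proposition~\ref{CheckingPassivityProp} directly to the triple $\Xi = (G\rst{\Zscr}, L\rst{\Zscr}, K\rst{\Zscr})$ on $(\Uscr, \Xscr, \Yscr)$, where $\Zscr := \Null{\tilde G} \subset \tilde\Zscr$. It will suffice to verify the Green--Lagrange inequality together with the two range conditions in Proposition~\ref{CheckingPassivityProp}\eqref{PassCharCond2}, since closedness and density of $\Null{G\rst{\Zscr}}$ come for free from that proposition; the additional structural claims about the generator and the strong-node property will then follow as easy corollaries.

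The Green--Lagrange inequality for $\Xi$ is immediate: evaluating the Green--Lagrange inequality of $\widetilde\Xi$ at any $z \in \Null{\tilde G}$ kills the term $\norm{\tilde G z}^2$ and leaves $\norm{\tilde K z}^2 \geq 0$ on the left, yielding $2\Re\ipd{z}{Lz}_\Xscr + \norm{K z}_\Yscr^2 \leq \norm{G z}_\Uscr^2$ for every $z \in \Zscr$. For the first range condition, I would use the surjectivity $\sbm{G \\ \tilde G}\tilde\Zscr = \Uscr \oplus \tilde\Uscr$ furnished by passivity of $\widetilde \Xi$: given $u \in \Uscr$, a preimage of $\sbm{u\\0}$ lies automatically in $\Null{\tilde G} = \Zscr$ and satisfies $Gz = u$. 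For the second range condition, observe that $\Null{G\rst{\Zscr}} = \Null{G} \cap \Null{\tilde G} = \Null{\sbm{G\\\tilde G}}$, which coincides with the domain of the semigroup generator of $\widetilde \Xi$. Since $\widetilde \Xi$ is passive and hence internally well-posed, $L\rst{\Null{G}\cap\Null{\tilde G}}$ generates a $C_0$ contraction semigroup on $\Xscr$, so $(\beta - L)(\Null{G}\cap\Null{\tilde G}) = \Xscr$ for every $\beta \in \cplus$. This simultaneously identifies the common generator as $L\rst{\Null{G}\cap\Null{\tilde G}}$.

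For the strong-node claim, I will exploit the fact that for a strong boundary node the graph norm on $\tilde\Zscr$ is equivalent to $\norm{z}_\Xscr^2 + \norm{Lz}_\Xscr^2$, so in particular $\tilde G$ is continuous in this simpler norm. Given a sequence $z_n \in \Zscr$ with $z_n \to z$ and $Lz_n \to w$ in $\Xscr$, closedness of $L$ on $\tilde\Zscr$ places $z$ in $\tilde\Zscr$ with $Lz = w$, and convergence in the simpler norm yields $\tilde G z = \lim \tilde G z_n = 0$, so $z \in \Zscr$ and $L\rst{\Zscr}$ is closed. The argument presents no genuine obstacle; the only point requiring care is to recognise that the two range conditions of Proposition~\ref{CheckingPassivityProp} both reduce to the corresponding data of $\widetilde \Xi$ via the kernel identity $\Null{G\rst{\Zscr}} = \Null{\sbm{G\\\tilde G}}$, after which passivity and the generator identification are essentially automatic.
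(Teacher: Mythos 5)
Your proposal is correct and follows essentially the same route as the paper: the Green--Lagrange inequality for $\Xi$ by dropping the $\tilde G$ and $\tilde K$ terms, surjectivity of $G\rst{\Zscr}$ via a preimage of $\sbm{u \\ 0}$ which automatically lies in $\Null{\tilde G}$, the resolvent range condition via the kernel identity $\Null{G\rst{\Zscr}} = \Null{\sbm{G \\ \tilde G}}$ (which also identifies the common generator), all fed into Proposition~\ref{CheckingPassivityProp}. The strong-node argument — closedness of $L$ on $\tilde\Zscr$ plus continuity of $\tilde G$ in the equivalent norm $\norm{z}_\Xscr^2 + \norm{Lz}_\Xscr^2$, so that $\Zscr = \Null{\tilde G}$ is closed — is likewise the paper's own argument.
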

\begin{proof}
 The Green--Lagrange inequality holds for $\Xi$ since for $z \in
 \Null{\tilde G}$ we have $\norm{G z}_{\Uscr} = \norm{\sbm{G \\ \tilde
     G}z}_{\Uscr \oplus \tilde \Uscr}$, and hence we get by the
 passivity of $\widetilde \Xi$
\begin{equation*}
     2 \Re \ipd{z}{L z}_\Xscr - \norm{G z}_{\Uscr}^2
     \leq - \norm{\sbm{K z \\ \tilde  K z}}_{\Yscr \oplus \tilde \Yscr}^2
     \leq - \norm{Kz}_{\Yscr}^2.
\end{equation*}
The surjectivity $G \Zscr = \Uscr$ follows from
$\Uscr \oplus \{ 0 \} \subset \Uscr \oplus \tilde \Uscr = \sbm{G
  \\ \tilde G} \Zscr$ and $\Zscr = \Null{\tilde G}$.
 Since $(\beta - L) \Null{G\rst{\Zscr}} = (\beta - L)\rst{\Null{\tilde G}} \Null{G} =
(\beta - L) \left ( \Null{G} \cap \Null{\tilde G} \right ) = (\beta - L) \Null{\sbm{G
    \\ \tilde G}} = \Xscr$, the passivity of $\Xi$ follows by
Proposition \ref{CheckingPassivityProp}.

Suppose that $L$ is closed (i.e., $\widetilde \Xi$ is strong) and that
$\tilde \Zscr \supset \Zscr \ni z_j \to z$ in $\Xscr$ is such that $L
z_j \to x$ in $\Xscr$ as $j \to \infty$. Because $L$ is closed, $z \in
\Dom{L} = \tilde \Zscr$ and $Lz = x$.  Thus, $\norm{z_j - z}_{\Zscr}^2
:= \norm{z_j - z}_{\Xscr}^2 + \norm{L(z_j - z)}_{\Xscr}^2 \to
0$. Because $\tilde G \in \BLO(\Zscr;\tilde \Uscr)$ by applying 
\eqref{SolutionSpaceNorm} on $\widetilde \Xi$, the space $\Zscr = \Null{\tilde G}$ is
closed in $\tilde \Zscr$ and thus $z \in \Zscr$.    We have now shown
that $L\rst{\Zscr}$ is closed with $\Dom{L\rst{\Zscr}} = \Zscr$.
\end{proof}
The restriction of the original solution space to $\Null{\tilde G}$ in
Theorem~\ref{FirstConservativeMajorationThm} is a functional analytic
description of boundary dissipation of a particular kind. If the
original scattering passive $\widetilde \Xi$ is translated to an
impedance passive boundary node by the external Cayley-transform (see
\cite[Definition~3.1]{M-S:IPCBCS}), then the abstract boundary
condition by restriction to $\Null{\tilde G}$ can be understood as a
termination to an ideally resistive element as depicted in
\cite[Fig.~1]{M-S:IPCBCS}.
\begin{theorem}
  \label{SecondConservativeMajorationThm}
  Let $\Xi = (G,L,K)$ be a scattering passive boundary node on Hilbert
  spaces $(\Uscr,\Xscr,\Yscr)$ with solution space $\Zscr$ and
  $\Xscr_1 = \Null{G}$ with the norm $\norm{z}_{\Xscr_1} = \norm{(1 -
    L)z}_{\Xscr}$.  Let $H$ be a dissipative operator on $\Xscr$ with
  $\Zscr \subset \Dom{H}$.\footnote{This means that $H:\Dom{H} \subset
    \Xscr \to \Xscr$ is an operator satisfying $\Zscr \subset \Dom{H}$
    and $\Re \ipd{z}{Hz}_\Xscr \leq 0$ for all $z \in \Zscr$.}  Denote
  the two assumptions as follows:
  \begin{enumerate}
  \item \label{SecondConservativeMajorationThmAss1} 
    There is $a > 0$ and $0 \leq b < 1$ such that $\norm{Hz}_\Xscr \leq a
    \norm{z}_\Xscr + b \norm{Lz}_\Xscr$ for all $z \in \Null{G}$.
  \item \label{SecondConservativeMajorationThmAss2} There is a Hilbert
    space $\tilde \Xscr$ such that $\Xscr_1 \subset \tilde \Xscr \subset
    \Dom{H}$, the inclusion $\Xscr_1 \subset \tilde \Xscr$ is compact
    and $H\rst{\tilde \Xscr} \in \BLO(\tilde \Xscr; \Xscr)$.
  \end{enumerate}
  If either \eqref{SecondConservativeMajorationThmAss1} or
  \eqref{SecondConservativeMajorationThmAss2} holds, then $\Xi_H :=
  (G,L+H,K)$ is a scattering passive boundary node.  We have $\Dom{A}
  = \Dom{A_H}$ where $A = L\rst{\Null{G}}$ and $A_H =
  (L+H)\rst{\Null{G}}$ are the semigroup generators of $\Xi$ and
  $\Xi_H$, respectively.  If the node $\Xi$ is strong and $H \in
  \BLO(\Xscr)$ (i.e., $b = 0$ in assumption
  \eqref{SecondConservativeMajorationThmAss1}), then $\Xi_H$ is a
  strong boundary node as well.
\end{theorem}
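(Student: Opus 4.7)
The plan is to verify the hypotheses of Proposition \ref{CheckingPassivityProp} for the triple $\Xi_H = (G,\,L+H,\,K)$ on the same common domain $\Zscr \subset \Dom{H}$. The Green--Lagrange inequality for $\Xi_H$ follows at once from that of $\Xi$: for $z \in \Zscr$,
\begin{equation*}
 2\Re\ipd{z}{(L+H)z}_\Xscr + \norm{Kz}_\Yscr^2
 = \bigl(2\Re\ipd{z}{Lz}_\Xscr + \norm{Kz}_\Yscr^2\bigr) + 2\Re\ipd{z}{Hz}_\Xscr
 \leq \norm{Gz}_\Uscr^2,
\end{equation*}
since the first bracket is at most $\norm{Gz}_\Uscr^2$ by passivity of $\Xi$ and the last term is nonpositive by dissipativity of $H$. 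The surjectivity $G\Zscr = \Uscr$ is inherited unchanged, so all the difficulty is in producing some $\beta \in \cplus$ with $(\beta - L - H)\Null{G} = \Xscr$.

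Writing $A := L\rst{\Null{G}}$, which is m-dissipative by the passivity of $\Xi$, the factorisation
\begin{equation*}
 \beta - (L+H) = (\beta - A)\bigl(I - (\beta-A)^{-1}H\bigr) \quad \text{on } \Null{G}
\end{equation*}
reduces the range condition to invertibility of $I - H(\beta-A)^{-1} \in \BLO(\Xscr)$ for some real $\beta > 0$. Under hypothesis \eqref{SecondConservativeMajorationThmAss1} the standard Hilbert-space estimate $\norm{(\beta-A)y}_\Xscr^2 \geq \beta^2\norm{y}_\Xscr^2 + \norm{Ay}_\Xscr^2$ (valid for $y \in \Null{G}$ and $\beta > 0$ because $\Re\ipd{y}{Ay}_\Xscr \leq 0$) gives $\norm{(\beta-A)^{-1}}_{\BLO(\Xscr)} \leq 1/\beta$ together with $\norm{A(\beta-A)^{-1}}_{\BLO(\Xscr)} \leq 1$. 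Substituting $y = (\beta-A)^{-1}x$ into the relative bound on $H$ then yields $\norm{H(\beta-A)^{-1}}_{\BLO(\Xscr)} \leq a/\beta + b$, which is strictly less than $1$ once $\beta > a/(1-b)$, so a Neumann series inverts $I - H(\beta-A)^{-1}$.

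Under hypothesis \eqref{SecondConservativeMajorationThmAss2} the argument is instead by compactness: $H(\beta-A)^{-1}$ is the composition $\Xscr \to \Xscr_1 \hookrightarrow \tilde\Xscr \to \Xscr$ of the bounded resolvent $(\beta-A)^{-1}$, the compact embedding $\Xscr_1 \hookrightarrow \tilde\Xscr$, and the bounded $H\rst{\tilde\Xscr}$; hence it is compact on $\Xscr$. By the Fredholm alternative invertibility reduces to injectivity, which follows from dissipativity of $A+H$ on $\Null{G}$: if $(I - H(\beta-A)^{-1})x = 0$, then $y := (\beta-A)^{-1}x \in \Null{G}$ satisfies $(A+H)y = \beta y$, whence $\beta\norm{y}_\Xscr^2 = \Re\ipd{y}{(A+H)y}_\Xscr \leq 0$ and so $y = 0 = x$. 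This compact-perturbation step is the main obstacle: one must use the auxiliary space $\tilde\Xscr$ to upgrade the mere boundedness of $H$ into $A$-compactness before dissipativity of the perturbed generator can close the Fredholm loop.

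Once the range condition is secured, Proposition \ref{CheckingPassivityProp} gives the passive-boundary-node conclusion for $\Xi_H$, and the equality $\Dom{A} = \Dom{A_H} = \Null{G}$ is automatic since both generators are restrictions of their interior operators to the same kernel. Finally, when $\Xi$ is strong (so $L$ is closed on $\Zscr$) and $H \in \BLO(\Xscr)$, the sum $L + H$ is closed on $\Zscr$ as a bounded perturbation of a closed operator, so $\Xi_H$ is strong as well.
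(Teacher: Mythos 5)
Your proposal is correct and follows essentially the same route as the paper: the Green--Lagrange inequality for $L+H$ is immediate from passivity of $\Xi$ and dissipativity of $H$, and the range condition is reduced to invertibility of $I - H(\beta-A)^{-1}$, settled by a Neumann series under assumption \eqref{SecondConservativeMajorationThmAss1} and by compactness of $H(\beta-A)^{-1}$ plus dissipativity of $A_H$ (Fredholm alternative) under assumption \eqref{SecondConservativeMajorationThmAss2}, with the strongness claim handled identically. The only difference is cosmetic: you obtain the bounds $\norm{(\beta-A)^{-1}}\leq 1/\beta$ and $\norm{A(\beta-A)^{-1}}\leq 1$ from the elementary numerical-range estimate $\norm{(\beta-A)y}_\Xscr^2 \geq \beta^2\norm{y}_\Xscr^2 + \norm{Ay}_\Xscr^2$, which is a cleaner derivation of the same estimate $\norm{H(\beta-A)^{-1}} \leq a/\beta + b < 1$ that the paper reaches via the dissipativity of $(A-\beta)^{-1}$ and the Hille--Yosida resolvent bound.
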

\noindent Both the assumptions
\eqref{SecondConservativeMajorationThmAss1} and
\eqref{SecondConservativeMajorationThmAss2} hold if $H \in
\BLO(\Xscr)$ and $\Xscr_1 \subset \Xscr$ with a compact inclusion.
This is the case in \cite[Section~5]{A-M:CPBCS} in the context of an
impedance passive system.  The compactness property is typically a
consequence of the Rellich--Kondrachov theorem \cite[Theorem 1,
  p. 144]{E-G:MTFPF} for boundary nodes defined by PDE's on bounded
domains.  In many applications such as Theorem~\ref{WebsterNodeThm}
below, the operator $H$ is even self-adjoint.  We give an example of
the 1D wave equation with Kelvin--Voigt damping in Section~\ref{ConclSec}
  where Theorem~\ref{SecondConservativeMajorationThm} cannot be
  applied.
\begin{proof}
By using assumption \eqref{SecondConservativeMajorationThmAss1}: This
argument is motivated by \cite[Theorem~2.7 on p.~501]{TK:PTLO}.  Let
us first show that $A_H := A + H\rst{\Null{G}}$ with $\Dom{A_H} =
\Null{G}$ generates a contraction semigroup on $\Xscr$ where $A =
L\rst{\Null{G}}$ generates the contraction semigroup of $\Xi$ as
usual. As a first step, we establish the inequality $\norm{H (s -
  A)^{-1}}_{\BLO(\Xscr)} < 1$ for all real $s$ large enough.

Let $\beta > 0$ be arbitrary. For all $s > \beta$ and $z \in \Xscr$ we
have
\begin{equation} \label{SecondConservativeMajorationThmAss1Eq1}
\begin{aligned}
  \norm{H (s - A)^{-1} z}_\Xscr \leq & a \norm{(s - A)^{-1}z}_\Xscr +
  b \norm{A(s - A)^{-1}z}_\Xscr \\  \leq & (a + \beta b) \norm{(s -
    A)^{-1}z}_\Xscr\\& + \frac{b}{s - \beta} \left \| {\left (\frac{1}{s -
      \beta} - (A - \beta)^{-1} \right )^{-1}z} \right \|_\Xscr
\end{aligned}
\end{equation}
since
\begin{equation*}
- A (s - A)^{-1} = \frac{1}{s - \beta} \left (\frac{1}{s - \beta} - (A -
\beta)^{-1} \right )^{-1} - \beta (s - A)^{-1}.
\end{equation*}
Since $A$ is a maximally dissipative operator on $\Xscr$, we have for
all $z = (A - \beta) x \in \Xscr$ with $x \in \Dom{A}$
\begin{align*}
   \Re \left< (A - \beta)^{-1} z, z \right >_{\Xscr} = &\Re \left< (A
  - \beta)^{-1} (A - \beta) x, (A - \beta) x \right >_{\Xscr} \\ 
   = & \Re \left< x, (A - \beta) x \right >_{\Xscr}\\ =& \Re \left< x, A x
  \right >_{\Xscr} - \beta \norm{x}^2_\Xscr \leq 0.
\end{align*}
Thus, the operator $(A - \beta)^{-1}$ is dissipative, and it is
maximally so because $(A - \beta)^{-1} \in \BLO(\Xscr)$.

Because $(A - \beta)^{-1}$ generates a $C_0$ contraction semigroup on
$X$, the Hille--Yoshida generator theorem gives the resolvent estimate
\begin{equation*}
  \frac{1}{s - \beta} \left \| \left (\frac{1}{s - \beta} - (A -
    \beta)^{-1} \right )^{-1} \right \|_{\BLO(\Xscr)} \leq 1
\end{equation*}
for $s > \beta > 0$. Similarly, $\norm{(s - A)^{-1}}_{\BLO(\Xscr)}
\leq 1/s$ for $s > 0$. These together with
\eqref{SecondConservativeMajorationThmAss1Eq1} give
\begin{equation*}
\frac{\norm{H (s - A)^{-1} z}_\Xscr}{\norm{z}_\Xscr} \leq \frac{a + \beta b}{s}
+ b <  1 \text{ for all } s > \frac{a+ \beta b}{1 - b}.
\end{equation*}
Because $\beta > 0$ was arbitrary, we get $\norm{H (s -
  A)^{-1}}_{\BLO(\Xscr)} < 1$ for all $s > \frac{a}{1 - b}$.  We
conclude that $(a/(1 - b), \infty) \subset \rho(A_H)$ and
\begin{equation} \label{SecondConservativeMajorationThmAss1Eq2}
(s - A_H)^{-1} =  (s - A)^{-1} (I - H(s - A)^{-1})^{-1}
\end{equation}
 where $\Dom{A_H} =
\Dom{A} = \Null{G}$.  In particular, we have shown that $(2a/(1 - b) -
L - H)\Null{G} = \Xscr$ (that $G \Zscr = \Uscr$ holds, follows because
$\Xi$ itself is a boundary node with the same input boundary operator
$G$).  Since the Green--Lagrange inequality \eqref{PassScattIneq}
holds by the passivity of $\Xi$ and $\Re \ipd{z}{Hz}_\Xscr \leq 0$ by
assumption, we conclude that \eqref{PassScattIneq} holds with $L + H$
in place of $L$, too.  Thus $\Xi_H$ is a scattering passive boundary
node by Proposition \ref{CheckingPassivityProp}.

By using assumption \eqref{SecondConservativeMajorationThmAss2}: As in
the first part of this proof, it is enough to prove that $\rho(A_H)
\cap \C_+ \neq \emptyset$ by verifying
\eqref{SecondConservativeMajorationThmAss1Eq2}. Because $(s - A)^{-1}
\in \BLO(\Xscr;\Xscr_1)$, $\Xscr_1 \subset \tilde \Xscr$ is compact,
and $H\rst{\tilde \Xscr} \in \BLO(\tilde \Xscr;\Xscr)$, we conclude
that $H(s - A)^{-1} \in \BLO(\Xscr)$ is a compact operator for all $s
\in \C_+$.  If there is a $s > 0$ such that $1 \notin \sigma(H(s -
A)^{-1}) \subset \sigma_p(H(s - A)^{-1}) \cup \{ 0 \}$, then
\eqref{SecondConservativeMajorationThmAss1Eq2} holds, $s \in
\rho(A_H)$, and $\Xi_H$ is a passive boundary node as argued in the
first part of the proof. For contradiction, assume that $1 \in
\sigma_p(H(s_0 - A)^{-1})$ for some $s_0 > 0$. This implies $A_H x_0 =
s_0 x_0$ for some $x_0 \in \Dom{A_H}$, and hence
\begin{equation*}
  \Re \left <A_H x_0, x_0 \right >_\Xscr = s_0 \norm{x_0}^2_\Xscr > 0
\end{equation*}
which contradicts the dissipativity of $A_H = A + H\rst{\Null{G}}$.
Thus \eqref{SecondConservativeMajorationThmAss1Eq2} holds and $\Dom{A}
= \Dom{A_H}$. The final claim about strongness of $\Xi_H$ holds
because perturbations of closed operators by bounded operators are
closed.
\end{proof} 
The perturbation $H$ in Theorem~\ref{SecondConservativeMajorationThm}
is a densely defined dissipative operator on $\Xscr$. As such, it has
a maximally dissipative (closed) extension
$\widetilde{H}:\Dom{\widetilde{H}} \subset \Xscr \to \Xscr$ satisfying
$\widetilde{H}^* \subset H^*$, and the adjoint $\widetilde{H}^*$ is
maximally dissipative as well.  Without loss of generality we may
assume that $H = \widetilde{H}$ in
Theorem~\ref{SecondConservativeMajorationThm}.  Furthermore, it is
possible to use $\tilde \Xscr = \Dom{\widetilde{H}}$ equipped with the
graph norm $\norm{z}^2_{\Dom{\widetilde{H}}} = \norm{z}_\Xscr^2 +
\norm{\widetilde{H} z}_\Xscr^2$ in assumption
\eqref{SecondConservativeMajorationThmAss2}, and it only remains to
check whether $\Xscr_1 \subset \Dom{\widetilde{H}}$ compactly.

Let us consider the adjoint semigroup of the passive boundary node
$\Xi_H = (G, L+H, K)$, majorated by the conservative node $\Xi =
(G,L,K)$. The adjoint semigroup is generated by the maximally
dissipative operator $A_H^*$ where $A_H = (L + H)\rst{\Null{G}}$ is
maximally dissipative under the assumptions of
Theorem~\ref{SecondConservativeMajorationThm}.
\begin{proposition} \label{AdjointSemigroupProp}
Let $\Xi = (G,L,K)$ be a scattering conservative boundary node on
Hilbert spaces $(\Uscr,\Xscr,\Yscr)$ with solution space $\Zscr$.  Let
$H$ be a dissipative operator on $\Xscr$ with $\Zscr \subset \Dom{H}$.
Assume that either of the assumptions
\eqref{SecondConservativeMajorationThmAss1} or
\eqref{SecondConservativeMajorationThmAss2} of
Theorem~\ref{SecondConservativeMajorationThm} holds, and let the
extension $\widetilde{H}$ be defined as above.
\begin{enumerate}
\item \label{AdjointSemigroupPropClaim1} If $\Null{K} \subset
  \Dom{\widetilde{H}^*}$, then $(- L + \widetilde{H}^*)\rst{\Null{K}}
  \subset A_H^*$.
\item \label{AdjointSemigroupPropClaim2} If $\Xi$ is time-flow
  invertible and $\Zscr \subset \Dom{\widetilde{H}^*}$, then
  $\Xi^\TF_{\widetilde{H}^*} := (K, -L + \widetilde{H}^*,G)$ is an
  internally well-posed boundary node if and only if $(- L +
  \widetilde{H}^*)\rst{\Null{K}} = A_H^*$.
\item \label{AdjointSemigroupPropClaim3} If $\Xi$ is conservative and
  $\Zscr \subset \Dom{\widetilde{H}^*}$, then
  $\Xi^\TF_{\widetilde{H}^*}$ is a passive boundary node if and only if
  $(- L + \widetilde{H}^*)\rst{\Null{K}} = A_H^*$.
\end{enumerate}
\end{proposition}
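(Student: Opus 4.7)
The plan is to reduce all three claims to the conservative Green--Lagrange identity for $\Xi$ combined with adjoint book-keeping for the dissipative perturbation. Since $\Xi$ is conservative, the identity of Proposition~\ref{CheckingPassivityProp}\eqref{PassCharCond1} polarises to
\begin{equation*}
\ipd{Lw}{z}_\Xscr + \ipd{w}{Lz}_\Xscr + \ipd{Kw}{Kz}_\Yscr = \ipd{Gw}{Gz}_\Uscr \quad \text{ for all } w, z \in \Zscr.
\end{equation*}
Two supplementary facts are used throughout: by Theorem~\ref{SecondConservativeMajorationThm}, $A_H$ generates a $C_0$ contraction semigroup on the Hilbert space $\Xscr$, so the adjoint $A_H^*$ is itself a $C_0$ contraction semigroup generator; and $\widetilde{H}^*$ is maximally dissipative, as recorded just before the proposition.

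For claim \eqref{AdjointSemigroupPropClaim1}, fix $z \in \Null{K}\cap\Dom{\widetilde{H}^*}$ and any $x \in \Null{G} = \Dom{A_H}$. Using $Gx=0$ and $Kz=0$, the polarised identity reduces to $\ipd{Lx}{z}_\Xscr = -\ipd{x}{Lz}_\Xscr$. Since $x \in \Zscr \subset \Dom{H}$ with $H \subset \widetilde{H}$ and $z \in \Dom{\widetilde{H}^*}$, the adjoint pairing yields $\ipd{Hx}{z}_\Xscr = \ipd{\widetilde{H}x}{z}_\Xscr = \ipd{x}{\widetilde{H}^* z}_\Xscr$. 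Adding the two, $\ipd{A_H x}{z}_\Xscr = \ipd{x}{(-L+\widetilde{H}^*)z}_\Xscr$, which is exactly the statement that $z \in \Dom{A_H^*}$ with $A_H^* z = (-L+\widetilde{H}^*)z$. For the forward direction of \eqref{AdjointSemigroupPropClaim2}, internal well-posedness makes $(-L+\widetilde{H}^*)\rst{\Null{K}}$ a $C_0$ semigroup generator that by \eqref{AdjointSemigroupPropClaim1} is contained in the $C_0$ semigroup generator $A_H^*$; two such comparable generators must coincide, since for any $\lambda$ with sufficiently large real part, $\lambda - A_H^*$ is already bijective onto $\Xscr$, forcing the domains to agree. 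Conversely, the equality automatically provides the semigroup-generator axiom of Definition~\ref{BoundaryNodeDef}\eqref{IntWellPosedBNDNode}, while $K\Zscr=\Yscr$ and density of $\Null{K}$ in $\Xscr$ are inherited from $\Xi^\TF = (K,-L,G)$ by the time-flow invertibility of $\Xi$.

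Claim \eqref{AdjointSemigroupPropClaim3} combines the above with the diagonal $w=z$ case of the polarised identity, which gives $-2\Re\ipd{z}{Lz}_\Xscr = \norm{Kz}_\Yscr^2 - \norm{Gz}_\Uscr^2$. Together with the dissipativity $2\Re\ipd{z}{\widetilde{H}^* z}_\Xscr \leq 0$ this produces the Green--Lagrange inequality
\begin{equation*}
2\Re\ipd{z}{(-L+\widetilde{H}^*)z}_\Xscr + \norm{Gz}_\Uscr^2 \leq \norm{Kz}_\Yscr^2 \quad \text{ for all } z \in \Zscr
\end{equation*}
required of $\Xi^\TF_{\widetilde{H}^*}$. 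The forward direction of \eqref{AdjointSemigroupPropClaim3} then follows from \eqref{AdjointSemigroupPropClaim2}, since passivity implies internal well-posedness. For the converse, the equality makes $A_H^* = (-L+\widetilde{H}^*)\rst{\Null{K}}$ a $C_0$ contraction semigroup generator, so $(\beta - (-L+\widetilde{H}^*))\Null{K} = \Xscr$ for every $\beta > 0$; together with $K\Zscr=\Yscr$ inherited from conservativity of $\Xi$ and the inequality above, Proposition~\ref{CheckingPassivityProp} delivers passivity.

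The main technical obstacle is the closedness of the triple $\sbm{K \\ -L+\widetilde{H}^* \\ G}$ on $\Zscr$ required in the backward direction of \eqref{AdjointSemigroupPropClaim2}: given $z_n \to z$ in $\Xscr$ with $Kz_n \to y$, $(-L+\widetilde{H}^*)z_n \to w$, and $Gz_n \to u$, one must recover the separate limits of $Lz_n$ and $\widetilde{H}^* z_n$ before invoking closedness of the triple for $\Xi^\TF$. The approach is to apply the closed graph theorem to $\widetilde{H}^* : (\Zscr, \norm{\cdot}_\Zscr) \to \Xscr$, which is everywhere defined by the assumption $\Zscr \subset \Dom{\widetilde{H}^*}$ and closed by closedness of $\widetilde{H}^*$, hence bounded; the closedness of the perturbed triple then reduces to that of the unperturbed triple for $\Xi^\TF = (K,-L,G)$, which holds by the time-flow invertibility of $\Xi$.
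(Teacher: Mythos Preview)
Your proof is correct and follows essentially the same strategy as the paper's: both hinge on the boundedness of $\widetilde{H}^*\rst{\Zscr}$ via the closed graph theorem for the backward direction of \eqref{AdjointSemigroupPropClaim2}, and on the Green--Lagrange identity of the conservative node for \eqref{AdjointSemigroupPropClaim1} and \eqref{AdjointSemigroupPropClaim3}. The only cosmetic differences are that you verify \eqref{AdjointSemigroupPropClaim1} by directly polarising the Green--Lagrange identity (whereas the paper quotes $A^* + T^* \subset (A+T)^*$ together with the conservative-node fact $A^* = -L\rst{\Null{K}}$), and for the forward direction of \eqref{AdjointSemigroupPropClaim2} you invoke ``two comparable $C_0$-generators coincide'' rather than the paper's maximal-dissipativity phrasing; these are equivalent.
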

\noindent If $\Xi = (G,L,K)$ is conservative, so is its time-flow
inverse $\Xi^\TF = (K, -L, G)$ by Definition~\ref{ConsNodeDef}. In
this case, it may be possible to use
Theorem~\ref{SecondConservativeMajorationThm} to conclude that
$\Xi^\TF_{\widetilde{H}^*}$ is a passive boundary node as well.  If both
$\Xi_H$ and $\Xi^\TF_{\widetilde{H}^*}$ are passive, then they cannot
be time-flow inverses of each other unless both nodes are, in fact,
conservative; i.e., $H = \widetilde{H}^* = 0$ on $\Zscr$.
\begin{proof}
It is easy to see that $A^* + T^* \subset (A + T)^*$ holds for
operators $A, T$ on $\Xscr$ with $\Dom{A} \cap \Dom{T}$ dense in
$\Xscr$ . Applying this on $A = L\rst{\Null{G}}$ and $T :=
\widetilde{H}\rst{\Null{G}}$ we get on $\Null{K}$ the inclusion $-L
\rst{\Null{K}} + \left ( \widetilde{H}\rst{\Null{G}} \right )^*
\subset A_H^*$. Here we used $A^* = -L \rst{\Null{K}}$ which holds
because $\Xi = (G,L,K)$ is a conservative boundary node whose dual
system (with semigroup generator $A^*$) coincides with the time-flow
inverse $\Xi^\TF = (K,-L,G)$.  Since $\Null{K} \subset
\Dom{\widetilde{H}^*}$ has been assumed, it follows that $\left
(\widetilde{H}\rst{\Null{G}} \right )^* z = \widetilde{H}^* z$ for all
$z \in \Null{K}$, and claim \eqref{AdjointSemigroupPropClaim1} now
follows.

The ``only if'' part of claims \eqref{AdjointSemigroupPropClaim2} and
\eqref{AdjointSemigroupPropClaim3}: By the internal well-posedness of
$\Xi^\TF_{\widetilde{H}^*}$, its main operator $(- L +
\widetilde{H}^*)\rst{\Null{K}}$ generates a $C_0$ semigroup, and its
resolvent set contains some right half plane by the Hille--Yoshida
theorem.  By claim \eqref{AdjointSemigroupPropClaim1} and the fact
that $A_H^*$ is (even maximally) dissipative, it follows that $(- L +
\widetilde{H}^*)\rst{\Null{K}}$ is dissipative. But then $(- L +
\widetilde{H}^*)\rst{\Null{K}}$ is maximally dissipative, and the
converse inclusion $A_H^* \subset (- L + \widetilde{H}^*)\rst{\Null{K}}$
follows.

The ``if'' part of claim \eqref{AdjointSemigroupPropClaim2}: The
operator $(- L + \widetilde{H}^*)\rst{\Null{K}}$ generates a
contraction semigroup on $\Xscr$ because it equals by assumption
$A_H^*$ where $A_H$ itself is a generator of a contraction semigroup
by Theorem~\ref{SecondConservativeMajorationThm}.

Equip the Hilbert space $\Dom{\widetilde{H}^*}$ with the graph norm of
the closed operator $\widetilde{H}^*$.  Since $\Zscr \subset
\Dom{\widetilde{H}^*}$ has been assumed, and both $\Zscr$ and
$\Dom{\widetilde{H}^*}$ are continuously embedded in $\Xscr$, the
inclusion $\Zscr \subset \Dom{\widetilde{H}^*}$ is continuous, too.
Now $\widetilde{H}^*\rst{\Zscr} \in \BLO(\Zscr;\Xscr)$ follows from
$\widetilde{H}^* \in \BLO(\Dom{\widetilde{H}^*};\Xscr)$.  Since now
$-L + \widetilde{H}^* \in \BLO(\Zscr;\Xscr)$, it follows that
$\Xi_{\widetilde{H}^*}^\TF$ is an internally well-posed boundary node
by \cite[Proposition~2.5]{M-S:CBCS}. (You could also argue by
verifying
Definition~\ref{BoundaryNodeDef}\eqref{BoundaryNodeDefRequirement2}
directly.)

The ``if'' part of claim \eqref{AdjointSemigroupPropClaim3}: The
``if'' part of claim \eqref{AdjointSemigroupPropClaim2} gives the
internal well-posedness of $\Xi^\TF_{\widetilde{H}^*}$. To show
passivity, only the Green--Lagrange inequality $ 2 \Re \ipd z{( -L +
  \widetilde{H}^*) z}_\Xscr \leq \norm{Kz}_\Yscr^2 -
\norm{Gz}_\Uscr^2$ is needed. This follows from \eqref{BackgroundG-L}
(by the conservativity of $\Xi^\TF$) and the dissipativity of
$\widetilde{H}^*$ with $\Zscr \subset \Dom {\widetilde{H}^*}$ (since
$\widetilde{H}$ is \emph{maximally} dissipative).
\end{proof}

\section{\label{WebsterSec} Generalised Webster's model for wave guides}

As proved in \cite{L-M:WECAD}, we arrive (under some mild
technical assumptions on $\Omega$ as explained in
\cite[Section~3]{L-M:WECAD}) to the following equations for the
approximate spatial averages of solutions of \eqref{WaveEq}:
\begin{equation} \label{WebstersEqBnrCtrl} 
  \begin{cases}
    & \psi_{tt} = \frac{c(s)^{2}}{A(s)} \frac{\partial}{\partial s}
    \left ( A(s) \frac{\partial \psi }{\partial s} \right ) 
- \frac{2 \pi \alpha W(s) c(s)^{2} }{A(s)} \frac{\partial \psi }{\partial t} \\
& 
    \text{\hspace{4.8cm}} \text{ for }  s \in (0,1) \text{ and } t \in \rplus, \\
    & - c(0) \psi_s(0,t) + \psi_t(0,t) = 2 \sqrt{\frac{c(0)}{\rho
        A(0)}} \, \tilde u(t) \quad
    \text{ for } t \in \rplus, \\
    & \psi(1,t) = 0 \quad  \text{ for } t \in \rplus, \quad \text{ and } \\
    & \psi(s,0) = \psi_0(s), \quad \rho \psi_t(s,0) = \pi_0(s) \quad
    \text{ for } s \in (0,1),
\end{cases}
\end{equation}
and the observation equation averages to
\begin{equation}\label{WebstersEqBnrCtrlObs}
  - c(0) \psi_s(0,t) - \psi_t(0,t) = 2 \sqrt{\frac{c(0)}{\rho
        A(0)}} \, \tilde y(t) \quad
    \text{ for } t \in \rplus.
\end{equation}
The notation has been introduced in Section~\ref{IntroSec}.
Analogously with the wave equation, the solution $\psi$ is called
\emph{Webster's velocity potential}.  In \cite[Section~3]{L-M:PEEWEWP}
we add a load function $f(s,t)$ to obtain the PDE $\psi_{tt} =
\frac{c(s)^{2}}{A(s)} \frac{\partial}{\partial s} \left ( A(s)
  \frac{\partial \psi }{\partial s} \right ) - \frac{2 \pi \alpha
  W(s)c(s)^2}{A(s)} \frac{\partial \psi }{\partial t} + f(s,t)$
because the argument there is based on the feed-forward connection
detailed in \cite[Fig.~1]{L-M:WECAD}. Only the boundary control input
is considered here, and it can be treated using boundary nodes.

We assume that the sound speed correction factor $\Sigma(s)$ and the
area function $A(s)$ are continuously differentiable for $s \in
[0,1]$, and that the estimates
\begin{equation} \label{UpperLowerEstimates}
  0 < \min_{s \in [0,1]}{A(s)} \leq \max_{s \in [0,1]}{A(s)} < \infty \text{ and }
  0 < \min_{s \in [0,1]}{c(s)} \leq \max_{s \in [0,1]}{c(s)} < \infty
\end{equation}
hold. These are natural assumptions recalling the geometry of the
tubular domain $\Omega$.  Define the operators
\begin{equation} \label{WebsterOpDef}
W := \frac{1}{A(s)} \frac{\partial}{\partial s} \left ( A(s) \frac{\partial}{\partial s}
\right ) \text{ and } D := - \frac{2 \pi  W(s)  }{A(s)}.
\end{equation}
The operator $D$ should be understood as a multiplication operator on
$L^2(0,1)$ by the strictly negative function $- 2 \pi W(\cdot)
A(\cdot)^{-1}$. Then the first of the equations in
\eqref{WebstersEqBnrCtrl} can be cast into first order form by using
the rule
\begin{equation*}
   \psi_{tt} = c(s)^{2} \left( W \psi + \alpha D \psi_t \right )  \quad \hat{=} \quad
  \frac{d}{dt} \bbm{\psi \\ \pi} 
  = \bbm{0 & \rho^{-1} \\  \rho c(s)^2 W & \alpha c(s)^2 D} \bbm{\psi \\ \pi}. 
\end{equation*}
Henceforth, let 
\begin{equation*}
L_W := \bbm{0 & \rho^{-1} \\ \rho c(s)^2 W & 0}:\Zscr_W \to \Xscr_W \text{ and }
H_W :=  \bbm{0 & 0 \\ 0 &  c(s)^2 D}:\Xscr_W \to \Xscr_W
\end{equation*}
where the Hilbert spaces are given by
\begin{align*}
  & \Zscr_W := \left (H_{\{1\}}\sp{1}(0,1) \cap H\sp{2}(0,1)\right ) \times
  H_{\{1\}}\sp{1}(0,1), \quad
  \Xscr_W := H_{\{1\}}\sp{1}(0,1) \times L\sp{2}(0,1) \\
  & \text{ where } \quad 
  H_{\{1\}}\sp{1}(0,1) := \left \{ f \in H\sp{1}(0,1): f(1) = 0 \right \}.
\end{align*}
Clearly we have $H_W \in \BLO(\Xscr_W)$, $H_W^* = H_W$, and this
operator is negative in the sense that $\ipd{ H_W \sbm{z_1 \\
    z_2}}{\sbm{z_1 \\ z_2} }_{\Xscr_W} = -2 \pi \int_{0}^1{
  \abs{z_2(s)}^2 W(s) c(s)^2 A(s)^{-1} \, ds} \leq 0$. So, the
operator $\alpha H_W$ for $\alpha > 0$ satisfies assumption
\eqref{SecondConservativeMajorationThmAss1} of
Theorem~\ref{SecondConservativeMajorationThm} with $b = 0$ and also
assumption \eqref{SecondConservativeMajorationThmAss2} of the same
theorem with $\tilde \Xscr = \Xscr$.

The Hilbert spaces $\Zscr_W$ and $\Xscr_W$ are equipped with the norms
\begin{align*}
  & \norm{\sbm{z_1 \\ z_2}}_{\Zscr_W}^2 := \norm{z_1}_{H^2(0,1)}^2 + \norm{z_2}_{H^1(0,1)}^2 
  \quad \text{ and } \\
  & \norm{\sbm{z_1 \\ z_2}}_{H\sp{1}(0,1) \times L\sp{2}(0,1)}^2 :=
  \norm{z_1}_{H\sp{1}(0,1)}^2 + \norm{z_2}_{L\sp{2}(0,1)}^2,
\end{align*}
respectively. We will use the \emph{energy norm} on $\Xscr_W$, which
for any $\rho > 0$ is defined by
\begin{equation} \label{WebstersEnergyNorm}
  \| \sbm{z_1 \\ z_2} \|_{\Xscr_W}^2 :=
  \frac{1}{2} \left (  \rho  \int_{0}^1{ \abs{z_1'(s)}^2A(s) \, ds} + 
    \frac{1}{\rho c^2} \int_{0}^1{ \abs{z_2(s)}^2 A(s) \Sigma(s)^{-2} \, ds} \right ).
\end{equation} 
This is an equivalent norm for $\Xscr_W$ because the conditions
\eqref{UpperLowerEstimates} hold and $\sqrt{2} \norm{z_1}_{L^2(0,1)}
\leq \norm{z_1'}_{L^2(0,1)}$ for all $z_1 \in
H_{\{1\}}\sp{1}(0,1)$. To see that the Poincar\'e inequality holds in
$H_{\{1\}}\sp{1}(0,1)$, note that for smooth functions $z$ with
$z(1)=0$, one has from the fundamental theorem of calculus that
\begin{equation*}
  \abs{z(s)}=\abs{\int_s^1z'(t)\,dt}\leq (1-s)^{1/2}\norm{z'}_{L^2(0,1)}.
\end{equation*}
From this, we proceed by squaring and integrating with respect to $s$,
and then passing to general Sobolev functions by approximation.
  
We define $\Uscr_W := \C$ with the absolute value norm $
\norm{u_0}_{\Uscr_W} := \abs{u_0}$.  The endpoint control and
observation functionals $G_W:\Zscr_W \to \Uscr_W$ and $K_W:\Zscr_W \to
\Uscr_W$ are defined by
\begin{align*}
  & G_W \sbm{z_1 \\ z_2} 
  := \frac{1}{2}\sqrt{\frac{A(0)}{ \rho  c(0)}} \left (- \rho c(0)z_1'(0) + z_2(0) \right ) \quad \text{ and } \\
  & K_W \sbm{z_1 \\ z_2} 
  := \frac{1}{2} \sqrt{\frac{A(0)}{\rho  c(0)}} \left (- \rho c(0)z_1'(0) - z_2(0) \right ).
\end{align*}
Now the generalised Webster's horn model
\eqref{WebstersEqBnrCtrl}--\eqref{WebstersEqBnrCtrlObs} for the state
$z(t) = \sbm{\psi(t) \\ \pi(t)}$ takes the form
\begin{align} \label{WebstersDiffEqNoF}
\begin{cases}
   &\frac{d}{dt}\sbm{\psi(t) \\ \pi(t)}  = \left ( L_W + \alpha H_W \right
  )\sbm{\psi(t)
    \\ \pi(t)},   \\
  & \tilde u(t) = G_W \sbm{\psi(t) \\ \pi(t)}, 
\end{cases}
\end{align}
and 
\begin{equation}
  \label{WebstersDiffEqNoFObs}
  \tilde y(t) = K_W \sbm{\psi(t) \\ \pi(t)}
\end{equation}
for all $t \in \rplus$. The initial conditions are $\sbm{\psi(0) \\
  \pi(0)} = \sbm{\psi_0 \\ \pi_0}$.  The state variable $\pi = \rho
\psi_t$ has the dimension of pressure, as for the wave equation.
 
The impedance passive version of the following
Theorem~\ref{WebsterNodeThm} is given in
\cite[Theorem~5.1]{A-M:CPBCS}, and it would be possible to deduce
parts of Theorem~\ref{WebsterNodeThm} from that result using the
external Cayley transform \cite[Definition 3.1]{M-S:IPCBCS}.  Here we
give a direct proof instead.

\begin{theorem} \label{WebsterNodeThm}
 Let the operators $L_W$, $H_W$, $G_W$, $K_W$, and spaces $\Zscr_W$,
 $\Xscr_W$, $\Uscr_W$ be defined as above.  Let $\sbm{\psi_0 \\ \pi_0}
 \in \Zscr_W$ and $\tilde u \in C^2(\rplus;\C)$ such that the
 compatibility condition $G_W\sbm{\psi_0 \\ \pi_0} = \tilde u(0)$
 holds.  Then for all $\alpha \geq 0$ the following holds:
  \begin{enumerate}
  \item \label{WebsterNodePropClaim1} The triple $\Xi^{(W)}_\alpha :=
    (G_W,L_W + \alpha H_W,K_W)$ is a scattering passive, strong
    boundary node on Hilbert spaces $(\Uscr_W,\Xscr_W,\Uscr_W)$.
    
  The semigroup generator $A_{W,\alpha} = \left (L_W + \alpha H_W \right
  )\rst{\Null{G_W}}$ of $\Xi^{(W)}_\alpha$ satisfies $A_{W,\alpha}^* =
  \left ( -L_W + \alpha H_W \right )\rst{\Null{K_W}} $ and $0 \in
  \rho(A_{W,\alpha}) \cap \rho(A_{W,\alpha}^*)$.
\item \label{WebsterNodePropClaim2} The equations in
  \eqref{WebstersDiffEqNoF} have a unique solution $\sbm{\psi \\ \pi}
  \in C^1(\rplus;\Xscr_W) \cap C(\rplus;\Zscr_W)$.  Hence we can
  define $\tilde y \in C(\rplus;\C)$ by equation
  \eqref{WebstersDiffEqNoFObs}.
  \item \label{WebsterNodePropClaim3} The solution of
    \eqref{WebstersDiffEqNoF} satisfies the energy dissipation inequality
    \begin{equation} \label{WebsterPassivityEstimate}
        \frac{d}{dt} \norm{\sbm{\psi(t) \\ \pi(t)}}_{\Xscr_W}^2  
        \leq \abs{\tilde u(t)}^2 - \abs{\tilde y(t)}^2 ,
 \quad t \in \rplus.
  \end{equation}
  \end{enumerate}
  Moreover, $\Xi^{(W)}_0$ is a conservative boundary node, and
  \eqref{WebsterPassivityEstimate} holds then as an equality.
\end{theorem}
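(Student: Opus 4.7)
The plan is to treat the conservative case $\alpha = 0$ first via Proposition~\ref{CheckingPassivityProp}, and then obtain the passive case $\alpha>0$ by applying the conservative majoration Theorem~\ref{SecondConservativeMajorationThm} with the perturbation $\alpha H_W$. Claim \eqref{WebsterNodePropClaim2} will then follow immediately from Proposition~\ref{SolvabilityProp}, and claim \eqref{WebsterNodePropClaim3} by integrating the Green--Lagrange (in)equality in time.

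For the conservativity of $\Xi^{(W)}_0$, the key is the Green--Lagrange identity. Expanding $2\Re\ipd{z}{L_W z}_{\Xscr_W}$ using the energy inner product associated with \eqref{WebstersEnergyNorm} together with the relation $c(s)^2 = c^2\Sigma(s)^2$, all $\rho$, $c$ and $\Sigma$ weights cancel, and the expression reduces to $\int_0^1 \Re(\overline{z_1'} z_2') A\dif s + \int_0^1 \Re(\overline{z_2}(A z_1')') \dif s$. Integration by parts in the second integral, using $z_2(1)=0$, cancels the first integral exactly and leaves only the boundary term $-A(0)\Re(\overline{z_2(0)}z_1'(0))$; a direct expansion of $\abs{G_W z}^2 - \abs{K_W z}^2$ via the polarisation $\abs{a+b}^2-\abs{a-b}^2 = 4\Re(a\bar b)$ produces precisely the same quantity. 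Surjectivity of $G_W$ and $K_W$ onto $\C$ is immediate since each functional has two free boundary degrees of freedom. The remaining range conditions $(\beta - L_W)\Null{G_W}=\Xscr_W$ and $(\gamma + L_W)\Null{K_W}=\Xscr_W$ reduce, after eliminating the second state component, to a second-order Sturm--Liouville problem of the form $-\tfrac{c(s)^2}{A(s)}(A z_1')' + \beta^2 z_1 = g$ with $z_1(1)=0$ and a Robin-type condition at $s=0$; coercivity in $H_{\{1\}}^1(0,1)$ via the Poincar\'e inequality recorded after \eqref{WebstersEnergyNorm} lets Lax--Milgram apply. Strongness of $\Xi^{(W)}_0$ reduces to closedness of $L_W$ on $\Zscr_W$, which is a standard distributional argument using that $A \in C^1[0,1]$ is bounded away from zero.

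For general $\alpha > 0$, the perturbation $\alpha H_W$ is bounded, self-adjoint and negative semidefinite on $\Xscr_W$ (it acts by multiplication in the second component by $-2\pi\alpha W(s) c(s)^2/A(s)$), hence satisfies assumption \eqref{SecondConservativeMajorationThmAss1} of Theorem~\ref{SecondConservativeMajorationThm} with $b=0$. That theorem then delivers passivity, strongness, and $\Dom{A_{W,\alpha}} = \Null{G_W}$. The adjoint formula $A_{W,\alpha}^* = (-L_W + \alpha H_W)\rst{\Null{K_W}}$ is obtained by combining the conservative-case identity $A_{W,0}^* = -L_W\rst{\Null{K_W}}$ (valid because dual and time-flow inverse coincide for conservative nodes, cf.~Definition~\ref{ConsNodeDef}) with the bounded self-adjoint perturbation rule $(A+B)^* = A^* + B$ for $B = \alpha H_W$.

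Finally, $0 \in \rho(A_{W,\alpha}) \cap \rho(A_{W,\alpha}^*)$ is verified by hand: solving $(L_W + \alpha H_W)z = f$ subject to $G_W z = 0$ forces $z_2 = \rho f_1$, after which the equation for $z_1$ reduces to $(A z_1')' = A h$ with $h$ determined by $f$, subject to $z_1(1)=0$ and $z_1'(0) = f_1(0)/c(0)$; this is integrated explicitly, and the dual case is symmetric. The main technical hurdle is the Green--Lagrange computation above, where one has to book-keep the weights $A$, $\Sigma$, $\rho$, $c$ with enough care that the bulk integrals annihilate and the residual boundary term matches $\abs{G_W z}^2 - \abs{K_W z}^2$ exactly.
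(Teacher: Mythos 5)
Your proposal is correct and follows the same overall strategy as the paper: establish the conservative case $\alpha=0$ via the Green--Lagrange identity and the range conditions of Proposition~\ref{CheckingPassivityProp}, then pass to $\alpha>0$ by Theorem~\ref{SecondConservativeMajorationThm} with the bounded, self-adjoint, negative perturbation $\alpha H_W$, and read off claims (ii)--(iii) from Proposition~\ref{SolvabilityProp} and the passivity inequality \eqref{ConsCondDiff}. The differences are local. For the range conditions you take $\beta>0$ and solve the resulting Sturm--Liouville problem with a Robin condition at $s=0$ by Lax--Milgram, whereas the paper takes $\beta=\gamma=0$ and integrates the ODE $(Az_1')'=Aw_2/(\rho c^2)$ explicitly, which simultaneously yields $0\in\rho(A_{W,0})$; both are valid, yours matching the hypotheses of Proposition~\ref{CheckingPassivityProp} verbatim, the paper's giving the invertibility at $0$ for free. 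For the adjoint formula you use $A_{W,\alpha}=A_{W,0}+\alpha H_W\rst{\Null{G_W}}$ together with $(A+B)^*=A^*+B^*$ for bounded $B$ and $A_{W,0}^*=-L_W\rst{\Null{K_W}}$ (dual = time-flow inverse for conservative nodes); this is a legitimate shortcut that bypasses the paper's route through the time-flow inverted node and Proposition~\ref{AdjointSemigroupProp}, at the cost of being tied to bounded perturbations, which suffices here. For $0\in\rho(A_{W,\alpha})$ with $\alpha>0$ you solve the boundary value problem directly (the dissipative term moves to the right-hand side once $z_2=\rho f_1$ is eliminated), while the paper argues via compactness of the resolvent and exclusion of $0$ as an eigenvalue using dissipativity of $A_{W,0}$ and $H_W$; your computation is elementary and also covers the adjoint case (which in fact follows automatically since $\rho(A^*)=\overline{\rho(A)}$). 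Finally, your closedness argument for $L_W$ on $\Zscr_W$ is a distributional-limit argument where the paper computes $L_W^*$ and uses $L_W=L_W^{**}$; both work. One cosmetic point: claim (iii) is the differential inequality obtained by evaluating the Green--Lagrange inequality along the smooth solution, so no integration in time is actually needed.
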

\noindent Under the assumptions of this proposition, we have $\psi \in
C(\rplus; H^2(0,1)) \cap C^1(\rplus; H^1(0,1)) \cap C^2(\rplus;
L^2(0,1))$.
\begin{proof}
Claim \eqref{WebsterNodePropClaim1}:
  By Theorem~\ref{SecondConservativeMajorationThm}, it is enough to
  show the conservative case $\alpha = 0$.  Let us first verify the
  that the Green--Lagrange identity
  \begin{equation} \label{WebsterGLId}
    2 \Re \ipd {\sbm{z_1 \\ z_2}}{L_W \sbm{z_1 \\ z_2}}_{\Xscr_W} 
    + \abs{K_W \sbm{z_1 \\ z_2}}^2 = \abs{G_W \sbm{z_1 \\ z_2}}^2
  \end{equation}
  holds for all $\sbm{z_1 \\ z_2} \in \Zscr_W$. By partial
  integration,
  we get 
  \begin{equation*}
    2 \Re \ipd {\sbm{z_1 \\
        z_2}}{L_W \sbm{z_1 \\ z_2}}_{\Xscr_W} = - A(0) \Re \left (
      \overline{z_1'(0)} z_2(0) \right ).
  \end{equation*}
  Now \eqref{WebsterGLId}
  follows since $\abs{G_W \sbm{z_1 \\ z_2}}^2 - \abs{K_W \sbm{z_1 \\
      z_2}}^2 = - A(0) \Re \left ( \overline{z_1'(0)} z_2(0) \right )$
  just as in equations \eqref{nDimWaveConsPropEq2} --
  \eqref{nDimWaveConsPropEq3}.
  
  It is trivial that $G_W \Zscr_W = K_W \Zscr_W = \Uscr_W$ since
  $\dim{\Uscr_W} = 1$ and neither of the operators $G_W$ and $K_W$
  vanishes.  We prove next that $L_W$ maps $\Null{G_W}$ \emph{bijectively}
  onto $\Xscr_W$.  Now, $\sbm{z_1 \\ z_2} \in \Null{G_W}$ and
  $\sbm{w_1 \\ w_2} \in \Xscr_W$ satisfy $L_W \sbm{z_1 \\ z_2} =
  \sbm{w_1 \\ w_2}$ if and only if $z_2 = \rho w_1$ and
  \begin{equation*}
    \frac{\partial}{\partial s} \left (A(\cdot) \frac{\partial z_1}{\partial s}\right ) =  \frac{A(\cdot) w_2}{\rho c(\cdot)^{2} }  , \quad
    z_1(1) = 0, \quad z_1'(0) = \frac{w_1(0)}{c(0)} . 
  \end{equation*}
  Since this equation has always a unique solution $z_1 \in H^2(0,1)$
  for any $w_1 \in H_{\{1\}}\sp{1}(0,1)$ and $ w_2 \in L\sp{2}(0,1)$,
  it follows that $L_W \Null{G_W} = \Xscr_W$ and $0 \in \rho(A_{W,0})$
  where $A_{W,0} = L_W \rst{\Null{G_W}}$ is the semigroup generator of
  $\Xi^{(W)}_0$.  We conclude by
  Proposition~\ref{CheckingPassivityProp} that $\Xi^{(W)}_0$ is a
  conservative boundary node as claimed.  That $\Xi^{(W)}_\alpha$ is
  passive for $\alpha > 0$ with semigroup generator $A_{W,\alpha} =
  \left ( L_W + \alpha H_W \right ) \rst{\Null{G_W}}$ follows by
  Theorem~\ref{SecondConservativeMajorationThm}.

  Because $H_W^* = H_W \in \BLO(\Xscr)$ is dissipative, we may apply
  Theorem~\ref{SecondConservativeMajorationThm} again to the time-flow
  inverted, conservative node $\left ( \Xi^{(W) }_0 \right )^\TF =
  (K_W, -L_W, G_W)$ to conclude that the boundary node $(K_W, -L_W +
  \alpha H_W^*, G_W)$ is passive as well.  Claim
  \eqref{AdjointSemigroupPropClaim3} of
  Proposition~\ref{AdjointSemigroupProp} implies that $A_{W,\alpha}^*
  = \left ( - L_W + \alpha H_W \right )\rst{\Null{K_W}}$.

  Let us argue next that $0 \in \rho(A_{W,\alpha}) \cap
  \rho(A_{W,\alpha}^*)$ for $\alpha > 0$. Because $A_{W,\alpha}$ is a
  compact resolvent operator, it is enough to exclude $0 \in
  \sigma_p(A_{W,\alpha})$.  Suppose $A_{W,\alpha} z_0 = 0$, giving
  $\Re{\left < A_{W,0} z_0, z_0\right >_\Xscr} + \Re{\left < \alpha
    H_W z_0, z_0\right >_\Xscr} = \Re{\left < A_{W,\alpha} z_0, z_0
    \right >_\Xscr} = 0$. Thus $$\Re{\left < A_{W,0} z_0, z_0\right
    >_\Xscr} = \alpha \Re{\left < - H_W z_0 , z_0\right >_\Xscr} =
  \alpha \norm{(- H_W)^{1/2} z_0}^2_\Xscr = 0$$ by the dissipativity
  of both $A_{W,0}$ and $H_W$, and the fact that $- H_W$ is a
  self-adjoint nonnegative operator. Thus $z_0 \in \Null{H_W}$ and
  hence $A_{W,0} z_0 = (A_{W,0} + \alpha H_W) z_0 = A_{W,\alpha} z_0 =
  0$.  Because $0 \in \rho(A_{W,0})$ has already been shown, we
  conclude that $z_0 = 0$.

  The node $\Xi^{(W)}_0$ is strong (i.e., $L_W$ is closed with
    $\Dom{L_W} = \Zscr_W$) since $L_W = L_W^{**}$ and
$L_W^* = - L_W \rst{\Dom{L_W^*}}$ where 
\begin{equation*} 
  \Dom{L_W^*} = \left \{ \sbm{w_1 \\ w_2} \in H^1_{\{1\}}(0,1) \cap H^2(0,1)
  \times H^1_{0}(0,1) \, : \, \tfrac{\partial w_1}{\partial s}(0) = 0
 \right \}
\end{equation*}
which is dense in $\Xscr_W$ and satisfies $\Dom{L_W^*} \subset
\Dom{L_W}$.  That $\Xi^{(W)}_\alpha$ is strong for $\alpha > 0$ follows
from $H_W \in \BLO(\Xscr)$ as explained in
Theorem~\ref{SecondConservativeMajorationThm}.

  Claims \eqref{WebsterNodePropClaim2} and
  \eqref{WebsterNodePropClaim3} follow from
  Proposition~\ref{SolvabilityProp} and Eq.~\eqref{ConsCondDiff}.
\end{proof}

\section{\label{WaveSection} Passive wave equation on wave guides}

Define the tubular domain $\Omega \subset \R^3$ and its boundary
components $\Gamma$, $\Gamma(0)$, and $\Gamma(1)$ as in Section
\ref{IntroSec}.  Each of the sets $\Gamma$, $\Gamma(0)$, and
$\Gamma(1)$ are smooth manifolds but $\partial \Omega =
\closure{\Gamma} \cup \Gamma(0) \cup \Gamma(1)$ is only Lipschitz.
Other relevant properties of $\Omega$ and $\partial \Omega$ are listed
in \eqref{StandingAss1} -- \eqref{StandingAss3} of
Appendix~\ref{SoboAppendix} where we also make rigorous sense of the
Sobolev spaces, boundary trace mappings, Poincar\'e inequality, and
the Green's identity for such domains.

Following \cite[Section 3]{L-M:WECAD}, we consider the linear
dynamical system described by 
\begin{equation}\label{WaveEq}
\begin{cases}
  &  \phi_{tt}(\br, t) = c^2 \Delta \phi(\br, t) \quad
  \text{ for } \br \in \Omega \text{ and } t \in \rplus, \\
  &   c \frac{\partial \phi}{\partial
    \bnu}(\br, t) + \phi_t(\br, t) = 2 \sqrt{\tfrac{c}{\rho A(0)}} \, u(\br, t) \quad
  \text{ for } \br \in \Gamma(0) \text{ and }  t \in \rplus, \\
    &  \phi(\br, t)  = 0 \quad \text{ for } \br \in \Gamma(1) \text{ and }
  t \in \rplus, \\
  & \frac{\partial \phi}{\partial \bnu}(\br, t) + \alpha \phi_t(\br, t)  = 0 \quad \text{ for } \br
  \in \Gamma, \text{ and }
  t \in \rplus, \text{ and } \\
  & \phi(\br,0)  = \phi_0(\br), \quad \rho \phi_t(\br,0) = p_0(\br) \quad \text{ for
  } \br \in \Omega,
\end{cases}
\end{equation}
together with the observation $y$ defined by
\begin{equation}\label{WaveEqObs}
    c \frac{\partial \phi}{\partial
    \bnu} (\br, t) - \phi_t(\br, t)  =  2 \sqrt{\tfrac{c}{\rho A(0)}} \, y(\br, t)  \quad
  \text{ for }  \br \in \Gamma(0) \text{ and }  t \in \rplus.
\end{equation}
This model describes acoustics of a cavity $\Omega$ that has an open
end at $\Gamma(1)$ and an energy dissipating wall $\Gamma$. The
solution $\phi$ is the \emph{velocity potential} as its gradient is
the perturbation velocity field of the acoustic waves.  The boundary
control and observation on surface $\Gamma(0)$ (whose area is $A(0)$)
are both of scattering type. The speed of sound is denoted by $c > 0$.
The constants $\alpha \geq 0$ and $\rho > 0$ have physical meaning but
we refer to \cite{L-M:WECAD} for details.  Note that if $\alpha = 0$,
we have the Neumann boundary condition modelling a hard, sound
reflecting boundary on $\Gamma$.  Our purpose is to show that
\eqref{WaveEq}--\eqref{WaveEqObs} defines a passive boundary node
(conservative, if $\alpha = 0$ by a slightly different argument in
Corollary~\ref{nDimWaveConsThmCor}) by using
Theorem~\ref{FirstConservativeMajorationThm} with the aid of the
additional signals $\tilde u := \frac{1}{\sqrt{\alpha}} \frac{\partial
  \phi}{\partial \bnu} + \sqrt{\alpha} \phi_t$ (that will be grounded)
and $\tilde y := \frac{1}{\sqrt{\alpha}} \frac{\partial \phi}{\partial
  \bnu} - \sqrt{\alpha} \phi_t$ (that will be disregarded) on the wall
$\Gamma$.

The boundedness of the Dirichlet trace
implies that the space
\begin{equation} \label{HG0Def}
  H_{\Gamma(1)}^{1}(\Omega) := \left \{ f \in
    H^{1}(\Omega): f\rst{\Gamma(1)} = 0 \right \}.
\end{equation}
is a closed subspace of $H^{1}(\Omega)$. Define
\begin{equation} \label{Z0Def} 
  \tilde \Zscr' := \{ f \in H_{\Gamma(1)}^{1}(\Omega): \Delta f \in
  L^{2}(\Omega), \frac{\partial f}{ \partial \bnu}\rst{\Gamma(0) \cup
    \Gamma} \in L^{2}(\Gamma(0) \cup \Gamma) \}
\end{equation}
with the norm $ \| f \|_{\tilde \Zscr'}^{2} = \| f
\|_{H^{1}(\Omega)}^{2} + \| \Delta f \|_{L^{2}(\Omega)}^{2} +
\|\tfrac{\partial f}{ \partial \bnu}\rst{\Gamma(0) \cup \Gamma}
\|_{L^{2}(\Gamma(0) \cup \Gamma)}^{2}$.  Then the operator
\begin{equation}\label{NeumannTraceBndProp}
 \frac{\partial}{\partial \bnu}\rst{\Gamma'}:f \mapsto \frac{\partial
  f}{\partial \bnu}\rst{\Gamma'} \quad\text{lies in}\quad \BLO(\tilde \Zscr'
;L^{2}(\Gamma')) \quad\text{for}\quad
\Gamma' \in \{ \Gamma(0), \Gamma, \Gamma(0) \cup \Gamma \}.
\end{equation}

The spaces $\tilde \Zscr$, $\Xscr$, and the interior operator $L$ are
defined by
\begin{equation} \label{RestDefined}
\begin{aligned} 
  L := \sbm{0 & \rho^{-1} \\ \rho c^2 \Delta & 0}& :\tilde \Zscr \to \Xscr \quad  \text{ with } \\
  \tilde \Zscr := \tilde \Zscr' \times H_{\Gamma(1)}^{1}(\Omega) & \quad \text{ and } \quad 
   \Xscr := H_{\Gamma(1)}^{1}(\Omega) \times L^{2}(\Omega) 
\end{aligned}
\end{equation} 
where $H_{\Gamma(1)}^{1}(\Omega)$ and $\tilde \Zscr'$ are given by
\eqref{HG0Def} and \eqref{Z0Def}.  For the space $\Xscr$, we use the
\emph{energy norm}
\begin{equation} \label{EnergyNormEq} 
  \| \sbm{z_1 \\ z_2} \|_\Xscr^2 :=
  \frac{1}{2}\left ( \rho  \| |\nabla z_1| \|_{L^2(\Omega)}^2 
    + \frac{1}{\rho c^2}\|  z_2 \|_{L^2(\Omega)}^2 \right ).
\end{equation} 
The Poincar\'e inequality $\|z_1\|_{L^{2}(\Omega)} \leq M_{\Omega} \|
\nabla z_1 \|_{L^{2}(\Omega)}$ holds for $z_1 \in
H_{\Gamma(1)}^1(\Omega)$ as given in Theorem~\ref{PoincareInEqThm} in
Appendix~\ref{SoboAppendix}.  Therefore \eqref{EnergyNormEq} defines a
norm on $\Xscr$, equivalent to the Cartesian product norm
\begin{equation*}
  \| \sbm{z_1 \\ z_2} \|_{H^{1}(\Omega) \times L^{2}(\Omega)}^2 :=
  \|z_1\|_{L^{2}(\Omega)}^2  + 
  \| \nabla z_1 \|_{L^2(\Omega)}^2 + \|  z_2 \|_{L^2(\Omega)}^2
\end{equation*}
so that $\tilde \Zscr \subset \Xscr$ with a continuous embedding, and $L \in
\BLO(\tilde \Zscr;\Xscr)$ with respect to the $\tilde \Zscr$-norm
\begin{equation*}
  \| \sbm{z_1 \\ z_2} \|_{\tilde \Zscr}^2 :=
  \| z_1 \|_{\tilde \Zscr'}^2 + \|  z_2 \|_{L^2(\Omega)}^2 + 
  \| \nabla z_2 \|_{L^2(\Omega)}^2. 
\end{equation*} 
Defining $\Uscr := L^{2}(\Gamma(0))$ and $\tilde \Uscr :=
L^{2}(\Gamma)$ with the norms
\begin{equation}
  \label{WaveEqInputNorm}
  \norm{u_0}_{\Uscr}^2 = A(0)^{-1} \norm{u_0}_{L^{2}(\Gamma(0))}^2
  \text{ and } \norm{\tilde u_0}_{\tilde \Uscr} = \norm{\tilde
    u_0}_{L^{2}(\Gamma)},
\end{equation}
we get $\Uscr \oplus \tilde \Uscr = L^2(\Gamma(0) \cup \Gamma)$ where we use
the Cartesian product norm of $\Uscr$ and $\tilde \Uscr$.


The boundedness of the Dirichlet trace and the property
\eqref{NeumannTraceBndProp} of the Neumann trace imply that $\sbm{G \\
  G_\alpha} \in \BLO(\tilde \Zscr;\Uscr \oplus \tilde \Uscr)$ and
$\sbm{K \\ K_\alpha} \in \BLO(\tilde \Zscr;\Uscr \oplus \tilde \Uscr)$
where
\begin{equation}\label{GKOpDefinition}
  \begin{aligned}
  & \bbm{G \\ G_\alpha } \bbm{z_1 \\ z_2} := \frac{1}{2} \bbm{
    \sqrt{\frac{A(0)}{\rho c}} \left ( \rho c
    \frac{\partial z_1}{\partial \bnu}\rst{\Gamma(0)} +
    z_2\rst{\Gamma(0)} \right ) \\ 
\frac{\sqrt{\rho}}{\sqrt{\alpha}}
    \frac{\partial z_1}{\partial \bnu}\rst{\Gamma} + \frac{\sqrt{\alpha}}{\sqrt{\rho}}
    z_2\rst{\Gamma} } \quad \text{ and } \quad \\ & \bbm{K \\ K_\alpha
  } \bbm{z_1 \\ z_2} := \frac{1}{2} \bbm{ \sqrt{\frac{A(0)}{\rho c}}
    \left ( \rho c \frac{\partial z_1}{\partial \bnu}\rst{\Gamma(0)} -
    z_2\rst{\Gamma(0)} \right ) \\
 \frac{\sqrt{\rho}}{\sqrt{\alpha}}
    \frac{\partial z_1}{\partial \bnu}\rst{\Gamma} - \frac{\sqrt{\alpha}}{\sqrt{\rho}}
    z_2\rst{\Gamma}  } .
\end{aligned}
\end{equation}

The reason for defining the triple $\widetilde \Xi_\alpha := (\sbm{G
  \\ G_\alpha}, L, \sbm{K \\ K_\alpha})$ is to obtain first order
equations from \eqref{WaveEq}, using the equivalence of $\phi_{tt} =
c^2 \Delta \phi$ and $\frac{d}{dt} \sbm{\phi \\ p} = \sbm{0 &
  \rho^{-1} \\ \rho c^2 \Delta & 0} \sbm{\phi \\ p} $ where $p = \rho
\phi_t$ is the sound pressure. More precisely, equations
\eqref{WaveEq}--\eqref{WaveEqObs} are (at least formally) equivalent with
\begin{align} \label{ExtendedBdDiffEq}
  \begin{cases}
   & \frac{d}{dt} \bbm{\phi(t) \\ p(t)}  = L \bbm{\phi(t) \\ p(t)},  \\
   & \bbm{u(t) \\ 0}  = \bbm{G \\ G_\alpha}\bbm{\phi(t) \\ p(t)}, 
 \end{cases}
\end{align}
and
\begin{equation}
  \label{ExtendedBdDiffEqObs}
  \bbm{y(t) \\ \tilde y(t)}  = \bbm{ K \\ K_\alpha} \bbm{\phi(t) \\ p(t)}
\end{equation}
for $t \in \rplus$, with the initial conditions $\sbm{\phi(0) \\ p(0)}
= \sbm{\phi_0 \\ p_0}$.  The \emph{Green--Lagrange identity}
\begin{equation}\label{ConsBdIdent}
 2 \Re \ipd{\sbm{z_1 \\ z_2}}{L \sbm{z_1 \\ z_2} }_\Xscr + \norm{ \sbm{K \\ K_\alpha} \sbm{z_1 \\ z_2}}_{\Uscr
   \oplus \tilde \Uscr}^2 = \norm{\sbm{G \\ G_\alpha }\sbm{z_1 \\ z_2}}_{\Uscr \oplus
   \tilde \Uscr}^2 \text{ for all } \sbm{z_1 \\ z_2}
 \in \tilde \Zscr
\end{equation}
is a key fact for proving the conservativity of $\widetilde
\Xi_\alpha$, and we verify it next.  Green's identity
(Theorem~\ref{GreensIdentityThm} in Appendix~\ref{SoboAppendix}) gives
\begin{equation} \label{nDimWaveConsPropEq1} 
\begin{aligned} 
  & 2 \Re \left < \sbm{z_1 \\ z_2} , L \sbm{z_1 \\ z_2} \right >_\Xscr = 2 \Re \left < \sbm{z_1
    \\ z_2} , \sbm{ \rho^{-1} z_2 \\ \rho c^2 \Delta z_1} \right
  >_\Xscr \\ & = 2 \Re \frac{1}{2} \left ( \rho \int_{\Omega}{ \nabla
    \overline{z_1} \cdot \nabla (z_2/\rho) \, \dif V} + \frac{1}{\rho
    c^2} \left < \rho c^2 \Delta \overline{z_1},z_2 \right
  >_{L^2(\Omega)} \right ) \\ & = \Re \left ( \int_{\Gamma(0) \cup
    \Gamma \cup \Gamma(1)} \frac{\partial\overline{ z_1} }{\partial
    \bnu} z_2 \, \dif A \right ) \\ & = \Re \left < \frac{\partial
    z_1 } {\partial \bnu}\rst{\Gamma(0) }, z_2 \rst{\Gamma(0)} \right
  >_{L^2(\Gamma(0)) } + \Re \left < \frac{\partial z_1 } {\partial
    \bnu}\rst{\Gamma }, z_2 \rst{\Gamma} \right >_{L^2(\Gamma) }
\end{aligned}
\end{equation}
because $z_2\rst{\Gamma(1)} = 0$ by \eqref{RestDefined}.  On the other
hand, we obtain
    \begin{align} \label{nDimWaveConsPropEq2}
      & \norm{G \sbm{z_1 \\ z_2}}_{\Uscr}^2 = A(0)^{-1} \left < G \sbm{z_1 \\ z_2},
        G \sbm{z_1 \\ z_2} \right >_{L^2(\Gamma(0))} \\
      & = \frac{1}{4 \rho c} \left (\rho^2 c^2 \left \| \frac{\partial z_1 }
          {\partial \bnu}\rst{\Gamma(0)} \right \|_{L^2(\Gamma(0))}^2 +
        2 \rho c \, \Re \left < \frac{\partial z_1 } {\partial
            \bnu}\rst{\Gamma(0)}, z_2 \rst{\Gamma(0)} \right
        >_{L^2(\Gamma(0))} + \left \| z_2 \rst{\Gamma(0)} \right
        \|_{L^2(\Gamma(0))}^2 \right ) \nonumber
  \end{align}
  and also
    \begin{align} \label{nDimWaveConsPropEq3}
      & \norm{K \sbm{z_1 \\ z_2}}_{\Uscr}^2 = A(0)^{-1} \left < K \sbm{z_1 \\ z_2},
        K \sbm{z_1 \\ z_2} \right >_{L^2(\Gamma(0))} \\
      & = \frac{1}{4\rho c}  \left (\rho^2 c^2 \left \| \frac{\partial z_1 } {\partial
          \bnu}\rst{\Gamma(0)} \right \|_{L^2(\Gamma(0))}^2 
      -  2 \rho c \, \Re \left < \frac{\partial z_1 } {\partial
          \bnu}\rst{\Gamma(0)}, z_2 \rst{\Gamma(0)} \right >_{L^2(\Gamma(0))} 
      +  \left \| z_2 \rst{\Gamma(0)} \right \|_{L^2(\Gamma(0))}^2 \right ),
      \nonumber
  \end{align}
  where $G \sbm{z_1 \\ z_2}$ and $K \sbm{z_1 \\ z_2}$ are the first
  components in \eqref{GKOpDefinition} respectively.

Similarly, we compute the two terms needed in
    \begin{align} \label{nDimWaveConsPropEq4}
       & \norm{G_\alpha \sbm{z_1 \\ z_2}}_{\tilde \Uscr}^2 - \norm{K_\alpha \sbm{z_1 \\ z_2}}_{\tilde
         \Uscr}^2 \\ & = \left < G_\alpha \sbm{z_1 \\ z_2}, G_\alpha
       \sbm{z_1 \\ z_2} \right >_{L^2(\Gamma)} - \left < K_\alpha
       \sbm{z_1 \\ z_2}, K_\alpha \sbm{z_1 \\ z_2} \right
       >_{L^2(\Gamma)}  = \Re \left < \frac{\partial z_1 }
       {\partial \bnu}\rst{\Gamma}, z_2 \rst{\Gamma} \right
       >_{L^2(\Gamma )}, \nonumber
  \end{align}
  where $G_\alpha \sbm{z_1 \\ z_2}$ and $K_\alpha \sbm{z_1 \\ z_2}$
  are the second components in \eqref{GKOpDefinition} respectively.
  Now \eqref{nDimWaveConsPropEq1} -- \eqref{nDimWaveConsPropEq4}
  implies \eqref{ConsBdIdent} as required.
  
  We proceed to show that the the triple $\Xi_\alpha :=
  (G\rst{\Zscr_{\alpha}},L\rst{\Zscr_{\alpha}},K\rst{\Zscr_{\alpha}})$
  for all $\alpha > 0$ is a scattering passive boundary node on
  Hilbert spaces $(\Uscr,\Xscr,\Uscr)$ with the solution space
  \begin{equation} \label{FinalZ0Def} 
    \Zscr_{\alpha} := \left \{ \bbm{z_1 \\ z_2 } \in \tilde \Zscr' \times H_{\Gamma(1)}^{1}(\Omega):  
    \frac{\partial z_1}{ \partial \bnu}\rst{\Gamma} + \frac{\alpha}{\rho} z_2 \rst{\Gamma} = 0 \right \}.
  \end{equation}
  Note that $\Zscr_{\alpha}$ is a closed subspace of $\tilde \Zscr$
  because $G_\alpha \in \BLO(\tilde \Zscr;\tilde \Uscr)$ and $\Zscr_{\alpha} =
  \Null{G_\alpha}$. Therefore, we can use the norm of $\tilde \Zscr$
  on $\Zscr_{\alpha}$.  The conservative case $\alpha = 0$ is slightly
  different, and it is treated separately in
  Corollary~\ref{nDimWaveConsThmCor}.
\begin{theorem}
  \label{nDimWaveConsThm}
  Take $\alpha > 0$ and let the operators $L$, $G$, $K$, and Hilbert
  spaces $\Xscr$, $\Uscr$, and $\Zscr_{\alpha}$ be defined as above.
  Let $\sbm{\phi_0 \\ p_0} \in \Zscr_{\alpha}$ and $u \in
  C^2(\rplus;\Uscr)$ such that the compatibility condition
  $G\sbm{\phi_0 \\ p_0} = u(0)$ holds. Then the following holds:
  \begin{enumerate}
  \item \label{WaveNodePropClaim1} The triple $\Xi_\alpha :=
    (G\rst{\Zscr_{\alpha}},L\rst{\Zscr_{\alpha}},K\rst{\Zscr_{\alpha}})$
    is a scattering passive boundary node on Hilbert spaces
    $(\Uscr,\Xscr,\Uscr)$ with solution space $\Zscr_{\alpha}$.  The
    semigroup generator $A_\alpha = L\rst{\Null{G} \cap \Null{
        G_\alpha}}$ of $\Xi_\alpha$ satisfies $A_\alpha^* =
    -L\rst{\Null{K} \cap \Null{K_\alpha}}$ and $0 \in \rho(A_\alpha)
    \cap \rho(A_\alpha^*)$.
  \item \label{WaveNodePropClaim2} The equations\footnote{Note that
    \eqref{BckGrndDynEq} is equivalent with \eqref{WaveEq} and
    \eqref{ExtendedBdDiffEq} in the context of this theorem. }  in
    \eqref{ExtendedBdDiffEq} have a unique solution $\sbm{\phi \\ p }
    \in C^1(\rplus;\Xscr) \cap C(\rplus;\Zscr_{\alpha})$.  Hence we
    can define $y \in C(\rplus;\Uscr)$ by equation
    \eqref{ExtendedBdDiffEqObs}.
  \item \label{WaveNodePropClaim3} The solution of
    \eqref{ExtendedBdDiffEq} satisfies the energy dissipation
    inequality
    \begin{equation} \label{WaveEnergyBalance}
      \frac{d}{dt}\norm{\sbm{\phi(t) \\ p(t)}}_\Xscr^2  
      \leq \norm{u(t)}_\Uscr^2 - \norm{y(t)}_\Uscr^2,
 \quad t \in \rplus.
  \end{equation}
\end{enumerate}
\end{theorem}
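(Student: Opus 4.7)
The plan is to apply Theorem~\ref{FirstConservativeMajorationThm} to the extended triple $\widetilde \Xi_\alpha := (\sbm{G \\ G_\alpha}, L, \sbm{K \\ K_\alpha})$ on $(\Uscr \oplus \tilde \Uscr, \Xscr, \Uscr \oplus \tilde \Uscr)$, which presupposes that $\widetilde \Xi_\alpha$ is itself scattering conservative. This I would verify via Proposition~\ref{CheckingPassivityProp}: the Green--Lagrange identity \eqref{ConsBdIdent} is already in place, so what remains is the surjectivity of $\sbm{G \\ G_\alpha}$ and $\sbm{K \\ K_\alpha}$ onto $\Uscr \oplus \tilde \Uscr$, together with the range conditions $(\beta - L)\Null{\sbm{G \\ G_\alpha}} = \Xscr$ and $(\gamma + L)\Null{\sbm{K \\ K_\alpha}} = \Xscr$ for some $\beta, \gamma \in \cplus$.

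For surjectivity I would first choose $z_1$ with prescribed Neumann trace on $\Gamma(0) \cup \Gamma$ subject to $z_1|_{\Gamma(1)} = 0$ and then choose $z_2 \in H_{\Gamma(1)}^1(\Omega)$ with compensating Dirichlet trace. Each of the range conditions, after eliminating $z_2 = \rho(\beta z_1 - w_1)$ (or the analogous expression with $\gamma$), reduces to an elliptic Helmholtz-type boundary value problem for $z_1$ with Dirichlet condition on $\Gamma(1)$ and Robin conditions $c\, \partial z_1/\partial \bnu + \beta z_1 = w_1$ on $\Gamma(0)$ and $\partial z_1/\partial \bnu + \alpha \beta z_1 = \alpha w_1$ on $\Gamma$. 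For $\beta$ real and positive, the associated sesquilinear form on $H_{\Gamma(1)}^1(\Omega)$ is coercive by the Poincar\'e inequality (Theorem~\ref{PoincareInEqThm}), so Lax--Milgram yields a unique weak solution $z_1 \in H_{\Gamma(1)}^1(\Omega)$. The weak formulation forces $\Delta z_1 \in L^2(\Omega)$, and the trace theory of Appendix~\ref{SoboAppendix} together with the Robin relation forces $\partial z_1/\partial \bnu \in L^2(\Gamma(0) \cup \Gamma)$, placing $z_1$ in $\tilde \Zscr'$.

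With $\widetilde \Xi_\alpha$ conservative, Theorem~\ref{FirstConservativeMajorationThm} immediately yields the passivity part of claim~\eqref{WaveNodePropClaim1} together with $A_\alpha = L\rst{\Null{G} \cap \Null{G_\alpha}}$. The adjoint identity $A_\alpha^* = -L\rst{\Null{K} \cap \Null{K_\alpha}}$ follows from the conservativity of $\widetilde \Xi_\alpha$: the dual node coincides with the time-flow inverse $\widetilde \Xi_\alpha^\TF = (\sbm{K \\ K_\alpha}, -L, \sbm{G \\ G_\alpha})$, and applying Theorem~\ref{FirstConservativeMajorationThm} to the $\Null{G_\alpha}$-restriction of that inverse identifies its generator as $-L\rst{\Null{K} \cap \Null{K_\alpha}}$. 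For $0 \in \rho(A_\alpha) \cap \rho(A_\alpha^*)$ I would solve $L \sbm{z_1 \\ z_2} = \sbm{w_1 \\ w_2}$ on $\Null{G} \cap \Null{G_\alpha}$, which reduces to a Poisson problem for $z_1$ with mixed Neumann data on $\Gamma(0) \cup \Gamma$ and Dirichlet on $\Gamma(1)$, uniquely solvable by Lax--Milgram as before. Claims~\eqref{WaveNodePropClaim2} and \eqref{WaveNodePropClaim3} then follow from Proposition~\ref{SolvabilityProp} and the passivity inequality \eqref{ConsCondDiff}.

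The principal obstacle I anticipate is the regularity bookkeeping in the range steps: since $\Omega$ is only Lipschitz along the edges where $\Gamma$, $\Gamma(0)$, and $\Gamma(1)$ meet, global $H^2$ regularity is unavailable, so I cannot simply quote an elliptic regularity theorem to place the weak solution $z_1$ in $\tilde \Zscr'$. Instead the membership must be extracted directly from the weak formulation (which gives $\Delta z_1 \in L^2(\Omega)$ once the strong form of the PDE is read off against test functions vanishing on $\partial \Omega$) and from property \eqref{NeumannTraceBndProp} applied to the Robin identities that $z_1$ inherits on $\Gamma(0)$ and $\Gamma$. Once this technical passage is executed, the rest of the argument is structurally parallel to the proof of Theorem~\ref{WebsterNodeThm}, with the $\Null{G_\alpha}$-restriction playing the role of the dissipative boundary mechanism captured abstractly by Theorem~\ref{FirstConservativeMajorationThm}.
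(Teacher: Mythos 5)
Your proposal is correct and follows essentially the same route as the paper: conservativity of the extended node $\widetilde\Xi_\alpha=(\sbm{G\\ G_\alpha},L,\sbm{K\\ K_\alpha})$ is verified through Proposition~\ref{CheckingPassivityProp} (the Green--Lagrange identity \eqref{ConsBdIdent} being already available), Theorem~\ref{FirstConservativeMajorationThm} then yields the passive node $\Xi_\alpha$ with generator $L\rst{\Null{G}\cap\Null{G_\alpha}}$ and the adjoint identified via the time-flow inverse, and claims \eqref{WaveNodePropClaim2}--\eqref{WaveNodePropClaim3} come from Proposition~\ref{SolvabilityProp} and \eqref{ConsCondDiff}. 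The only deviation is cosmetic: you check the range conditions at real $\beta,\gamma>0$ through coercive Robin problems, whereas the paper takes $\beta=\gamma=0$ (a mixed Poisson problem that simultaneously gives $0\in\rho(A_\alpha)$) and uses openness of the resolvent set to pass to positive $\beta,\gamma$; also note that in the surjectivity step the Neumann data of $z_1$ alone can carry the whole target (take $z_2=0$), so no ``compensating'' Dirichlet trace of $z_2$ is needed --- nor could it absorb general $L^2$ data, since Dirichlet traces only fill $H^{1/2}$.
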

\noindent It follows from claim \eqref{WaveNodePropClaim2} and the
definition of the norms of $\Zscr_{\alpha}$ and $\Xscr$ that $\phi \in
C^1(\rplus; H^1(\Omega)) \cap C^2(\rplus; L^2(\Omega))$, $\nabla \phi
\in C^1(\rplus;L^2(\Omega;\R^3))$, and $\Delta \phi \in
C(\rplus;L^2(\Omega))$. These are the same smoothness properties that
have been used in \cite[see, in particular, Eq.~(1.4)]{L-M:WECAD} for
deriving the generalised Webster's equation in \eqref{IntroWebstersEq}
from the wave equation.
\begin{proof}
  Claim \eqref{WaveNodePropClaim1}: By
  Theorem~\ref{FirstConservativeMajorationThm} and the discussion
  preceding this theorem, it is enough to show that $\widetilde
  \Xi_\alpha = (\sbm{G \\ G_\alpha}, L, \sbm{K \\ K_\alpha})$
  introduced above is a conservative boundary node which is easiest
  done by using Proposition~\ref{CheckingPassivityProp}.  Since the
  Green--Lagrange identity \eqref{BackgroundG-L} has already been
  established, it remains to prove conditions \eqref{PassCharCond2}
  (with $\sbm{G \\ G_\alpha}$ in place of $G$) and
  \eqref{PassCharCond4} (with $\sbm{K \\ K_\alpha}$ in place of $K$)
  of Proposition~\ref{CheckingPassivityProp} with $\beta = \gamma =
  0$.  It is enough to consider only $\beta = \gamma = 0$ because the
  resolvent sets of $L\rst{\Null{G}}$ and $-L\rst{\Null{K}}$ in
  Proposition~\ref{CheckingPassivityProp} are open, and then the same
  conditions hold for some $\beta,\gamma > 0$ as well.
  
  For an arbitrary $g \in L^2(\Gamma(0)\cup \Gamma)$ there exists a
  unique variational\footnote{We leave it to the interested reader to
    derive the variational form using Green's identity
    \eqref{GreenIdentity} and then carry out the usual argument by the
    Lax--Milgram theorem; see, e.g., \cite[Lemma~2.2.1.1]{PG:EPNSD}.} solution $z_1 \in H^1_{\Gamma(1)}(\Omega)$
  of the problem
   \begin{equation}\label{UniqueSolvLaplacian}
     \Delta z_1 = 0, \quad z_1\rst{ \Gamma(1)} = 0,  \quad
     \frac{\partial z_1}{ \partial \nu}\rst{\Gamma(0) \cup \Gamma} = g.
   \end{equation}
   Since $z_1 \in \tilde \Zscr'$, we have $\tfrac{\partial}{\partial
     \nu}\rst{\Gamma(0) \cup \Gamma} \tilde \Zscr' = L^{2}(\Gamma(0)
   \cup \Gamma)$ which obviously gives both $\tfrac{\partial}{\partial
     \nu}\rst{\Gamma(0)} \tilde \Zscr' = L^{2}(\Gamma(0))$ and
   $\tfrac{\partial}{\partial \nu}\rst{\Gamma} \tilde\Zscr' =
   L^{2}(\Gamma)$.  Clearly $\tilde\Zscr' \oplus \{ 0 \} \subset
   \tilde \Zscr$ and the surjectivity of $\sbm{G\\ G_\alpha}$ follows
   from $$\bbm{G \\ G_\alpha } \bbm{z_1 \\ 0} := \frac{1}{2} \bbm{
     \sqrt{A(0)\rho c}
     \frac{\partial }{\partial \bnu} \rst{\Gamma(0)} \\
     \frac{\sqrt{\rho}}{\sqrt{\alpha}} \frac{\partial }{\partial
       \bnu}\rst{\Gamma}} z_1.$$ To see this, for a given $h\in
   L^2(\Gamma(0)\cup \Gamma)$, we choose
   \begin{equation*}
     g=
     \begin{cases}
       2\frac{1}{\sqrt{A(0)\rho c}}h, & \text{on }\Gamma(0),\\
       2\frac{\sqrt{\alpha}}{\sqrt{\rho}} h, &\text{on }\Gamma
     \end{cases}
   \end{equation*}
   in \eqref{UniqueSolvLaplacian} to find a function $z_1$ so that
   $\sbm{G \\ G_\alpha } \sbm{z_1 \\ 0}=h$. The surjectivity of $\sbm{K \\
     K_\alpha}$ is proved similarly.

   To show that $L \Null{\sbm{G\\ G_\alpha}} = L \left (
     \Null{G} \cap \Null{G_\alpha} \right ) = \Xscr$,  let $\sbm{w_1 \\
     w_2} \in \Xscr$ be arbitrary.  Then $\sbm{w_1 \\ w_2}
   = L \sbm{z_1 \\ z_2} = \sbm{\rho^{-1} z_2 \\ \rho c^2 \Delta z_1}$ for $\sbm{z_1 \\
     z_2} \in \Null{G} \cap \Null{G_\alpha}$ if and only if $z_2 =
   \rho w_1$ and the variational solution $z_1 \in
   H_{\Gamma(1)}^{1}(\Omega)$ of the problem
   \begin{equation*}
     \rho c^2 \Delta z_1 = w_2, \quad
     z_1\rst{\Gamma(1)} = 0, \quad
     \frac{\partial z_1}{ \partial \nu}\rst{\Gamma} = - \alpha \rho w_1 \rst{\Gamma},  \quad
     c \frac{\partial z_1}{ \partial \nu}\rst{\Gamma(0)} 
     = -  w_1 \rst{\Gamma(0)}
   \end{equation*} 
   exists and belongs to the space $\Zscr'$. Now, this condition can
   be verified by standard variational techniques because $w_2 \in
   L^2(\Omega)$ and $w_1 \in H_{\Gamma(1)}^{1}(\Omega)$ which implies
   $w_1 \rst{\Gamma(0) \cup \Gamma} \in H^{1/2}(\Gamma(0) \cup \Gamma)
   \subset L^2(\Gamma(0) \cup \Gamma)$. That $L \Null{\sbm{K
       \\ K_\alpha}} = \Xscr$ is proved similarly.  All the conditions
   of Proposition~\ref{CheckingPassivityProp} are now satisfied with
   $\beta = \gamma = 0$, and thus $\tilde \Xi_\alpha$ is a
   conservative boundary node. It now follows from
   Theorem~\ref{FirstConservativeMajorationThm} that $\Xi_\alpha$ is a
   passive boundary node which has the common semigroup generator
   $A_\alpha = L\rst{\Null{G} \cap \Null{G_\alpha}}$ with the original
   conservative boundary node $\tilde \Xi_\alpha$.  By \cite[Theorem
     1.9 and Proposition~4.3]{M-S:CBCS}, the dual system of
   $\widetilde \Xi_\alpha$ is of boundary control type, and it
   coincides with the time-flow inverted boundary node $\widetilde
   \Xi_\alpha^\TF$. Now, the unbounded adjoint $A_\alpha^*$ is the
   semigroup generator of the dual system $\widetilde \Xi_\alpha^\TF$,
   and hence $A_\alpha^* = -L\rst{\Null{K} \cap \Null{K_\alpha}}$ as
   claimed.

   It remains to show that $0 \notin \sigma(A_\alpha)$. We have
   already shown above that $A_\alpha \Dom{A_\alpha} = \Xscr$ with
   $\Dom{A_\alpha} = \Null{G} \cap \Null{G_\alpha}$, and the remaining
   injectivity part follows if we show that $\Null{L} \cap \Null{G}
   \cap \Null{G_\alpha} = \{ 0 \}$. This follows because the variational
solution in $H^1(\Omega)$ of the homogenous problem
  \begin{equation*}
     \Delta z_1 = 0, \quad
     z_1\rst{\Gamma(1)} = 0, \quad
     \frac{\partial z_1}{ \partial \nu}\rst{\Gamma(0) \cup \Gamma} = 0  
   \end{equation*} 
  is unique.  That $0 \notin \sigma(A_\alpha^*)$ follows similarly by
  considering the time-flow inverted system $\widetilde \Xi_\alpha^\TF$
  instead.

 Claims \eqref{WaveNodePropClaim2} and \eqref{WaveNodePropClaim3}:
 Since scattering passive boundary nodes are internally well-posed, it
 follows from, e.g., \cite[Lemma 2.6]{M-S:CBCS} that equations
 \eqref{BckGrndDynEq} are solvable as has been explained in
 Section~\ref{BackgroundSection}.
\end{proof}
\begin{corollary}
  \label{nDimWaveConsThmCor}
  Use the same notation and make the same assumptions as in
  Theorem~\ref{nDimWaveConsThm}.  If $\alpha = 0$, then claims
  \eqref{WaveNodePropClaim1} --- \eqref{WaveNodePropClaim3} of
  Theorem~\ref{nDimWaveConsThm} hold in the stronger form: {\rm (i')}
  the triple $\Xi_0 := (G\rst{\Zscr_0},L\rst{\Zscr_0},K\rst{\Zscr_0})$
  is a scattering conservative boundary node on Hilbert spaces
  $(\Uscr,\Xscr,\Uscr)$ with the solution space $\Zscr_0 := \tilde
  \Zscr'_0 \times H_{\Gamma(1)}^{1}(\Omega)$ where
\begin{equation} \label{Z00Def} 
  \tilde \Zscr'_0 := \{ f \in H_{\Gamma(1)}^{1}(\Omega): \Delta f \in
  L^{2}(\Omega), \frac{\partial f}{ \partial \bnu}\rst{\Gamma(0) } \in L^{2}(\Gamma(0)),  \frac{\partial f}{ \partial \bnu}\rst{\Gamma } = 0\};
\end{equation}
and {\rm (iii')} the energy inequality \eqref{WaveEnergyBalance} holds
as an equality.  
\end{corollary}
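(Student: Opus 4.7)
The plan is to verify directly the four conditions of Proposition~\ref{CheckingPassivityProp} for conservativity of $\Xi_0 := (G\rst{\Zscr_0}, L\rst{\Zscr_0}, K\rst{\Zscr_0})$. Theorem~\ref{FirstConservativeMajorationThm} cannot be invoked in the same manner as for $\alpha > 0$, because $G_\alpha$ in \eqref{GKOpDefinition} is singular at $\alpha = 0$. However, since the homogeneous Neumann condition $\frac{\partial z_1}{\partial \bnu}\rst{\Gamma} = 0$ built into $\tilde \Zscr'_0$ in \eqref{Z00Def} suppresses the boundary dissipation channel entirely, the direct verification is actually simpler than the corresponding argument for $\alpha > 0$.

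For the Green--Lagrange identity \eqref{BackgroundG-L}, I would reuse the computation \eqref{nDimWaveConsPropEq1}--\eqref{nDimWaveConsPropEq3} unchanged. For $\sbm{z_1 \\ z_2} \in \Zscr_0 \subset \tilde \Zscr$, substituting $\frac{\partial z_1}{\partial \bnu}\rst{\Gamma} = 0$ into \eqref{nDimWaveConsPropEq1} annihilates the integral over $\Gamma$, leaving
\begin{equation*}
2 \Re \ipd{\sbm{z_1 \\ z_2}}{L \sbm{z_1 \\ z_2}}_\Xscr = \Re \left\langle \tfrac{\partial z_1}{\partial \bnu}\rst{\Gamma(0)}, z_2\rst{\Gamma(0)} \right\rangle_{L^2(\Gamma(0))},
\end{equation*}
which by \eqref{nDimWaveConsPropEq2}--\eqref{nDimWaveConsPropEq3} equals $\norm{G\sbm{z_1 \\ z_2}}_\Uscr^2 - \norm{K\sbm{z_1 \\ z_2}}_\Uscr^2$. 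This is exactly \eqref{BackgroundG-L} for $\Xi_0$.

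To establish conditions \eqref{PassCharCond2} and \eqref{PassCharCond4} of Proposition~\ref{CheckingPassivityProp} with $\beta = \gamma = 0$, I would solve four elliptic boundary-value problems of the type $\rho c^2 \Delta z_1 = f$ on $\Omega$, each with the common boundary conditions $z_1\rst{\Gamma(1)} = 0$ and $\frac{\partial z_1}{\partial \bnu}\rst{\Gamma} = 0$, and with either a prescribed right-hand side in $L^2(\Omega)$ or prescribed Neumann data on $\Gamma(0)$. For instance, to show $G\Zscr_0 = \Uscr$ given $h \in \Uscr$, I would solve $\Delta z_1 = 0$ together with $\frac{\partial z_1}{\partial \bnu}\rst{\Gamma(0)} = 2 h / \sqrt{A(0)\rho c}$ in $H^1_{\Gamma(1)}(\Omega)$ by Lax--Milgram, using the coercivity afforded by the Poincar\'e inequality of Theorem~\ref{PoincareInEqThm}, and set $z_2 = 0$. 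To show $L\Null{G\rst{\Zscr_0}} = \Xscr$ given $\sbm{w_1 \\ w_2} \in \Xscr$, I would set $z_2 = \rho w_1$ and solve $\rho c^2 \Delta z_1 = w_2$ with $c\frac{\partial z_1}{\partial \bnu}\rst{\Gamma(0)} = -w_1\rst{\Gamma(0)}$. The two $K$-analogues are identical modulo sign changes. Claims (ii) and (iii$'$) then follow as in Theorem~\ref{nDimWaveConsThm} from Proposition~\ref{SolvabilityProp} and Definition~\ref{ConsNodeDef}, noting that scattering conservativity forces the energy inequality \eqref{WaveEnergyBalance} to hold as an equality.

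The main technical obstacle is confirming that the variational solutions $z_1$ above actually lie in $\tilde \Zscr'_0$ in the sense of \eqref{Z00Def}, not merely in $H^1_{\Gamma(1)}(\Omega)$. This is however automatic from the weak formulation: by construction $\Delta z_1 \in L^2(\Omega)$ (it is either $0$ or $w_2/(\rho c^2)$), the Neumann trace on $\Gamma$ vanishes as imposed in the test space, and the Neumann trace on $\Gamma(0)$ coincides with the prescribed $L^2$-datum as a consequence of \eqref{NeumannTraceBndProp}. No elliptic regularity at the Lipschitz corners between $\Gamma$, $\Gamma(0)$, and $\Gamma(1)$ is required, only the Sobolev-space framework and Green's identity summarised in Appendix~\ref{SoboAppendix}.
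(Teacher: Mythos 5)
Your proposal is correct and follows essentially the same route as the paper: a direct verification of Proposition~\ref{CheckingPassivityProp} for $\Xi_0$ (bypassing $\widetilde\Xi_\alpha$, whose operators $G_\alpha$, $K_\alpha$ blow up as $\alpha\to 0$), obtaining the Green--Lagrange identity by dropping the $\Gamma$-term in \eqref{nDimWaveConsPropEq1} via the homogeneous Neumann condition in \eqref{Z00Def}, and settling the range conditions by the same variational elliptic arguments as in the proof of Theorem~\ref{nDimWaveConsThm}. Your explicit check that the variational solutions land in $\tilde\Zscr'_0$ merely spells out what the paper defers to the theorem's proof.
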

\noindent 
Claim \eqref{WaveNodePropClaim2} of Theorem~\ref{nDimWaveConsThm}
remains true without change.  Thus, the solution $\phi$ has the same
regularity properties as listed right after
Theorem~\ref{nDimWaveConsThm}.
\begin{proof}
  Because the operators $G_\alpha$ and $K_\alpha$ refer to
  $1/\sqrt{\alpha}$, we cannot simply set $\alpha = 0$ in the proof.
  This problem could be resolved by making the norm of $\tilde \Uscr$
  dependent on $\alpha$ which we want to avoid. A direct argument can
  be given without ever defining $\widetilde \Xi_\alpha$. To prove the
  Green--Lagrange identity
  \begin{equation}\label{ConservativeConsBdIdent}
    2 \Re \ipd{\sbm{z_1 \\ z_2}}{L \sbm{z_1 \\ z_2} }_\Xscr + \norm{ K \sbm{z_1 \\ z_2}}_{\Uscr}^2 
    = \norm{G \sbm{z_1 \\ z_2}}_{\Uscr}^2 \text{ for all } \sbm{z_1 \\ z_2}
    \in \tilde \Zscr_0
  \end{equation}
  for $\Xi_0$, one simply omits the last term on the right hand side
  of \eqref{nDimWaveConsPropEq1} by using the Neumann condition
  $\frac{\partial{ z_1} }{\partial \bnu} \rst{\Gamma} = 0$ from
  \eqref{Z00Def}.  Then \eqref{ConservativeConsBdIdent} follows from
  \eqref{nDimWaveConsPropEq1}---\eqref{nDimWaveConsPropEq3}, leading
  ultimately to \eqref{WaveEnergyBalance} with an equality. The
  remaining parts of claim {\rm (i')} follow by the argument given in
  the proof of Theorem~\ref{nDimWaveConsThm}.
\end{proof}
This result generalises the reflecting mirror example in
\cite[Section~5]{M-S:CBCS}, and further generalisations are given in
Section~\ref{ConclSec}.

\section{\label{ConclSec} Conclusions and generalisations}

We have given a unified treatment of a 3D wave equation model on
tubular structures and the corresponding Webster's horn model in the
form it is derived and used in \cite{L-M:PEEWEWP,L-M:WECAD}. Both the
forward time solvability and the energy inequalities have been treated
rigorously, and the necessary but hard-to-find Sobolev space apparatus
was presented in App.~\ref{SoboAppendix}.  The strictly dissipative
case was reduced to the conservative case using auxiliary
Theorems~\ref{FirstConservativeMajorationThm} and
\ref{SecondConservativeMajorationThm} that have independent interest.
  
Theorem~\ref{nDimWaveConsThm} can be extended and generalised
significantly using only the techniques presented in this work.
Firstly, a dissipation term, analogous with the one appearing in
Webster's equation \eqref{WebstersEqBnrCtrl}, can be added to the wave
equation part of \eqref{WaveEq} while keeping rest of the model the
same:
\begin{corollary}
\label{nDimWaveConsThmCor1}
  Theorem~\ref{nDimWaveConsThm} remains true if the wave equation
  $\phi_{tt} = c^2 \Delta \phi$ in \eqref{WaveEq} is replaced by
  $\phi_{tt} = c^2 \Delta \phi + g(\cdot) \phi_t$ where $g$ is a
  smooth function satisfying $g(\br) \leq 0$ for all $\br \in \Omega$.
\end{corollary}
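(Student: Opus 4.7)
The plan is to realize the extra term $g(\cdot)\phi_t$ as a bounded, self-adjoint, non-positive perturbation of the interior operator $L$ from Theorem~\ref{nDimWaveConsThm} and then invoke Theorem~\ref{SecondConservativeMajorationThm}. Setting $p = \rho\phi_t$ as in Section~\ref{WaveSection}, the PDE $\phi_{tt} = c^2\Delta\phi + g(\cdot)\phi_t$ becomes $\frac{d}{dt}\sbm{\phi \\ p} = (L+H)\sbm{\phi \\ p}$, where $H := \sbm{0 & 0 \\ 0 & M_g}$ and $M_g$ is multiplication by $g$ on $L^2(\Omega)$. The boundary operators $G$, $G_\alpha$, $K$, $K_\alpha$ and the spaces $\Zscr_\alpha$, $\Xscr$, $\Uscr$, $\tilde\Uscr$ are left unchanged. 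Because $g$ is smooth on the compact closure $\clos{\Omega}$, we have $H = H^* \in \BLO(\Xscr)$, and $g(\br) \leq 0$ yields dissipativity in the energy norm \eqref{EnergyNormEq}:
\begin{equation*}
\Re \ipd{\sbm{z_1 \\ z_2}}{H \sbm{z_1 \\ z_2}}_\Xscr \;=\; \frac{1}{2\rho c^2} \int_\Omega g(\br) \abs{z_2(\br)}^2 \, \dif V \;\leq\; 0.
\end{equation*}
Thus assumption~\eqref{SecondConservativeMajorationThmAss1} of Theorem~\ref{SecondConservativeMajorationThm} holds with $b = 0$.

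The construction then mirrors the proof of Theorem~\ref{WebsterNodeThm}. Applying Theorem~\ref{SecondConservativeMajorationThm} to the conservative majorant $\widetilde\Xi_\alpha = (\sbm{G \\ G_\alpha}, L, \sbm{K \\ K_\alpha})$ built in the proof of Theorem~\ref{nDimWaveConsThm} yields the strong, scattering passive boundary node $\widetilde\Xi_\alpha^{(H)} := (\sbm{G \\ G_\alpha}, L+H, \sbm{K \\ K_\alpha})$ with the same solution space $\tilde\Zscr$. A further application of Theorem~\ref{FirstConservativeMajorationThm} with $G_\alpha$ and $K_\alpha$ in the roles of the auxiliary input/output produces the strong, scattering passive boundary node $\Xi_\alpha^{(H)} := (G\rst{\Zscr_\alpha}, (L+H)\rst{\Zscr_\alpha}, K\rst{\Zscr_\alpha})$ on $(\Uscr,\Xscr,\Uscr)$ with solution space $\Zscr_\alpha$ and semigroup generator $A_\alpha^{(H)} = (L+H)\rst{\Null{G}\cap\Null{G_\alpha}}$. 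The adjoint formula $(A_\alpha^{(H)})^* = (-L+H)\rst{\Null{K}\cap\Null{K_\alpha}}$ is obtained by applying Theorem~\ref{SecondConservativeMajorationThm} once more to the conservative time-flow inverse $\widetilde\Xi_\alpha^\TF = (\sbm{K\\ K_\alpha}, -L, \sbm{G\\ G_\alpha})$ with $H^* = H$, followed by Proposition~\ref{AdjointSemigroupProp}\eqref{AdjointSemigroupPropClaim3}, exactly as in the proof of Theorem~\ref{WebsterNodeThm}\eqref{WebsterNodePropClaim1}.

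The one statement not delivered automatically is $0 \in \rho(A_\alpha^{(H)}) \cap \rho((A_\alpha^{(H)})^*)$, and I would argue it verbatim as at the end of the proof of Theorem~\ref{WebsterNodeThm}\eqref{WebsterNodePropClaim1}: since bounded perturbation preserves compactness of the resolvent, it suffices to exclude $0 \in \sigma_p(A_\alpha^{(H)})$. If $(A_\alpha + H)z_0 = 0$, pairing with $z_0$ and taking real parts gives $\Re\ipd{z_0}{A_\alpha z_0}_\Xscr + \Re\ipd{z_0}{Hz_0}_\Xscr = 0$; both terms are non-positive by dissipativity, so $\Re\ipd{z_0}{-Hz_0}_\Xscr = \norm{(-H)^{1/2}z_0}_\Xscr^2 = 0$ (using that $-H$ is non-negative self-adjoint), whence $Hz_0 = 0$, then $A_\alpha z_0 = 0$, and finally $z_0 = 0$ because $0 \in \rho(A_\alpha)$ by Theorem~\ref{nDimWaveConsThm}. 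The same argument on the adjoint closes the other half, and claims~\eqref{WaveNodePropClaim2}--\eqref{WaveNodePropClaim3} then follow from Proposition~\ref{SolvabilityProp} and the passivity inequality \eqref{ConsCondDiff}. I do not anticipate a serious obstacle: the corollary is designed to illustrate Theorem~\ref{SecondConservativeMajorationThm}, and the only place where smoothness of $g$ and compactness of $\clos{\Omega}$ are used is in securing $H \in \BLO(\Xscr)$, which is precisely what preserves strongness through the perturbation.
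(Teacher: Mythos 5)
Your overall route is exactly the paper's: the paper disposes of this corollary in one sentence by applying Theorem~\ref{SecondConservativeMajorationThm} to the node of Theorem~\ref{nDimWaveConsThm} ``in the same way as has been done in Section~\ref{WebsterSec}'', noting that the perturbation $H=\sbm{0&0\\0&M_g}$ is negative and bounded on $\Xscr$. Your computation $\Re\ipd{\sbm{z_1\\z_2}}{H\sbm{z_1\\z_2}}_\Xscr=\tfrac{1}{2\rho c^2}\int_\Omega g\abs{z_2}^2\dif V\le 0$ in the energy norm, the preservation of strongness since $H\in\BLO(\Xscr)$, and the adjoint identification via Proposition~\ref{AdjointSemigroupProp} all match the template of Theorem~\ref{WebsterNodeThm}; whether you perturb $\widetilde\Xi_\alpha$ first and then restrict to $\Null{G_\alpha}$, or perturb $\Xi_\alpha$ directly, is immaterial since both Theorems~\ref{FirstConservativeMajorationThm} and \ref{SecondConservativeMajorationThm} are stated for passive majorants.

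The one step I would not accept as written is your argument for $0\in\rho(A_\alpha^{(H)})$: you import verbatim the Webster argument ``$A$ has compact resolvent, so it suffices to exclude $0\in\sigma_p$''. In the 1D case compactness of the resolvent is immediate because $\Dom{A_{W,\alpha}}\subset\bigl(H^2\cap H^1_{\{1\}}\bigr)\times H^1_{\{1\}}$ embeds compactly into $H^1\times L^2$. In the 3D Lipschitz setting the analogous claim would require the embedding of $\tilde\Zscr'$ (functions with $\Delta f\in L^2$ and $L^2$ Neumann data on $\Gamma(0)\cup\Gamma$, mixed with a Dirichlet condition on $\Gamma(1)$) compactly into $H^1(\Omega)$, i.e.\ elliptic regularity beyond anything established in the paper or its appendix; the paper never asserts that $A_\alpha$ has compact resolvent. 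The fix is easier than the detour: repeat the surjectivity argument from the proof of Theorem~\ref{nDimWaveConsThm} with $L+H$ in place of $L$. Writing $(L+H)\sbm{z_1\\z_2}=\sbm{w_1\\w_2}$ forces $z_2=\rho w_1$ and $\rho c^2\Delta z_1=w_2-\rho g\,w_1$, with the same boundary conditions as before; since $w_2-\rho g\,w_1\in L^2(\Omega)$, the identical variational problem gives existence and uniqueness of $z_1\in\tilde\Zscr'$, so $(L+H)\rst{\Null{G}\cap\Null{G_\alpha}}$ is bijective onto $\Xscr$ and $0\in\rho(A_\alpha^{(H)})$ with no compactness needed; the adjoint is handled symmetrically. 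With that replacement (or with a separate proof of the compact embedding) your argument is complete and coincides with the paper's intended proof.
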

\noindent Indeed, this follows by using
Theorem~\ref{SecondConservativeMajorationThm} on the result of
Theorem~\ref{nDimWaveConsThm} in the same way as has been done in
Section~\ref{WebsterSec}. Even now the resulting negative perturbation
$H$ on the original interior operator $L$ in \eqref{RestDefined}
satisfies $H \in \BLO(\Xscr)$. The same dissipation term can, of
course, be added to Corollary~\ref{nDimWaveConsThmCor} (where $\alpha =
0$) as well but then the resulting boundary node is only passive
unless $g \equiv 0$.

Theorem~\ref{nDimWaveConsThm} can be generalised to cover much more
complicated geometries $\Omega \subset \R^3$ than tube segments with
circular cross-sections. Inspecting the construction of the boundary
node $\Xi_\alpha$ and the accompanying Hilbert spaces in
Section~\ref{WaveSection}, it becomes clear that much more can be
proved at the cost of more complicated notation but nothing more:
\begin{corollary}
\label{nDimWaveConsThmCor2}
Let $\Omega \subset \R^3$ be a bounded Lipschitz domain satisfying
standing assumptions \eqref{StandingAss1} -- \eqref{StandingAss4} in
App.~\ref{SoboAppendix}. Denote the smooth boundary components of
$\Omega$ by $\Gamma_j$ where $j \in J \subset \N$ satisfying $\partial
\Omega = \cup_{j \in J}{\overline{\Gamma_j}}$. Let $J = J_1 \cup J_2
\cup J_3$ where the sets are pairwise disjoint, and at least $J_1$ and
$J_3$ are nonempty.  Define the open Lipschitz surfaces $\Gamma(0),
\Gamma, \Gamma(1) \subset \partial \Omega$ through their closures
$\overline{\Gamma(0)} = \ \cup_{j \in J_1}{\overline{\Gamma_j}}$,
$\overline{\Gamma} = \ \cup_{j \in J_2}{\overline{\Gamma_j}}$, and
$\overline{\Gamma(1)} = \ \cup_{j \in J_3}{\overline{\Gamma_j}}$,
respectively. Let $\alpha = \{ \alpha_j \}_{j \in J_2} \subset
(-\infty, 0]$ be a vector of dissipation parameters.

Then the wave equation model \eqref{WaveEq} with equations
\begin{equation*}
  \alpha_j \frac{\partial \phi}{\partial t}(\br, t) + \frac{\partial
    \phi}{\partial \bnu}(\br, t) = 0 \quad \text{ for all } \br \in
  \Gamma_j,   \, \, t \geq 0,  \text{ and } j \in J_2
\end{equation*}
in place of the fourth equation in \eqref{WaveEq} defines the boundary
node $\Xi_{\alpha}$ and the Hilbert spaces $\Xscr$, $\Uscr$, and
$\Zscr_{\alpha}$ in a same way as presented in
Section~\ref{WaveSection}. Moreover, Theorem~\ref{nDimWaveConsThm} and
Corollary~\ref{nDimWaveConsThmCor} (where $\alpha_j = 0$ for all $j
\in J_2$) hold without change.
\end{corollary}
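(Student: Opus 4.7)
The plan is to follow almost verbatim the proofs of Theorem~\ref{nDimWaveConsThm} and Corollary~\ref{nDimWaveConsThmCor}, extending the bookkeeping to the more intricate boundary decomposition while relying on the same conservative majoration machinery. First I would split $J_2 = J_2^+ \cup J_2^0$ according to whether $\alpha_j$ is strictly negative or zero. For $j \in J_2^0$ I would encode the homogeneous Neumann condition $\partial \phi/\partial \bnu = 0$ directly into the solution space, in the spirit of $\tilde \Zscr'_0$ from \eqref{Z00Def}. For $j \in J_2^+$ I would attach auxiliary scattering signals on each piece using the formulas of \eqref{GKOpDefinition} with $\alpha$ replaced by $|\alpha_j|$, and assemble an extended boundary node $\widetilde \Xi_\alpha$ whose auxiliary input/output spaces are direct sums $\bigoplus_{j \in J_2^+} L^2(\Gamma_j)$ equipped with appropriately weighted norms, while the principal input/output channel continues to live on $L^2(\Gamma(0))$.

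The main step is to verify that $\widetilde \Xi_\alpha$ is a scattering conservative boundary node via Proposition~\ref{CheckingPassivityProp}. The Green--Lagrange identity follows from Green's identity (Theorem~\ref{GreensIdentityThm}): the boundary integral over $\partial \Omega$ splits across $J$; terms indexed by $J_3$ vanish by the Dirichlet condition $z_2\rst{\Gamma(1)} = 0$; terms from $J_1$ are absorbed into the principal scattering norms exactly as in \eqref{nDimWaveConsPropEq1}--\eqref{nDimWaveConsPropEq3}; terms from each $j \in J_2^+$ are absorbed into the auxiliary scattering norms exactly as in \eqref{nDimWaveConsPropEq4}; and terms from $j \in J_2^0$ disappear because the Neumann condition is built into the solution space. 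Surjectivity of the combined input boundary map and solvability of $(\beta - L)\Null{\sbm{G \\ G_\alpha}} = \Xscr$ (and the analogues for the output) reduce to a mixed Dirichlet--Neumann problem for $\Delta$ on $H^1_{\Gamma(1)}(\Omega)$, which is solvable by Lax--Milgram because $J_3 \neq \emptyset$ supplies the Poincar\'e inequality of Theorem~\ref{PoincareInEqThm}; this is the same variational device used around \eqref{UniqueSolvLaplacian}.

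Once $\widetilde \Xi_\alpha$ is conservative, Theorem~\ref{FirstConservativeMajorationThm} at once gives that the restriction $\Xi_\alpha$ to the kernel of the auxiliary input map is a scattering passive boundary node with the same semigroup generator, and the identification $A_\alpha^* = -L\rst{\Null{K} \cap \Null{K_\alpha}}$ follows as in the proof of Theorem~\ref{nDimWaveConsThm}, since for a conservative node the dual node coincides with the time-flow inverse. The spectral condition $0 \in \rho(A_\alpha) \cap \rho(A_\alpha^*)$ reduces to the uniqueness of the homogeneous mixed Dirichlet--Neumann problem $\Delta z_1 = 0$ with $z_1\rst{\Gamma(1)} = 0$ and $\partial z_1/\partial \bnu\rst{\Gamma(0) \cup \Gamma} = 0$, handled exactly as at the end of that proof. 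When every $\alpha_j = 0$, the set $J_2^+$ is empty, no auxiliary signals are needed, and the argument collapses to that of Corollary~\ref{nDimWaveConsThmCor}, delivering the conservativity and the energy equality \eqref{WaveEnergyBalance}. Claims \eqref{WaveNodePropClaim2} and \eqref{WaveNodePropClaim3} of Theorem~\ref{nDimWaveConsThm} then follow word for word from Proposition~\ref{SolvabilityProp} and \eqref{ConsCondDiff}. The main obstacle I anticipate is not mathematical but notational: keeping straight which subset of boundary components contributes auxiliary scattering channels, which carries principal control/observation, and which encodes a Neumann condition into the solution space, together with the correct weights in the energy and boundary norms so that every sign and every $1/\sqrt{|\alpha_j|}$ lands in the right place when reassembling the Green--Lagrange identity.
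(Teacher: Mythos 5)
Your proposal is correct and takes essentially the same approach as the paper, whose own proof is only a remark that the Section~\ref{WaveSection} construction, the Appendix~\ref{SoboAppendix} trace and Green's-identity results, and the variational solvability arguments extend unchanged to finitely many boundary components --- exactly what you carry out in detail. Your explicit split of $J_2$ into reflecting components ($\alpha_j=0$, with the Neumann condition built into the solution space as in Corollary~\ref{nDimWaveConsThmCor}) and strictly dissipative ones (auxiliary channels plus Theorem~\ref{FirstConservativeMajorationThm}) merely spells out the mixed case the paper leaves implicit, and your use of $|\alpha_j|$ quietly corrects the evident sign typo in the stated parameter range $(-\infty,0]$.
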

\noindent In particular, the set $\Omega$ may be an union of a finite
number of tubular domains described in Section~\ref{IntroSec}. Even
loops are possible and the interior domain dissipation can be added
just like in Corollary~\ref{nDimWaveConsThmCor1}. This configuration
can be found in the study of the spectral limit behaviour of
Neumann--Laplacian on graph-like structures in
\cite{K-Z:CSMSCG,R-S:VPMVTSI}.
\begin{proof}[Comments on the proof]
The argument in Section~\ref{WaveSection} defines $\Xi_\alpha$, the
Hilbert spaces $\Xscr$, $\Uscr$, and $\Zscr_{\alpha}$, and the
Green--Lagrange identity by splitting $\partial \Omega$ into three
smooth components and patching things up using the results of
App.~\ref{SoboAppendix}. The same can be done on any finite number of
components since the results of App.~\ref{SoboAppendix} are
sufficiently general to allow it.  The solvability of the variational
problems in the proof of Theorem~\ref{nDimWaveConsThm} do not depend
on the number of such boundary components either.
\end{proof}
There is nothing in Section~\ref{WaveSection} that would exclude the
further generalisation to $\Omega \subset \R^n$ for any $n \geq 2$ if
standing assumptions \eqref{StandingAss1} -- \eqref{StandingAss4} in
App.~\ref{SoboAppendix} remain true.  If $n = 2$ and $\Omega$ is a
curvilinear polygon (i.e., it is simply connected), the necessary PDE
toolkit can be found in \cite[Section~1]{PG:EPNSD}.

Also Theorem~\ref{WebsterNodeThm} has extensions but not as many as
Theorem~\ref{nDimWaveConsThm}. Firstly, the nonnegative constant
$\alpha$ can be replaced by a nonnegative function $\alpha(\cdot) \in
C[0,1]$ since the $s$-dependency is already present in the operator
$D$ in \eqref{WebsterOpDef}. Secondly, strong boundary nodes described
by Theorem~\ref{WebsterNodeThm} can be scaled to different interval
lengths and coupled to finite \emph{transmission graphs} as explained
in \cite{A-M:CPBCS} for impedance passive component systems.  The full
treatment of a simple transmission graph, consisting of three
Webster's horn models in Y-configuration, has been given in
\cite[Theorem~5.2]{A-M:CPBCS}. More general finite configurations can
be treated similarly, and the resulting impedance passive system can
be translated to a scattering passive system by the external Cayley
transform \cite[Section~3]{M-S:IPCBCS}, thus producing a
generalisation of Theorem~\ref{WebsterNodeThm}. We note that there is
not much point in trying to derive the transmission graph directly
from scattering passive systems since the continuity equation (for the
pressure) and Kirchhoff's law (for the conservation of flow) at each
node is easiest described by impedance notions.


That Theorem~\ref{SecondConservativeMajorationThm} cannot be used for
all possible dissipation terms is seen by considering the wave
equation with Kelvin--Voigt structural damping term
\begin{equation}
  \psi_{tt} = c^2 \psi_{ss} +  \frac{\partial }{\partial s} \left
  (\beta(s) \frac{\partial }{\partial s} \psi_{t} \right )
\quad \text{ where } \quad \beta(s) \geq 0.
\end{equation}
For details of this dissipation model, see, e.g., \cite{L-L:EDEVSLV}.
To obtain the full dynamical system analogous to the one associated
with Webster's equation, the same boundary and initial conditions can
be used as in \eqref{IntroWebstersEq} for $\beta \in C^\infty[0,1]$
compactly supported $(0,1)$. Thus the operators $G_W$ and $K_W$ do not
change. Following Section~\ref{WebsterSec} we use the velocity
potential and the pressure as state variables $\sbm{\psi \\ \pi}$.  We
define the Hilbert spaces $\Zscr_W$ and $\Xscr_W$ similarly as well as
the operators
\begin{equation*}
\begin{aligned}
L_W & := \bbm{0 & \rho^{-1} \\ \rho c^2 \frac{\partial^2}{\partial s^2}
  & 0}:\Zscr_W \to \Xscr_W \text{ and } \\ \widetilde{H} & := \bbm{0 & 0
  \\ 0 & \frac{\partial }{\partial s} \left (\beta(s) \frac{\partial
  }{\partial s} \right ) }:\Dom{\widetilde{H}}\subset \Xscr_W \to \Xscr_W
\end{aligned}
\end{equation*}
where $\Dom{\widetilde{H}} := H^1_{\{ 1 \}}(0,1) \times \{f \in
L^2(0,1) : \beta(s) \tfrac{\partial f}{\partial s} \in H^1(0,1) \}$.
The physical energy norm for $\Xscr_W$ is given by
\eqref{WebstersEnergyNorm} with $A(s) = \Sigma(s) \equiv 1$
representing a constant diameter straight tube. If the parameter
$\beta \equiv 0$, the colligation $(G_W, L_W, K_W )$ is a special case
of the conservative system $\Xi_0^{(W)}$ described in
Theorem~\ref{WebsterNodeThm}. Clearly, the domain of $\widetilde{H}$
cannot be further extended without violating the range inclusion in
$\Xscr_W$. On the other hand, the inclusion $\Zscr \subset
\Dom{\widetilde{H}}$ required by
Theorem~\ref{SecondConservativeMajorationThm} is not satisfied.

\subsection*{Acknowledgment}
\noindent The authors have received support from the Finnish Graduate School on
 Engineering Mechanics, the Norwegian Research Council, and Aalto
 Starting Grant (grant no. 915587).  The authors wish to thank the
 anonymous referees for many valuable comments.


\appendix

\section{\label{SoboAppendix} Sobolev spaces and Green's identity}

We prove a sufficiently general form of Green's identity that holds in
a tubular domain $\Omega$ (that has a Lipschitz boundary) with
minimal assumptions on any functions involved. We make the following
standing assumptions on $\Omega$:
  \begin{enumerate}
  \item \label{StandingAss1} $\Omega$ is a bounded Lipschitz domain so
    that $\Omega$ locally on one side of is boundary $\partial
    \Omega$;
  \item \label{StandingAss2} there is a finite number of smooth, open,
    connected, and disjoint $(n-1)$-dimensional surfaces $\Gamma_j$
    with the following property: the boundary $\partial \Omega$ is a
    union of all $\Gamma_j$'s and parts of their common boundaries
    $\overline{\Gamma}_j\cap\overline{\Gamma}_k$ for $j\not=k$;
  \item \label{StandingAss3} $\Ha^{n-2}(\overline{\Gamma}_j\cap\overline{\Gamma}_k)<\infty
    $ for all $j\not=k$ where $\Ha^{m}(M)$ is the $m$-dimensional
    Hausdorff measure for $1 \leq m \leq n$ of $M \subset \R^n$;  and
  \item \label{StandingAss4} for each $j$, there is a $C^{\infty}$ vector field $\bnu_j$
    defined in a neighbourhood of $\overline{\Omega}$ such that
    $\bnu_j(\br)$ is the exterior unit normal to $\Gamma_j$ at $\br
    \in \Gamma_j$.
  \end{enumerate}  
  That $\Gamma_j \subset \R^n$ is an open, bounded, and smooth
  $(n-1)$-dimensional surface means plainly the following: there is an
  open and bounded $\tilde \Gamma_j \subset \R^{ n- 1}$ and a
  $C^{\infty}$-diffeomorphism $\phi_j$ from $\tilde \Gamma_j$ onto
  $\Gamma_j$. The pair $(\phi_j, \tilde \Gamma_j)$ is a global
  coordinate representation of $\Gamma_j$.



The boundary conditions in Section \ref{WaveSection} involve Dirichlet
conditions on some parts of the boundary $\partial \Omega$ and Neumann
type conditions on other parts of the same \emph{connected component}
of $\partial \Omega$. All this is in contrast with the inconvenient
technical assumption on $\partial \Omega$ in, e.g.,
\cite{JL:DSWEBRBD,M-S:CBCS,RT:WEBDBDOA} that must be avoided in the
verification of the Green--Lagrange identity in Section
\ref{WaveSection} and elsewhere.  We need a version of Green's
identity suitable for this situation. This is in Theorem
\ref{GreensIdentityThm} below. The key fact ensuring the validity of
this identity is that the interfaces where we switch between different
boundary conditions are so small that Sobolev functions do not see
them. That this is the case is a consequence of the assumption
\eqref{StandingAss3} above, and it is expressed rigorously in the
following auxiliary result.
\begin{lemma}\label{lem:zerocap}
  Let $\Omega$ be a bounded domain with a Lipschitz boundary, and let
  $E \subset \R^n$ be a compact set of zero capacity; i.e.,
  \begin{equation}
    \label{CapacityDef}
    C(E) := \inf_{u \in S(E)}\int_{\R^n}{\left ( \abs{u}^2+\abs{\nabla u}^2 \right ) \dif V} = 0
  \end{equation}
  where 
  \begin{equation*}
    S(E) := \{u \in C^\infty(\R^n) : 0\leq u\leq 1 \text{ in } \R^n \text{ and } 
    u = 1 \text{ in } N, \text{ where } N \text{ is open and } E \subset N   \}.
  \end{equation*}
  Then
  \begin{enumerate} 
  \item \label{lem:zerocapClaim1}
    the set $\mathcal D_E(\R^n)$ is dense in $H^1(\R^n)$ where
    \begin{equation} \label{VanishingTestFunctionDef}
      \mathcal D_E(\R^n) := \{u \in \mathcal D(\R^n) : u \text{ vanishes  in an open neighbourhood of } E \}; and
    \end{equation}
  \item \label{lem:zerocapClaim2}
    the set
    \begin{equation*}
      \mathcal D_E(\overline{\Omega}) := \{ u \rst{\Omega} : u \in   \mathcal D_E(\R^n) \}
    \end{equation*}
    is dense in $H^1(\Omega)$.
  \end{enumerate}
  \end{lemma}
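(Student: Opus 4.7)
The plan is to establish claim \eqref{lem:zerocapClaim1} by a direct cut-off argument using the definition of zero capacity, and then to deduce claim \eqref{lem:zerocapClaim2} by invoking the Calderón extension theorem for bounded Lipschitz domains to transfer the approximation from $\R^n$ down to $\Omega$.

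For claim \eqref{lem:zerocapClaim1}, I would first use the classical density of $\mathcal D(\R^n)$ in $H^1(\R^n)$ to reduce the problem to approximating an arbitrary $u \in \mathcal D(\R^n)$ by elements of $\mathcal D_E(\R^n)$. For such a smooth compactly supported $u$, both $\|u\|_\infty$ and $\|\nabla u\|_\infty$ are finite. Given $\epsilon > 0$, the zero-capacity hypothesis \eqref{CapacityDef} provides some $v = v_\epsilon \in S(E)$ with $\|v\|_{H^1(\R^n)}^2 < \epsilon$. I would then set $u_\epsilon := (1-v)u$, which by construction vanishes on the open neighbourhood of $E$ where $v \equiv 1$, and which is smooth and compactly supported since $u$ and $v$ are. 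Hence $u_\epsilon \in \mathcal D_E(\R^n)$.

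To control the approximation error I would expand $u - u_\epsilon = vu$ via the product rule and use the pointwise bound $0 \leq v \leq 1$ to obtain the estimate
\begin{equation*}
\|u - u_\epsilon\|_{H^1(\R^n)} \leq \|u\|_\infty \|v\|_{L^2(\R^n)} + \|u\|_\infty \|\nabla v\|_{L^2(\R^n)} + \|\nabla u\|_\infty \|v\|_{L^2(\R^n)} \leq (2\|u\|_\infty + \|\nabla u\|_\infty)\sqrt{\epsilon},
\end{equation*}
which can be made arbitrarily small by shrinking $\epsilon$. A standard $2\epsilon$-argument, combined with the density of $\mathcal D(\R^n)$ in $H^1(\R^n)$, then yields claim \eqref{lem:zerocapClaim1}.

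For claim \eqref{lem:zerocapClaim2}, I would argue as follows. For any $u \in H^1(\Omega)$, the Calderón (or Stein) extension theorem, applicable because $\Omega$ is a bounded Lipschitz domain by assumption \eqref{StandingAss1}, produces a function $\tilde u \in H^1(\R^n)$ with $\tilde u\rst{\Omega} = u$ and $\|\tilde u\|_{H^1(\R^n)} \leq C\|u\|_{H^1(\Omega)}$. Applying claim \eqref{lem:zerocapClaim1} to $\tilde u$ yields a sequence $w_k \in \mathcal D_E(\R^n)$ with $w_k \to \tilde u$ in $H^1(\R^n)$. The restrictions $w_k\rst{\Omega}$ then lie in $\mathcal D_E(\overline{\Omega})$ by definition and satisfy $w_k\rst{\Omega} \to u$ in $H^1(\Omega)$, because the restriction map from $H^1(\R^n)$ to $H^1(\Omega)$ is a contraction.

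The only real obstacle is a bookkeeping one: verifying that the extension theorem applies under the standing assumptions \eqref{StandingAss1}--\eqref{StandingAss4}, which follows from \eqref{StandingAss1} alone. The cut-off computation itself is routine and uses only that the approximating $v_\epsilon$ equals $1$ on \emph{some} open neighbourhood of $E$; no quantitative control on the size of that neighbourhood is needed.
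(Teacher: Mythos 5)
Your proposal is correct and follows essentially the same route as the paper: both reduce to a smooth compactly supported $u$ by density, multiply by $1$ minus a capacity cut-off function to produce an element of $\mathcal D_E(\R^n)$, and obtain claim (ii) by composing claim (i) with a bounded extension operator $H^1(\Omega)\to H^1(\R^n)$ for the Lipschitz domain. The only (harmless) difference is that you bound $\norm{v_\epsilon u}_{H^1(\R^n)}$ directly via $\norm{u}_\infty$, $\norm{\nabla u}_\infty$ and the small $H^1$-norm of $v_\epsilon$, whereas the paper uses a sequence of cut-offs together with a pointwise a.e.\ subsequence and dominated convergence; your quantitative estimate is slightly more streamlined but the idea is the same.
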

  \begin{proof}
    Claim \eqref{lem:zerocapClaim1}:
  Let $u\in H^1(\R^n)$ and $\varepsilon>0$. Then by
  \cite[Theorem 1.4.2.1]{PG:EPNSD} there is $v \in \mathcal D(\R^n)$
  such that $\norm{u-v}_{H^1(\R^n)}<\varepsilon/2$.
  
  By the vanishing capacity assumption \eqref{CapacityDef}, there is a
  sequence $\{\varphi_j \}_{j=1,2,\ldots} \subset C^\infty(\R^n)$ such
  that $\varphi_j\rst{N_j}=1$ for some neighbourhoods $N_j$ of $E$,
  and also
  \begin{equation} \label{ZeroCapLemmaEq1}
    \lim_{j\to \infty} {\int_{\R^n}{\left ( \abs{\varphi_j}^2+\abs{\nabla \varphi_j}^2 \right )\dif V}} = 0. 
  \end{equation}  
  Defining $v_j(\br) := v(\br) (1-\varphi_j(\br))$ we see that
  each of these functions
  satisfies $v_j \in \mathcal D_E(\R^n)$.  It remains to prove that
  $\norm{v_j - v}_{H^1(\R^n)}<\varepsilon/2$ for all $j$ large enough,
  since then
  \begin{equation*}
    \norm{v_j - u}_{H^1(\R^n)}\leq \norm{v_j - v}_{H^1(\R^n)} + \norm{u-v}_{H^1(\R^n)}
    <\varepsilon.
  \end{equation*}
   By possibly replacing $\{\varphi_j \}_{j=1,2,\ldots}$ by its subsequence, we
  may assume that $\varphi_j\to 0$ pointwise almost everywhere; see
  \cite[Theorem 3.12]{WR:RCA}.  Because $\abs{v_j(\br)} \leq
  \abs{v(\br)}$ for all $\br \in \R^n$ and $j = 1,2,\ldots$, we have
  $v_j\to v$ in $L^2(\R^n)$ by the Lebesgue dominated convergence
  theorem. For the gradients, we note that $\nabla(v_j -v)= -
  \varphi_j \nabla v - v\nabla\varphi_j$.  Thus $\abs{\nabla(v_j -
    v)}\to 0$ in $L^{2}(\R^n)$, since both $\varphi_j$ and
  $\abs{\nabla\varphi_j}$ tend to zero in $L^2(\R^n)$ by
  \eqref{ZeroCapLemmaEq1}.

Claim \eqref{lem:zerocapClaim2}: Let $u \in H^1(\Omega)$ and take
$\varepsilon>0$.  Since $\Omega$ has a Lipschitz boundary, there is an
extension operator $T \in \BLO(H^{1}(\Omega); H^1(\R^n))$ such that
$\left ( T u \right )\rst{\Omega}=u$; see \cite[Theorem
  1.4.3.1]{PG:EPNSD}.  By claim \eqref{lem:zerocapClaim1}, there is a
function $v \in \mathcal D_E(\R^n)$ such that
  \begin{equation*}
   \norm{u-v\rst{\Omega}}_{H^1(\Omega)} \leq \norm{T u-v}_{H^1(\R^n)}  <\varepsilon
  \end{equation*}
  which completes the proof.
\end{proof}

Let us review the Sobolev spaces and the boundary trace mappings on
$\Omega$ and $\partial \Omega$ when the standing assumptions
\eqref{StandingAss1} -- \eqref{StandingAss4} above hold.  The boundary
Sobolev spaces $H^{s}(\partial \Omega)$ and $H^{s}(\Gamma_j)$ for $s
\in [-1,1]$ are defined as in \cite[Definitions 1.2.1.1 and
1.3.3.2]{PG:EPNSD}.  The zero extension Sobolev spaces on $\Gamma_j$ 
are defined by
\begin{equation*}
 \tilde H^{s}(\Gamma_j) := \{ u \in H^{s}(\Gamma_j) : \tilde u \in
 H^{s}(\partial \Omega) \}
\end{equation*}
for $s \in (0,1]$ where
\begin{equation} \label{ZeroExtendedPartialTrace}
  \tilde u(\br) := 
  \begin{cases} 
    u(\br) & \text{ if } \br \in \Gamma_j \\
    0 & \text{ if } \br \in \partial \Omega \setminus \Gamma_j.
  \end{cases}
\end{equation}
We use the Hilbert space norms $\norm{u}_{\tilde H^{s}(\Gamma_j)} :=
\norm{\tilde u}_{ H^{s}(\partial \Omega)}$. The space $\tilde
H^{s}(\Gamma_j)$ is closed in this norm since restriction to
$\Gamma_j$ from $\partial \Omega$ is a bounded operator from
$H^{s}(\partial \Omega)$ to $H^{s}(\Gamma_j)$ for $0\leq s\leq
1$. This boundedness follows trivially by restriction using the
Gagliardo seminorm, see \cite[Eq. (1,3,3,3) on  p.~20]{PG:EPNSD}.
 Then $H^{s}(\partial \Omega)\subset L^2(\partial \Omega)$ and $\tilde
 H^{s}(\Gamma_j) \subset H^{s}(\Gamma_j) \subset L^2(\Gamma_j)$ with
 bounded inclusions.

The \emph{Dirichlet trace operator} $\gamma$ is first defined for
functions $f \in \mathcal D(\closure{\Omega})$ simply by restriction
$\gamma f := f\rst{\partial \Omega}$.  This operator has a unique
extension to a bounded operator $\gamma \in
\BLO(H^{1}(\Omega);H^{1/2}(\partial \Omega))$; see \cite[Theorem
  1.5.1.3]{PG:EPNSD} and Lemma \ref{lem:zerocap}. All this holds for
any Lipschitz domain $\Omega$.

We define the \emph{Neumann trace operator} separately on each surface
$\Gamma_j$ using the vector fields $\bnu_j$.  Such an operator
$\gamma_j \frac{\partial}{\partial \bnu_j}$ is first defined on
$\mathcal D(\closure{\Omega})$ (with values in $L^2(\partial \Omega))$
by setting $\left ( \gamma_j \frac{\partial}{\partial \bnu_j} f \right
)(\br) := \bnu_j(\br) \cdot \nabla f(\br)$ for all $\br \in \Gamma_j$;
here $\gamma_j f := f\rst{ \Gamma_j}$ and
$\frac{\partial}{\partial \bnu_j} := \bnu_j \cdot \nabla$.  It is easy
to see that $\frac{\partial f}{\partial \bnu_j} \in H^1(\Omega)$ and
hence $\gamma_j \frac{\partial}{\partial \bnu_j}$ has an extension to
an operator in $\BLO(H^2(\Omega);H^{1/2}(\Gamma_j))$ by \cite[Theorem
  1.5.1.3]{PG:EPNSD}.  We then define the full Neumann trace operator
$\gamma \frac{\partial }{\partial \bnu}$ on $\cup_j \Gamma_j$ by
  \begin{equation*}
    \gamma \frac{\partial f}{\partial \bnu}(\br) :=  \gamma_j \frac{\partial f}{\partial \bnu_j}(\br)
    \quad \text{ for all } f \in H^2(\Omega) \quad \text{ and (almost) all } \quad  \br \in \Gamma_j.
  \end{equation*}
  Note that the function $\gamma \frac{\partial f}{\partial \bnu}$ is
  not defined at all on the exceptional set of capacity zero
  \begin{equation} \label{ExeptionalSet}
    E := \cup_{j\not= k}(\overline{\Gamma}_j\cap\overline{\Gamma}_k)
  \end{equation}
  of the non-smooth part of $\partial \Omega$. That $C(E) = 0$ follows
  from the standing assumption \eqref{StandingAss3}  by
  \cite[Theorem 3, p. 154]{E-G:MTFPF}.
  
 We need to extend each $\gamma_j \frac{\partial}{\partial \bnu_j}$ to
 the Hilbert space
 \begin{equation*}
   E(\Delta;L^2(\Omega)) := \{ f \in H^1(\Omega): \Delta f \in L^2(\Omega) \}
 \end{equation*}
 that is equipped with the norm defined by $ \| f
 \|_{E(\Delta;L^2(\Omega))}^{2} = \| f \|_{H^1(\Omega)}^{2} + \|
 \Delta f \|_{L^{2}(\Omega)}^{2}$.

 We use an appropriate $L^2$ space as the pivot space for Sobolev
 spaces and their duals.

\begin{proposition} \label{NeumannTraceOnMaximalDomainProp}
  Let the domain $\Omega \subset \R^n$ satisfy the standing
  assumptions \eqref{StandingAss1} -- \eqref{StandingAss4}.
\begin{enumerate}  
\item \label{NeumannTraceOnMaximalDomainPropClaim1}
  Then each Neumann trace operator $\gamma_j \frac{\partial}{\partial
    \bnu_j}$ (originally defined on $\mathcal D(\overline{\Omega})$)
  has a unique extension (also denoted by $\gamma_j
  \frac{\partial}{\partial \bnu_j}$) that is bounded from
  $E(\Delta;L^2(\Omega))$ into the dual space of $\tilde
  H^{1/2}(\Gamma_j)$.
\item \label{NeumannTraceOnMaximalDomainPropClaim2}
  We have 
  \begin{equation*} 
    \int_{\Omega}{\left ( \Delta u \right ) v\dif V}
    +\int_{\Omega}{\nabla u\cdot \nabla v \dif V} = \sum_j{\left <
      \gamma_j \frac{\partial u}{\partial \bnu}, \gamma_j v \right
      >_{[\tilde H^{1/2}(\Gamma_j)]^d, \tilde H^{1/2}(\Gamma_j)}}
  \end{equation*}
for all $u \in E(\Delta;L^2(\Omega))$ and $v \in H^1(\Omega)$ such
that $\gamma_j v \in \tilde H^{1/2}(\Gamma_j)$ for all $j$.
\end{enumerate} 
\end{proposition}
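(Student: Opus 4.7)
The plan is to construct the extension in claim \eqref{NeumannTraceOnMaximalDomainPropClaim1} by the standard ``integration by parts'' duality. Given $u \in E(\Delta;L^2(\Omega))$ and $\varphi \in \tilde H^{1/2}(\Gamma_j)$, I would lift $\varphi$ to a function on all of $\Omega$ and pair it with $\gamma_j \partial u/\partial \bnu$ via the Green bilinear form. Concretely, extend $\varphi$ by zero to $\tilde \varphi \in H^{1/2}(\partial \Omega)$, pick a bounded right inverse $\mathcal{E} \in \BLO(H^{1/2}(\partial \Omega); H^1(\Omega))$ of the Dirichlet trace (standard on Lipschitz domains), set $v := \mathcal{E} \tilde \varphi$, and define
\[
  \Bigl\langle \gamma_j \tfrac{\partial u}{\partial \bnu}, \varphi \Bigr\rangle := \int_\Omega (\Delta u)\, v \dif V + \int_\Omega \nabla u \cdot \nabla v \dif V.
\]
Cauchy--Schwarz combined with the boundedness of $\mathcal{E}$ and of the zero-extension $\tilde H^{1/2}(\Gamma_j) \hookrightarrow H^{1/2}(\partial \Omega)$ then yields the required estimate $|\langle \gamma_j \partial u/\partial \bnu, \varphi\rangle| \leq C \|u\|_{E(\Delta;L^2(\Omega))} \|\varphi\|_{\tilde H^{1/2}(\Gamma_j)}$.

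Well-definedness requires independence of the chosen lift; if $v_1, v_2$ are two such lifts then $w := v_1 - v_2 \in H^1_0(\Omega)$, so it suffices to show $\int_\Omega (\Delta u)\, w \dif V + \int_\Omega \nabla u \cdot \nabla w \dif V = 0$ for every $w \in H^1_0(\Omega)$. For $w \in \mathcal{D}(\Omega)$ this is nothing but the very definition of $\Delta u$ as the distributional Laplacian having an $L^2$-representative, and density of $\mathcal{D}(\Omega)$ in $H^1_0(\Omega)$ together with the $H^1$-continuity of both terms extends the identity to all $w \in H^1_0(\Omega)$. Consistency with the original definition on $u \in \mathcal{D}(\overline{\Omega})$ follows from classical Green's formula, since $\gamma v = \tilde \varphi$ is supported on $\overline{\Gamma_j}$ (on which $\bnu = \bnu_j$), using that the exceptional set $E$ from \eqref{ExeptionalSet} has $\Ha^{n-1}$-measure zero by standing assumption \eqref{StandingAss3}.

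For claim \eqref{NeumannTraceOnMaximalDomainPropClaim2}, I would observe that the decomposition $\gamma v = \sum_j (\gamma_j v)^{\sim}$ holds in $H^{1/2}(\partial \Omega)$: both sides lie there (the right-hand side by the finiteness of the index set and the hypothesis $\gamma_j v \in \tilde H^{1/2}(\Gamma_j)$), they agree in $L^2(\partial \Omega)$ because the pairwise disjoint open pieces $\Gamma_j$ cover $\partial \Omega$ up to the $\Ha^{n-1}$-null set $E$, and the embedding $H^{1/2}(\partial \Omega) \hookrightarrow L^2(\partial \Omega)$ is injective. Setting $v_j := \mathcal{E}((\gamma_j v)^{\sim}) \in H^1(\Omega)$ one gets $\gamma(v - \sum_j v_j) = 0$, so $v - \sum_j v_j \in H^1_0(\Omega)$. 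Applying the distributional identity of the previous paragraph to $w = v - \sum_j v_j$ and summing the defining formulae $\langle \gamma_j \partial u/\partial \bnu, \gamma_j v\rangle = \int_\Omega (\Delta u)\,v_j \dif V + \int_\Omega \nabla u \cdot \nabla v_j \dif V$ over the finite index set then yields the Green's identity of claim \eqref{NeumannTraceOnMaximalDomainPropClaim2}. The step I expect to demand the most care is not any single deep estimate but the clean bookkeeping of the zero-extension spaces: everything ultimately rests on the one distributional identity for $w \in H^1_0(\Omega)$ together with the $\Ha^{n-1}$-negligibility of the interface set $E$, the latter being exactly what standing assumptions \eqref{StandingAss1}--\eqref{StandingAss4} and Lemma~\ref{lem:zerocap} are designed to guarantee.
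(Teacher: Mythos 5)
Your construction is essentially correct, but it follows a genuinely different route from the paper's proof: you define the dual pairing directly by lifting the zero-extended boundary datum with a right inverse of the Dirichlet trace and checking well-definedness through the distributional identity $\int_\Omega (\Delta u)w\dif V + \int_\Omega \nabla u\cdot\nabla w\dif V = 0$ for $w \in H^1_0(\Omega)$. This is in effect the ``global'' alternative that the paper itself sketches in the remark following Theorem~\ref{GreensIdentityThm}. The paper instead works piecewise: it first proves the classical identity \eqref{ClassicalGreenId} for $u \in \mathcal D(\overline{\Omega})$ and test functions $v$ vanishing near the interface set $E$ (by a rounding argument), derives the estimate \eqref{GreenEstimate}, and then extends by two density steps -- $\mathcal D_E(\overline{\Omega})$ dense in $H^1(\Omega)$ (Lemma~\ref{lem:zerocap}) and $\mathcal D(\overline{\Omega})$ dense in $E(\Delta;L^2(\Omega))$ (Grisvard, Lemma~1.5.3.9) -- before invoking a Riesz-type representation. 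Your route is shorter and buys you independence from the capacity machinery: for this Proposition your argument needs only $\Ha^{n-1}(E)=0$, which follows directly from assumption \eqref{StandingAss3} (finite $\Ha^{n-2}$-measure), not from Lemma~\ref{lem:zerocap}; your closing sentence crediting the capacity lemma mischaracterises what you actually used (the capacity lemma becomes genuinely necessary only for the mixed $L^2$/dual-pairing hypotheses of Theorem~\ref{GreensIdentityThm}). What the paper's heavier route buys is precisely the infrastructure (the $\mathcal D_E$ density and the estimate on $\mathcal D(\overline{\Omega})$) reused later. Note also that your consistency and decomposition steps quietly rely on two standard Lipschitz-domain facts that you should cite: Green's formula for $u \in \mathcal D(\overline{\Omega})$, $v \in H^1(\Omega)$ (Grisvard, Theorem~1.5.3.1) and the identification $\Null{\gamma} = H^1_0(\Omega)$.

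The one genuine omission is the word ``unique'' in claim (i). You construct \emph{a} bounded extension and verify that it agrees with $\gamma_j\frac{\partial}{\partial\bnu_j}$ on $\mathcal D(\overline{\Omega})$, but uniqueness of a bounded extension to $E(\Delta;L^2(\Omega))$ requires knowing that $\mathcal D(\overline{\Omega})$ is dense in $E(\Delta;L^2(\Omega))$ in the graph norm -- a nontrivial fact that the paper cites as \cite[Lemma~1.5.3.9]{PG:EPNSD} and that your argument never invokes. The gap is easily closed by adding that citation, but as written your proof establishes existence and consistency only, not uniqueness.
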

\begin{proof}
  The classical Green's identity for $u \in \mathcal D(\overline{\Omega})$ and $v
  \in \mathcal D_E(\overline{\Omega})$ is
  \begin{equation} \label{ClassicalGreenId} 
    \int_{\Omega}{\left ( \Delta u \right ) v\dif V} 
    +\int_{\Omega}{\nabla u\cdot \nabla v \dif V}
    = \sum_{j}{\int_{\Gamma_j} { \gamma_j \frac{\partial u}{\partial  \bnu_j} \gamma_j v \dif A }}, 
  \end{equation}
  where $E$ is the exceptional set in \eqref{ExeptionalSet}.  Indeed,
  since $v$ vanishes near the interfaces
  $\overline{\Gamma}_j\cap\overline{\Gamma}_k$ for $j \neq k$, we may
  initially apply Green's identity just like \eqref{ClassicalGreenId}
  but over a subdomain of $\Omega$ that has been obtained from
  $\Omega$ by rounding slightly at all $\partial \Gamma_j$'s but
  preserving essentially all of $\partial \Omega$.  Then we get
  \eqref{ClassicalGreenId} by rewriting the result as integrals over
  the original $\Omega$ and the original boundary pieces $\Gamma_j$,
  noting that on additional points the integrands vanish because $ v
  \in \mathcal D_E(\overline{\Omega})$.

 It follows from \eqref{ClassicalGreenId} that we have for $u \in
 \mathcal D(\overline{\Omega})$ and $v \in \mathcal
 D_E(\overline{\Omega})$ the estimate
 \begin{equation} \label{GreenEstimate}
   \abs{\sum_{j}{ \left <\gamma_j \frac{\partial u}{\partial \bnu_j},
       \gamma_j v \right >_{L^2(\Gamma_j)}}} \leq
   \norm{u}_{E(\Delta;L^2(\Omega))} \cdot 4 \norm{v}_{H^1(\Omega)}.
 \end{equation}
 Because $\mathcal D_E(\overline{\Omega})$ is dense in $H^1(\Omega)$
 by Lemma \ref{lem:zerocap} and $\gamma \in
 \BLO(H^1(\Omega);H^{1/2}(\partial \Omega))$ by the trace theorem
 \cite[Theorem 1.5.1.3]{PG:EPNSD}, we conclude that
 \eqref{GreenEstimate} holds for all $u \in \mathcal
 D(\overline{\Omega})$ and $v \in H^1(\Omega)$.

 Fix now $j$ and $g \in \tilde H^{1/2}(\Gamma_j)$, and define $\tilde
 g \in H^{1/2}(\partial \Omega)$ by \eqref{ZeroExtendedPartialTrace}.
 Because the Dirichlet trace $\gamma: H^1(\Omega) \to H^{1/2}(\partial
 \Omega)$ is bounded and surjective, it has a continuous right inverse
 $P \in \BLO(H^{1/2}(\partial \Omega); H^1(\Omega))$, see
 \cite[Theorem 1.5.1.3]{PG:EPNSD}.  Thus there exists $v \in
 H^1(\Omega)$ such that $\gamma_j v = \tilde g \rst{\Gamma_j} = g$ and
 $\gamma_k v = 0$ for $k \neq j$; we may choose $v=P\tilde g$.  From
 this, we have the estimate $4 \norm{v}_{H^1(\Omega)} \leq K
 \norm{\tilde g}_{H^{1/2}(\partial \Omega)} = K \norm{ g}_{\tilde
   H^{1/2}(\Gamma_j)}$.

  It follows from all this and \eqref{GreenEstimate} that we have
  \begin{equation} \label{ContLinFunct}
    \abs{\Phi_g(u) } \leq K \norm{u}_{E(\Delta;L^2(\Omega))} \cdot
    \norm{g}_{\tilde H^{1/2}(\Gamma_j)}
  \end{equation}
  for all $g \in \tilde H^{1/2}(\Gamma_j)$ where $\Phi_g(u) := \left
  <\gamma \frac{\partial u}{\partial \bnu}, \tilde g \right
  >_{L^2(\partial \Omega)} = \left <\gamma_j \frac{\partial u}{\partial
    \bnu_j}, g \right >_{L^2(\Gamma_j)}$ for $u \in \mathcal
  D(\overline{\Omega})$.  Since $\mathcal D(\overline{\Omega})$ is dense
  in $E(\Delta;L^2(\Omega))$ by \cite[Lemma 1.5.3.9]{PG:EPNSD}, we may
  extend $\Phi_g$, $g \in \tilde H^{1/2}(\Gamma_j)$, by continuity to a
  continuous linear functional on $E(\Delta;L^2(\Omega))$ satisfying
  estimate \eqref{ContLinFunct}, too.
  
  For each fixed $u \in E(\Delta;L^2(\Omega))$, the mapping $g \mapsto
  \Phi_g(u)$ is a continuous linear functional on $\tilde
  H^{1/2}(\Gamma_j)$ by \eqref{ContLinFunct}.  Hence, there is a
  representing vector -- denoted by $\gamma_j \frac{\partial u}{\partial
    \bnu_j}$ -- in the dual space $[\tilde H^{1/2}(\Gamma_j)]^d$ such
  that $\Phi_g(u) = \left <\gamma_j \frac{\partial u}{\partial \bnu_j},
  g \right >_{[\tilde H^{1/2}(\Gamma_j)]^d, \tilde H^{1/2}(\Gamma_j)}$.
  This proves claim \eqref{NeumannTraceOnMaximalDomainPropClaim1}.
  Claim \eqref{NeumannTraceOnMaximalDomainPropClaim2} follows by a
  density argument using claim
  \eqref{NeumannTraceOnMaximalDomainPropClaim1} and
  \eqref{ContLinFunct}.
\end{proof}


\begin{theorem}[Green's identity]
 \label{GreensIdentityThm}
 Let the domain $\Omega \subset \R^n$ satisfy the standing assumptions
 \eqref{StandingAss1} -- \eqref{StandingAss4} above.  Assume that
 $u\in H^1(\Omega)$ is such that $\Delta u\in L^2(\Omega)$ and
 satisfies $\frac{\partial u}{\partial\bnu}\in
 L^2(\cup_{j=1}^k\Gamma_j)$ for some $1\leq k\leq n$. Then the Green's
 identity
  \begin{equation}
    \label{GreenIdentity}
    \int_{\Omega} \left ( \Delta u \right ) v\dif V+\int_{\Omega}\nabla u\cdot \nabla
    v\dif V
    =\sum_{j=1}^k\int_{\Gamma_j}\frac{\partial u}{\partial
      \bnu}v\dif A
    +\sum_{j=k+1}^n\left <\gamma_j \frac{\partial u}{\partial \bnu_j},
  \gamma_j v \right >_{[\tilde H^{1/2}(\Gamma_j)]^d, \tilde H^{1/2}(\Gamma_j)}
  \end{equation}
  holds for functions $v\in H^1(\Omega)$ such that $\gamma_j v\in
  \tilde H^{1/2}(\Gamma_j)$ for $k+1\leq j\leq n$.
\end{theorem}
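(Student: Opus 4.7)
Set
\begin{equation*}
  \Xscr_k := \Bigl\{ u \in H^{1}(\Omega) : \Delta u \in L^{2}(\Omega),\ \gamma_j \tfrac{\partial u}{\partial \bnu_j} \in L^{2}(\Gamma_j) \text{ for } j = 1, \ldots, k\Bigr\}
\end{equation*}
with the graph Hilbert norm; by hypothesis $u \in \Xscr_k$. My approach is to approximate $u$ in the $\Xscr_k$ norm by smooth functions $u_n \in \mathcal{D}(\overline{\Omega})$, apply the classical Green's identity to each $u_n$, and pass to the limit, handling the favourable pieces $\Gamma_j$ ($j \leq k$) through the $L^{2}(\Gamma_j)$-convergence of the Neumann traces and the remaining pieces ($j > k$) through Proposition~\ref{NeumannTraceOnMaximalDomainProp}\eqref{NeumannTraceOnMaximalDomainPropClaim1}.

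The technical core, and the main obstacle, is the density of $\mathcal{D}(\overline{\Omega})$ in $\Xscr_k$. This refines the density of $\mathcal{D}(\overline{\Omega})$ in $E(\Delta;L^{2}(\Omega))$ of \cite[Lemma~1.5.3.9]{PG:EPNSD}, already used in the proof of Proposition~\ref{NeumannTraceOnMaximalDomainProp}, by simultaneously controlling the Neumann trace on $\cup_{j \leq k} \Gamma_j$ in $L^{2}$. I would prove it by a partition-of-unity argument: write $u = \chi_0 u + \sum_{j=1}^{k} \chi_j u$ where each $\chi_j \in C^\infty(\overline{\Omega})$ is supported in a chart that straightens $\Gamma_j$ and is disjoint from $\overline{\Gamma}_i$ for $i \neq j$, and $\chi_0 \in C^\infty(\overline{\Omega})$ vanishes in a neighbourhood of $\cup_{j \leq k} \Gamma_j$. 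For $\chi_0 u$, no $L^{2}$ boundary condition enters and \cite[Lemma~1.5.3.9]{PG:EPNSD} applies directly; for each $\chi_j u$, the chart reduces matters to approximating a function in $H^{1}(\R^n_+)$ satisfying $\Delta u \in L^{2}(\R^n_+)$ and $\partial u/\partial x_n \rst{\{x_n = 0\}} \in L^{2}(\{x_n = 0\})$ by smooth functions, which I would carry out by inward translation $u(\cdot + \varepsilon \mathbf{e}_n)$ followed by Friedrichs mollification at scale much smaller than $\varepsilon$; convergence in the $\Xscr_k$-norm follows from continuity of translation in $L^{2}$ for each of the three controlled quantities.

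Granted the density, pick $u_n \in \mathcal{D}(\overline{\Omega})$ with $u_n \to u$ in $\Xscr_k$. For each $u_n$, the classical Green's identity
\begin{equation*}
  \int_\Omega (\Delta u_n) v \dif V + \int_\Omega \nabla u_n \cdot \nabla v \dif V = \sum_{j} \int_{\Gamma_j} \frac{\partial u_n}{\partial \bnu_j} \gamma_j v \dif A
\end{equation*}
holds by the divergence theorem applied to $v \nabla u_n \in H^{1}(\Omega;\R^n)$ on the bounded Lipschitz domain $\Omega$; the boundary integral splits into a sum over the smooth pieces because the interfaces $\overline{\Gamma}_j \cap \overline{\Gamma}_k$ for $j \neq k$ are $(n-2)$-dimensional by standing assumption~\eqref{StandingAss3} and hence of vanishing $(n-1)$-dimensional Hausdorff measure.

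Passage to the limit $n \to \infty$ finishes the argument. The volume integrals converge by $L^{2}(\Omega)$-convergence of $\Delta u_n$ and $\nabla u_n$. For $j \leq k$, the $L^{2}(\Gamma_j)$-convergence of $\gamma_j \partial u_n/\partial \bnu_j$ together with $\gamma_j v \in H^{1/2}(\Gamma_j) \subset L^{2}(\Gamma_j)$ yields, by Cauchy--Schwarz, the first sum on the right-hand side of \eqref{GreenIdentity}. For $j > k$, the embedding $\Xscr_k \hookrightarrow E(\Delta;L^{2}(\Omega))$ combined with Proposition~\ref{NeumannTraceOnMaximalDomainProp}\eqref{NeumannTraceOnMaximalDomainPropClaim1} gives $\gamma_j \partial u_n/\partial \bnu_j \to \gamma_j \partial u/\partial \bnu_j$ in $[\tilde H^{1/2}(\Gamma_j)]^d$; the $L^{2}$-integrals on the pre-limit right-hand side coincide with the $[\tilde H^{1/2}(\Gamma_j)]^d$--$\tilde H^{1/2}(\Gamma_j)$ duality pairings since $\gamma_j v \in \tilde H^{1/2}(\Gamma_j)$ by hypothesis, and therefore pass to the limit by continuity of the pairing, producing the second sum in \eqref{GreenIdentity}.
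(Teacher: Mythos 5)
Your reduction to smooth $u_n$ and the subsequent limit passage are unobjectionable, but the whole proposal rests on the density of $\mathcal D(\overline{\Omega})$ in your space $\Xscr_k$, and the argument you sketch for that density fails at exactly the point where the theorem is delicate: the interfaces. The partition of unity you describe cannot exist. Suppose some controlled piece $\Gamma_j$ ($j\leq k$) has a nonempty interface $\overline{\Gamma}_j\cap\overline{\Gamma}_i$ with another piece, which is precisely the geometric situation of standing assumptions \eqref{StandingAss2}--\eqref{StandingAss3} and of the application in Section~\ref{WaveSection} (e.g.\ $\Gamma(0)$ meets $\Gamma$). On $\Gamma_j$ every $\chi_l$ with $l\neq j$ vanishes, because its support avoids $\overline{\Gamma}_j$, and $\chi_0$ vanishes there as well; hence $\chi_j\equiv 1$ on $\Gamma_j$. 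But $\chi_j$ is continuous on $\overline{\Omega}$ and must vanish in a neighbourhood of any point $p\in\overline{\Gamma}_j\cap\overline{\Gamma}_i\subset\overline{\Gamma}_i$, while points of $\Gamma_j$ accumulate at $p$ --- a contradiction. Consequently your localisation never covers a neighbourhood of the edges $\overline{\Gamma}_j\cap\overline{\Gamma}_i$, and the half-space model (inward translation plus mollification) never addresses the only genuinely problematic region, namely the behaviour of $\gamma_j\frac{\partial u}{\partial\bnu_j}$ near the edge. Whether $\mathcal D(\overline{\Omega})$ is dense in $\Xscr_k$ for such piecewise-smooth Lipschitz domains is a subtle question --- it is essentially the difficulty that the capacity Lemma~\ref{lem:zerocap} and the $\tilde H^{1/2}(\Gamma_j)$ machinery are designed to bypass --- and it is neither quoted from the literature nor established by your sketch, so the proof as proposed has a genuine gap.

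Note also that the paper's own route avoids approximating $u$ altogether, and you could repair your argument by switching to it: keep $u$ fixed in $E(\Delta;L^2(\Omega))$, apply claim \eqref{NeumannTraceOnMaximalDomainPropClaim2} of Proposition~\ref{NeumannTraceOnMaximalDomainProp} (valid whenever $\gamma_j v\in\tilde H^{1/2}(\Gamma_j)$ for \emph{all} $j$), observe that for $j\leq k$ the duality pairing is an ordinary $L^2(\Gamma_j)$ inner product because $\gamma_j\frac{\partial u}{\partial\bnu_j}\in L^2(\Gamma_j)$, and then remove the superfluous hypothesis on $\gamma_j v$ for $j\leq k$ by approximating $v$ in $H^1(\Omega)$ by elements of $\mathcal D_E(\overline{\Omega})$ (Lemma~\ref{lem:zerocap}), the $j\leq k$ boundary terms passing to the limit by the continuity of the Dirichlet trace into $L^2(\Gamma_j)$. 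In other words, the one-sided regularity hypothesis $\frac{\partial u}{\partial\bnu}\in L^2(\cup_{j\leq k}\Gamma_j)$ is exploited through the pairing on the $v$-side, not through a smooth approximation of $u$; the latter would require a density theorem up to the edges that is strictly stronger than anything the paper uses.
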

\noindent For $n = 2$, this is a generalisation of \cite[Theorem
1.5.3.11]{PG:EPNSD}.  See also \cite[discussion on p. 62]{PG:EPNSD}
for domains with $C^{1,1}$-boundaries.  The assumption $\frac{\partial
  u}{\partial\bnu}\in L^2(\cup_{j=1}^k\Gamma_j)$ simply means that
$\gamma_j \frac{\partial u}{\partial\bnu_j}\in L^2(\Gamma_j)$ for all
$j=1,2,\ldots,k$ where $\gamma_j \frac{\partial u}{\partial\bnu_j}$ is
understood as an element of $[\tilde H^{1/2}(\Gamma_j)]^d$ which space
includes $L^2(\Gamma_j)$; see Proposition
\ref{NeumannTraceOnMaximalDomainProp}. 
\begin{proof}
  As explained above, we have $\gamma_j v, \gamma_j \frac{\partial
    u}{\partial\bnu_j}\in L^2(\Gamma_j)$ for all $j=1,\ldots,k$.
Then
\eqref{GreenIdentity} follows from claim
\eqref{NeumannTraceOnMaximalDomainPropClaim2} of Proposition
\ref{NeumannTraceOnMaximalDomainProp} under the additional assumption
that $\gamma_j v \in \tilde H^{1/2}(\Gamma_j)$ for all $j$. The
functions in $\mathcal D_E(\overline{\Omega})$ clearly satisfy this
additional assumption, and they are dense in $H^1(\Omega)$.  This
proves the claim.
\end{proof}
An alternative to the above piecewise construction is to start with
the global Neumann trace $\gamma \frac{\partial }{\partial \nu} u$
defined for $u \in E(\Delta;L^2(\Omega))$ with values in
$H^{-1/2}(\partial \Omega)$, see, e.g.,
\cite[Theorem~13.6.9]{T-W:OCOS}. The global Neumann trace $\gamma
\frac{\partial }{\partial \nu} u$ can be restricted to the spaces
$\tilde H^{1/2}(\Gamma_j) $, and claim
\eqref{NeumannTraceOnMaximalDomainPropClaim2} of Proposition
\ref{NeumannTraceOnMaximalDomainProp} follows from a global Green's
identity in a general Lipschitz domain. However, one still needs Lemma
\ref{lem:zerocap} to prove Theorem \ref{GreensIdentityThm}.

It remains to prove the Poincar\'e inequality that is used to show
that the expression \eqref{EnergyNormEq} is a valid Hilbert space norm
for the state space. Let $\Gamma_j$ be one of the boundary components
of $\partial \Omega$ as described above. By the standing assumptions
\eqref{StandingAss1} and \eqref{StandingAss2} given in the beginning
of this appendix, the set $\Gamma_j$ has a finite, positive area $A_j
= \int_{\Gamma_j}{ \dif A}$.  Thus, we can
define the mean value operator $M_j:H^1(\Omega)\to \C$ on $\Gamma_j$ by
\begin{equation*}
  M_j u=\frac{1}{A_j}\int_{\Gamma_j} \gamma_j u \dif A,
\end{equation*}
It is clear that $M_j$ is a bounded linear functional on
$H^1(\Omega)$, and we may regard it as an element of
$\BLO(H^1(\Omega))$ safistying $M_j^2 = M_j$ by considering $M_j u$ as
a constant function on $\Omega$.
\begin{theorem}[Poincar\'e inequality]
\label{PoincareInEqThm}
Let the domain $\Omega \subset \R^n$ satisfy the standing assumptions
\eqref{StandingAss1} -- \eqref{StandingAss4} above, and let $\Gamma_j$
be one of the boundary components of $\partial \Omega$.  There is a
constant $C < \infty$ such that 
\begin{equation} \label{PoincareInEqThmEq1}
  \norm{u-M_j u}_{L^2(\Omega)}\leq C \norm{\nabla u}_{L^2(\Omega)}
\end{equation}
for all $u\in H^1(\Omega)$.  Thus, we have $\norm{u}_{L^2(\Omega)}\leq
C \norm{\nabla u}_{L^2(\Omega)}$ for $u \in H^1(\Omega) \cap
\Null{\gamma_j}$.
\end{theorem}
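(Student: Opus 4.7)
The plan is to prove \eqref{PoincareInEqThmEq1} by the standard compactness-contradiction argument, after which the final claim follows trivially: if $u \in \Null{\gamma_j}$ then $M_j u = 0$ so $\norm{u}_{L^2(\Omega)} = \norm{u - M_j u}_{L^2(\Omega)} \leq C \norm{\nabla u}_{L^2(\Omega)}$. So the whole task is to establish the mean-value Poincar\'e inequality for the connected bounded Lipschitz domain $\Omega$ with respect to the boundary functional $M_j$.

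First I would argue by contradiction. Assume \eqref{PoincareInEqThmEq1} fails for every constant, so that there exists a sequence $\{u_n\} \subset H^1(\Omega)$ with $\norm{u_n - M_j u_n}_{L^2(\Omega)} > n \norm{\nabla u_n}_{L^2(\Omega)}$ for all $n$. Normalising, set $v_n := (u_n - M_j u_n)/\norm{u_n - M_j u_n}_{L^2(\Omega)}$. Using $M_j^2 = M_j$ (which holds because $M_j u$ is viewed as a constant function, whose restriction on $\Gamma_j$ averages back to itself) gives $M_j v_n = 0$. By construction $\norm{v_n}_{L^2(\Omega)} = 1$ and $\norm{\nabla v_n}_{L^2(\Omega)} < 1/n$, so $\{v_n\}$ is bounded in $H^1(\Omega)$.

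Next I would invoke the Rellich--Kondrachov compactness theorem, which applies to bounded Lipschitz domains and in particular to $\Omega$ by the standing assumption \eqref{StandingAss1}: the inclusion $H^1(\Omega) \hookrightarrow L^2(\Omega)$ is compact, so (passing to a subsequence) $v_n \to v$ in $L^2(\Omega)$ for some $v$. Since $\nabla v_n \to 0$ in $L^2(\Omega)$, the sequence is Cauchy in $H^1(\Omega)$ and thus $v_n \to v$ in $H^1(\Omega)$ with $\nabla v = 0$. Because $\Omega$ is connected (it is a domain), $v$ is a constant function, and $\norm{v}_{L^2(\Omega)} = 1$ forces $v$ to be a nonzero constant.

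Now I would use continuity of the functional $M_j$ to derive the contradiction: $M_j \in \BLO(H^1(\Omega))$ by the boundedness of the Dirichlet trace $\gamma_j: H^1(\Omega) \to L^2(\Gamma_j)$ combined with $A_j < \infty$, so $M_j v = \lim_{n \to \infty} M_j v_n = 0$. But for a nonzero constant function $v$, $M_j v = v \neq 0$, which is the desired contradiction. The main obstacle is simply ensuring the applicability of Rellich--Kondrachov on the non-smooth domain $\Omega$, which is guaranteed by the Lipschitz hypothesis in \eqref{StandingAss1}; the remaining ingredient that $\Omega$ is connected is built into the term ``domain.''
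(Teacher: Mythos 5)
Your proposal is correct and follows essentially the same route as the paper's own proof: the same contradiction argument with the normalised sequence $v_k := (u_k - M_j u_k)/\norm{u_k - M_j u_k}_{L^2(\Omega)}$, the identity $M_j^2 = M_j$, the Rellich--Kondrachov compact embedding $H^1(\Omega) \subset L^2(\Omega)$, and the conclusion that the limit is a constant with zero mean on $\Gamma_j$, hence zero, contradicting the unit $L^2$-norm. Your explicit remarks on connectedness of $\Omega$ and on the continuity of $M_j$ are only minor elaborations of steps the paper leaves implicit.
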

\begin{proof} The argument is a standard argument by contradiction
  using the Rellich--Kondrachov compactness theorem, see
  e.g. \cite[Theorem 1, p. 144]{E-G:MTFPF}).  For a contradiction
  against \eqref{PoincareInEqThmEq1}, assume that there exist
  functions $u_k\in H^1(\Omega)$ such that there is the strict
  inequality 
  \begin{equation*}
    \norm{u_k-M_j u_k}_{L^2(\Omega)}> k\norm{\nabla
    u_k}_{L^2(\Omega)}\quad\text{for}\quad k=1,2,\ldots.
  \end{equation*}
  None of the functions $u_k$ are constant functions since for such
  functions \eqref{PoincareInEqThmEq1} holds for any $C \geq 0$.  So,
  we can define the functions
\begin{equation*}
  v_k :=\frac{u_k-M_j u_k}{\norm{u_k-M_j u_k}_{L^2(\Omega)}}
\end{equation*}
satisfying for all $k$ the normalisation $\norm{v_k}_{L^2(\Omega)}= 1$
and also $M_j v_k = 0$ by using $M_j^2 = M_j$. Since
\begin{equation*}
  \norm{\nabla v_k}^2=\frac{\norm{\nabla u_k}^2_{L^2(\Omega)}}{\norm{u_k-M_ju_k}^2_{L^2(\Omega)}}<\frac{1}{k^2}
\end{equation*}
by the counter assumption, we get
\begin{equation*}
    \norm{v_k}_{H^1(\Omega)}^2 = \norm{v_k}_{L^2(\Omega)}^2 +
    \norm{\nabla v_k}_{L^2(\Omega)}^2  \leq  1 + \frac{1}{k^2} \leq 2.
\end{equation*}
Since the embedding $H^1(\Omega) \subset L^2(\Omega)$ is compact (by
the boundedness of $\Omega$ and the Rellich--Kondrachov compactness
theorem, see e.g. \cite[Theorem 1, p. 144]{E-G:MTFPF}), we have a
function $v$ such that $v_k \to v$ in $L^2(\Omega)$ by possibly
replacing $\{ v_k \}$ by its subsequence. Moreover,
$\norm{v}_{L^2(\Omega)}=1$ since $\norm{v_k}_{L^2(\Omega)}=1$ for all
$k$.

Since $\norm{ \nabla v_k}_{L^2(\Omega)}\leq 1/k$, we see that $v_k\to
v$ in $H^1(\Omega)$ and hence $\nabla v=0$. Thus $v$ is a constant
function. Because $M_j v = \lim_{k \to \infty}{M_j v_k} = 0$, we
conclude that $v = 0$ which contradicts the fact that
$\norm{v}_{L^2(\Omega)}=1$. This proves \eqref{PoincareInEqThmEq1},
and the Poincar\'e equality follows trivially from this.
\end{proof}
\end{document}